\DeclareMathOperator\diag{diag}
\title{Transitions between root subsets associated with Carter diagrams}
\author{Rafael Stekolshchik}
\abstract{%
    For any two root subsets associated with two Carter diagrams that have the same $ADE$ type and the same size, we construct the transition matrix that maps one subset to the other.  The transition between these two subsets is carried out in some canonical way affecting exactly one root, so that this root is mapped to the minimal element in some root subsystem.  The constructed  transitions are involutions.  It is shown that all root subsets associated with the given Carter diagram are conjugate under the action of the Weyl group.  A numerical relationship is observed between enhanced Dynkin diagrams $\Delta(E_6)$, $\Delta(E_7)$ and $\Delta(E_8)$ (introduced by Dynkin-Minchenko) and Carter diagrams.  This relationship echoes the $2-4-8$ assertions obtained by Ringel, Rosenfeld and Baez in completely different contexts regarding the Dynkin diagrams $E_6$, $E_7$, $E_8$.
}
\keywords{Root system, Carted diagram, enhanced Dynkin diagram}
\begin{document}

\begin{flushright}\textit{In memory of Semyon E. Konstein}\end{flushright}

\section{Introduction}

\subsection{Diagrams with cycles}

In $1972$, R. Carter introduced the so-called admissible diagrams representing semi-Coxeter elements
of conjugacy classes in the Weil group. The admissible  diagrams also represent root subsets\footnotemark[1]
of the root systems associated with the Weyl group. These root subsets sometimes form strange cycles,
strange because the extended Dynkin diagram $\widetilde{A}_l$ cannot be part of any admissible diagram.
The explanation for this fact was that in the case of extended Dynkin diagrams,
the inner products of roots of cycle $\widetilde{A}_l$ are negative, while in the case of admissible diagrams,
there are necessarily both positive and negative inner products, see \cite[Lemma A.1]{18}.
Thus was born the concept of the Carter diagram, see \cite{18}.
They differ from admissible diagrams
in that they take into account the sign of the inner product on the pair of roots;
for this, the language of \textit{solid and dotted} edges is used, see Section \ref{sec_solid_dotted}.
\footnotetext[1]{In the following, the phrase ``root subset'' always means the root subset of linearly independent roots.
}

Carter diagrams distinguish between negative inner products (solid edges) and positive
inner products (dotted edges). The number of solid edges must necessarily be odd, as well as the number
of dotted edges. This agrees well with the fact that, by definition,
the length of an admissible diagram is even. In the theorem on exclusion of long cycles
\cite[Theorem~3.1]{18} it was shown that any Carter diagram with
cycles of arbitrary even length can be reduced to another Carter diagram
containing only $4$-cycles. In proving this theorem, for each particular case,
it is checked that a semi-Coxeter element associated with a Carter diagram with long cycles
is conjugate to a semi-Coxeter element associated with another Carter diagram
containing only cycles of length $4$.

\subsection{Homogeneous classes of Carter diagrams}
  \label{sec_coval_list}
  The Dynkin diagram $A_l$, where $l \ge 1$ (resp. $D_l$, where $l \ge 4$; resp. $E_l$, where $l = 6,7,8$) is said to be the
  \textit{Dynkin diagram of $A$-type}
  (resp. \textit{$D$-type}, resp. \textit{$E$-type}). The Carter diagram $A_l$, where $l \ge 1$ (resp. $D_l$, $D_l(a_k)$,
  where $l \ge 4$, $1 \leq k \leq \big [ \tfrac{l-2}{2} \big ]$; resp. $E_l$, $E_l(a_k)$, where $l = 6,7,8$, $k = 1,2,3,4$)
  is said to be the \textit{Carter diagram of $A$-type} (resp. \textit{$D$-type}, resp. \textit{$E$-type}).

 The Carter diagrams of the same type and the same index constitute a \textit{homogeneous class} of Carter diagrams.
 Denote by $C(\Gamma)$ the homogeneous class containing the Carter diagram $\Gamma$,
 see \eqref{eq_one_type} and Fig. \ref{fig_all_diagr_ED}.
\begin{equation}
 \label{eq_one_type}
  \begin{split}
     C(E_6) = & \{ E_6, E_6(a_1), E_6(a_2)\}, \\
     C(E_7) = & \{ E_7, E_7(a_1), E_7(a_2), E_7(a_3), E_7(a_4)\}, \\
     C(E_8) = & \{ E_8, E_8(a_1), E_8(a_2), E_8(a_3), E_7(a_4), E_8(a_5), E_8(a_6), E_8(a_7), E_7(a_8)\}, \\
     C(D_l) = & \left\{ D_l, D_l(a_1), D_l(a_2), \dots, D_l\left(a_{\left[ \frac{l-2}{2} \right]}\right) \right\}, \text{ where } l \geq 4.
  \end{split}
\end{equation}

\begin{figure}
    \centering
    \includegraphics[height=0.9\textheight]{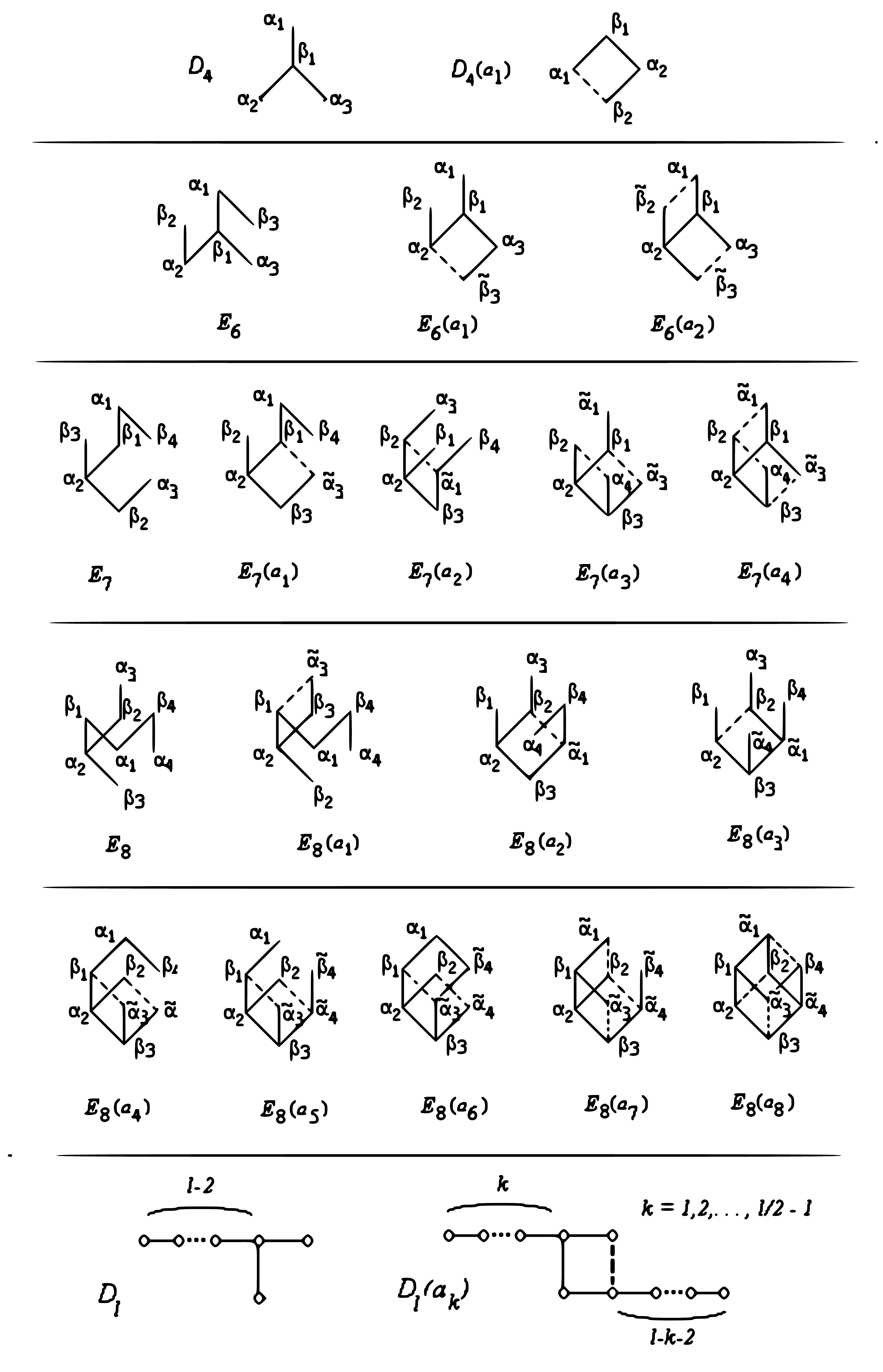}
    \caption{Carter diagrams of $D$ and $E$ types}
    \label{fig_all_diagr_ED}
\end{figure}

  A root subset associated with some diagram $\widetilde\Gamma$ is denoted by a $\widetilde\Gamma$-set.
Let $\widetilde{S}$ (resp. $S$) be a $\widetilde\Gamma$-set (resp. $\Gamma$-set).
In Tables~\ref{table:E6.MI2}-\ref{tab_part_root_syst_4} of Section~\ref{sect_proof_th},
the \textit{transition matrix} $M_I : \widetilde{S} \longmapsto S$ is constructed for the following
homogeneous pairs  $\{ \widetilde\Gamma, \Gamma \}$:
  \begin{equation}
    \label{eq_pairs_trans}
     \begin{array}{llll}
          (1) & \quad \{D_4(a_1), D_4\} & \qquad\qquad (9) & \quad \{E_8(a_1), E_8\}\\
          (2) & \quad \{D_l(a_k), D_l\} & \qquad\qquad (10) & \quad \{E_8(a_2), E_8\} \\
          (3) & \quad \{E_6(a_1), E_6\} & \qquad\qquad  (11) & \quad \{E_8(a_3), E_8(a_2)\} \\
          (4) & \quad \{E_6(a_2), E_6(a_1)\} & \qquad\qquad  (12) & \quad \{E_8(a_4), E_8(a_1)\}\\
          (5) & \quad \{E_7(a_1), E_7\} & \qquad\qquad  (13) & \quad \{E_8(a_5), E_8(a_4)\} \\
          (6) & \quad \{E_7(a_2), E_7\} & \qquad\qquad  (14) & \quad \{E_8(a_6), E_8(a_4)\} \\
          (7) & \quad \{E_7(a_3), E_7(a_1)\} & \qquad\qquad  (15) & \quad \{E_8(a_7), E_8(a_5)\}\\
          (8) & \quad \{E_7(a_4), E_7(a_3)\} & \qquad\qquad (16) & \quad \{E_8(a_8), E_8(a_7)\} \\
     \end{array}
  \end{equation}
  The diagrams obtained as images
  of the mapping $M_I$ are considered up to equivalence of Carter diagrams,
  see \cite[Section 1.3]{18}. The list \eqref{eq_pairs_trans} is called the \textit{adjacency list}.
  The adjacency list is not a complete list, but a minimal list that chains all Carter diagrams of
  to the same homogeneous class using the transition matrices
  constructed in Theorem \ref{th_invol}.

\subsection{A group of transitions $\mathscr{M}$}

Let $C(\Gamma)$ be a homogeneous class of Carter diagrams
out of \eqref{eq_one_type}, and $\widehat{S}$ be the set of all root subsets associated with diagrams of $C(\Gamma)$.
Let $\mathscr{M}$ be some subgroup of the group generated by transitions of type $M_I$
operating on $\widehat{S}$. The Weyl group $W$ operates simply transitively
on the set of bases of the corresponding root system, all bases are associated with
a single Dynkin diagram (which is also the Carter diagram).
This is not true for the group $\mathscr{M}$: any element $M_t \in \mathscr{M}$ (of type $M_I$)
can map some root subset $S_1 \in \widehat{S}$
to another root subset $S_2 \in \widehat{S}$.
The root subsets $S_1$ and $S_2$ are associated,
generally speaking, with different Carter diagrams $\Gamma_1 \in C(\Gamma)$
and $\Gamma_2 \in C(\Gamma)$. We will say that $S_1$ (resp. $S_2$) is a $\Gamma_1$-set
(resp. $\Gamma_2$-set).

\subsection{Theorems on transition matrix}

In this paper, we show that each homogeneous class of Carter diagrams essentially depicts
the same subset of the root system given in different bases. There are several chains of diagrams
containing homogeneous classes of Carter diagrams. The transition between neighboring bases
in any chain can be performed in some \textit{canonical way},
\textit{affecting exactly one root},
which is mapped to the minimum element in some root subsystem.
The transition matrix connecting two adjacent diagrams is an involution.

The main result of this paper can be formulated as follows:
Let $\{\widetilde\Gamma, \Gamma \}$ be a pair out of the adjacency list \eqref{eq_pairs_trans},
and let $\widetilde{S}$ (resp. $S$) be a $\widetilde\Gamma$-set (resp. $\Gamma$-set).
We construct the matrix $M_I$ having the following properties
({Theorem \ref{th_invol}}, {Theorem \ref{th_conjugate}}):
\begin{enumerate}[(a)]
\item
  The matrix $M_I$ is the \underline{transition matrix} transforming the roots of $\widetilde{S}$
  to the roots of~$S$.

\item
  The matrix $M_I$ transforms only one element in $\widetilde\alpha \in \widetilde{S}$,
  the remaining elements of $\widetilde{S}$ are left fixed. The element
  $\widetilde\alpha$ is transformed
  into the \underline{minimal element} of some Dynkin subset $S(\widetilde\alpha) \subset \widetilde{S}$,

\item
  The matrix $M_I$ acts as \underline{involution} on $\widetilde{S}$:
\begin{equation}
  \label{th_invol_3}
  M_I\widetilde\alpha = \alpha = -\widetilde\alpha + \sum\limits_{\tau_i \in \widetilde{S}}t_i\tau_i,  \quad
  M_I\tau_i = \tau_i \text{ for } \tau_i \neq \widetilde\alpha,
\end{equation}
  and $M_I$ acts also as \underline{involution} on $S$:
\begin{equation}
  \label{th_invol_4}
  M_I\alpha = \widetilde\alpha = -\alpha + \sum\limits_{\tau_i \in S}t_i\tau_i, \quad
  M_I\tau_i = \tau_i \text{ for } \tau_i \neq \alpha,
\end{equation}
  The values of $t_i$ in \eqref{th_invol_3} and \eqref{th_invol_4}
   are given in Tables~\ref{table:E6.MI2}-\ref{tab_part_root_syst_4} of Section~\ref{sect_proof_th}.

\item
 In most cases of Section~\ref{sect_proof_th},
  the mapping $M_I$ given in \eqref{th_invol_3} \underline{eliminates} one circle (or one endpoint),
  the mapping $M_I$ given in \eqref{th_invol_4} \underline{builds} one circle (or one endpoint).
  In case~(16) of Section~\ref{sect_proof_th},  $M_I$ \underline{eliminates} $3$ cycles.
\end{enumerate}

Only $ADE$ root systems are considered.

\subsection{Relationship with other diagrams containing cycles}

\subsubsection{Dynkin-Minchenko diagrams: Procedure of completion} \label{sec_DM_compl}

Let $\Gamma$ be a Dynkin diagram of the complex semisimple
Lie algebra $\mathfrak{g}$. Subdiagrams of $\Gamma$ are Dynkin diagrams of regular
subalgebras of $\mathfrak{g}$.
However, not all regular subalgebras can be obtained in this way.
Besides, non-conjugate regular subalgebras can have identical Dynkin diagrams.
Both problems are efficiently solved by using enhanced Dynkin diagrams.
In \cite{8}, Dynkin and Minchenko introduced a \underline{canonical enlargement}
of the basis called the \textit{enhanced basis} and \textit{enhanced Dynkin diagrams} representing an enhanced basis.
They constructed an enhancement of $\Gamma$ by a recursive procedure which they call the \textit{completion}:
At each step of the procedure, find a $D_4$-subset in the already introduced nodes,
add the maximal\footnotemark[2] root of this subset, and connect it by edges to the corresponding part
of the already introduced nodes.
For the enhanced Dynkin diagrams $\Delta(E_6)$ and $\Delta(E_7)$, see Section \ref{sect_A}.
\footnotetext[2]{Adding a minimal root (instead of a maximal one) leads to a
topologically isomorphic enhanced Dynkin diagram,
but distinguished by solid and dotted edges.}

   In \cite{19},\cite{20} Vavilov and Migrin combined both types of considered diagrams: \textit{Carter diagrams} and \textit{enhanced Dynkin diagrams}, they applied the language of solid and dotted edges to enhanced Dynkin diagrams.
 The obtained diagrams are called \textit{signed enhanced Dynkin diagrams}.
 They showed that any Carter diagram of the homogeneous class containing
 the Dynkin diagrams $E_6$, $E_7$, $E_8$ can be embedded into
 the signed enhanced Dynkin diagram $\Delta(\Gamma)$ associated with $\Gamma$
 such that the ``solid and dotted'' correspondence is preserved.

\subsubsection{Values 2,4,8 and diagrams $E_6$, $E_7$, $E_8$}

To the Vavilov-Margin observation mentioned above, I would like to add the following easily verifiable fact:
\begin{remark}
  \label{obs_corresp_DM}
   The number of extra nodes obtained by the Dynkin-Minchenko completion procedure
   for a simply-laced Dynkin diagram coincides with the number of Carter diagrams
   (with cycles) of the same type, see Fig.~\ref{fig_all_diagr_ED}, Table~\ref{tab_extra nodes}.
\end{remark}

\begin{table}
  \begin{center}
  \caption{Cardinality of extra nodes}
  \label{tab_extra nodes}
  \renewcommand{\arraystretch}{1.3}
  \begin{tabular} {|c|c|c|c|c|}
\hline
  $D_{2m}$  & $ D_{2m+1}$ & $E_6$  & $E_7$  & $E_8$ \\
\hline
  $m-1$       & $m-1$         & $2$     &  $4$    &  $8$  \\
\hline
  \end{tabular}
  \end{center}
\end{table}

For a further discussion of the relationship between
enhanced Dynkin diagrams and Carter diagrams, see Section \ref{sec_conjectures}.

In \cite{13}, in the context of Auslander-Reiten quivers, Ringel observed a completely different relation
between values $2$,$4$,$8$ and diagrams $E_6$, $E_7$, $E_8$, see Section \ref{sec_Ringel}.

In \cite{1}, Baez (in relation to Rosenfeld's idea in \cite{14}) points to another connection
between values $2$,$4$,$8$ and diagrams $E_6$, $E_7$, $E_8$, see Section \ref{sec_Baez}.

\subsubsection{McKee-Smyth diagrams: Eigenvalues in $(-2,2)$}

Much to my surprise, I found a complete list of $8$-vertex
Carter diagrams with circles in the paper of McKee and Smyth  \cite[Figs. 12-14]{11}.
The  $\{0, 1\}$-matrices with zeros on the diagonal can be regarded as adjacency matrices of graphs.
Assume that the off-diagonal elements of such a matrix to be chosen from the set $\{-1, 0, 1\}$.
Then, we get so-called a \textit{signed} graph, a non-zero $(\alpha,\beta)$th entry denotes a sign of $-1$ or $1$
on the edge connecting vertices $\alpha$ and $\beta$.
The signed graphs exactly correspond to our diagrams with solid and dotted edges.
The  matrix with zeros on the diagonal is called an \textit{uncharged matrix}.
By \cite[Theorem 4]{11}, the signed graphs maximal with respect to having all their eigenvalues in $(-2,2)$
are exactly Carter diagrams $E_8(a_i), 1 \le i \le 8$ and $D_l(a_i), i < l/2 - 1$, see Fig. \ref{fig_all_diagr_ED}.
If the diagonal matrix $2I$ is added to such an uncharged matrix,
then exactly \textit{partial Cartan matrix} will be obtained, see Section \ref{sec_partial}.
Then, the eigenvalues of the partial Cartan matrices
should lie in the interval $(0,4)$. Using eigenvalues one can get an invariant description of Carter diagrams,
see \cite[Section 4.4]{16}. For some details on the relationship between Carter diagrams and eigenvalues of partial Cartan matrices, see Section \ref{sec_eigenv}.

Similar results to \cite{11} were also obtained by Mulas and Stanic in \cite{12}.

\section{Diagrams containing cycles}

\subsection{Admissible diagrams: Conjugacy classes of $W$} \label{eq_def_adm}

Let $\varPhi$ be the root system  corresponding to the Weyl group $W$.
Each element $w \in W$ can be expressed in the form
 \begin{equation}
   \label{any_roots_0}
    w  = s_{\alpha_1} s_{\alpha_2} \dots s_{\alpha_k}, \text{ where } \alpha_i \in \varPhi \text{ for all } i.
 \end{equation}

Carter proved that $k$ in the decomposition \eqref{any_roots_0} is the smallest if
and only if the subset of roots $\{\alpha_1, \alpha_2, \dots , \alpha_k\}$ is linearly independent;
such a decomposition is said to be \textit{reduced}. The admissible diagram
corresponding to the given element $w$ is not unique, since the reduced
decomposition of the element $w$ is not unique.

Denote by $l_C(w)$ the smallest value $k$
 corresponding to any reduced decomposition~\eqref{any_roots_0}.
 The corresponding set of roots $\{\alpha_1,\alpha_2,\dots,\alpha_k\}$ consists of linearly
 independent and \underline{not ne-}
 \underline{cessarily simple} roots, see Lemma \ref{lem_lin_indep}.
 If $l(w)$ is the smallest value
 $k$ in any expression like \eqref{any_roots_0} such that all roots $\alpha_i$ are \textit{simple},
 then $l_C(w) \leq l(w)$.

 \begin{lemma}[{\cite[Lemma 3]{6}}]
  \label{lem_lin_indep}
   Let $\alpha_1, \alpha_2, \dots, \alpha_k \in \varPhi$.
   Then, $s_{\alpha_1} s_{\alpha_2} \dots s_{\alpha_k}$ is reduced
   if and only if $\alpha_1, \alpha_2, \dots, \alpha_k$ are linearly
   independent. \qed
 \end{lemma}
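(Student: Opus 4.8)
The plan is to study the element $w = s_{\alpha_1}\cdots s_{\alpha_k}$ through the geometry of its action on the Euclidean space $V = \operatorname{span}(\varPhi)$, the decisive invariant being the \emph{moved space} $\operatorname{im}(w-\mathrm{id})$, whose dimension I write $r(w)$. Since $W$ acts orthogonally, $V = \ker(w-\mathrm{id})\oplus\operatorname{im}(w-\mathrm{id})$ is an orthogonal decomposition, so $r(w) = \operatorname{codim}\ker(w-\mathrm{id})$ is the codimension of the fixed space. The whole statement will follow from two facts: $r(w)$ is a lower bound for the number of reflections in \emph{any} expression of $w$, and, when the $\alpha_i$ are independent, $r(w)=k$.

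First I would prove the telescoping containment $\operatorname{im}(w-\mathrm{id})\subseteq U$, where $U:=\operatorname{span}(\alpha_1,\ldots,\alpha_k)$. Writing $w-\mathrm{id}=\sum_{i=1}^k s_{\alpha_1}\cdots s_{\alpha_{i-1}}(s_{\alpha_i}-\mathrm{id})$ and using $\operatorname{im}(s_{\alpha_i}-\mathrm{id})=\mathbb{R}\alpha_i$ together with the fact that each $s_{\alpha_j}$ preserves $U$ (as $\alpha_j\in U$), every summand has image inside $U$. Consequently $r(w)\le\dim U\le k$; applied to an arbitrary expression $w=s_{\beta_1}\cdots s_{\beta_m}$ the same argument gives $r(w)\le m$, so $r(w)$ bounds the length of every reflection expression of $w$, and in particular $r(w)\le l_C(w)$.

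The core is a \emph{peeling} lemma: if $\alpha_1,\ldots,\alpha_k$ are linearly independent, then $w$ fixes no nonzero vector of $U$. Given $v\in U$ with $wv=v$, I rewrite $s_{\alpha_1}v=s_{\alpha_2}\cdots s_{\alpha_k}v$; the left side differs from $v$ by a multiple of $\alpha_1$, while by the containment above the right side differs from $v$ by an element of $\operatorname{span}(\alpha_2,\ldots,\alpha_k)$. Independence forces $\mathbb{R}\alpha_1\cap\operatorname{span}(\alpha_2,\ldots,\alpha_k)=0$, so both sides equal $v$; in particular $(v,\alpha_1)=0$ and $s_{\alpha_2}\cdots s_{\alpha_k}v=v$. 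Iterating downward yields $(v,\alpha_i)=0$ for all $i$, whence $v\perp U$, and since $v\in U$ we get $v=0$. Thus $(w-\mathrm{id})|_U$ is injective, hence bijective, giving $\operatorname{im}(w-\mathrm{id})=U$ and $r(w)=k$. Combined with the lower bound, this settles the direction ``independent $\Rightarrow$ reduced'': every expression has length $\ge r(w)=k$, so the length-$k$ word is minimal.

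For the converse I would argue the contrapositive: if the $\alpha_i$ are dependent then $\dim U<k$, so the containment gives $r(w)\le\dim U<k$, and it remains to exhibit an expression of $w$ of length $<k$. This reduces to the general upper bound $l_C(w)\le r(w)$, which I would prove by induction on $r(w)$. For $w\ne\mathrm{id}$, Steinberg's fixed-point theorem identifies the pointwise stabilizer of $\ker(w-\mathrm{id})$ with the subgroup generated by the reflections $s_\gamma$ for roots $\gamma\in\operatorname{im}(w-\mathrm{id})$ and places $w$ inside it; in particular $\operatorname{im}(w-\mathrm{id})$ contains roots. Multiplying $w$ by such an $s_\gamma$ changes $r$ by exactly one, the parity being pinned down by $\det w=(-1)^{r(w)}$, valid for any orthogonal map. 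The main obstacle is then to choose $\gamma$ that \emph{lowers} $r$ rather than raising it; I expect to resolve this existence of a rank-lowering root by a counting argument inside the reflection subgroup generated by the roots in $\operatorname{im}(w-\mathrm{id})$, after which the inductive hypothesis writes $w=s_\gamma\,(s_\gamma w)$ as a product of $r(w)$ reflections. Together with $r(w)\le l_C(w)$ this gives $l_C(w)=r(w)$, so dependence forces $l_C(w)<k$ and the word is not reduced.
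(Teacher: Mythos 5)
The paper itself offers no proof of this lemma: it is quoted from Carter with a citation and a \verb|\qed|, so the only meaningful comparison is with Carter's original argument, which, like yours, ultimately rests on the identity $l_C(w)=\dim\operatorname{im}(w-\mathrm{id})$. Your first two steps are correct and complete. The telescoping identity $w-\mathrm{id}=\sum_i s_{\alpha_1}\cdots s_{\alpha_{i-1}}(s_{\alpha_i}-\mathrm{id})$ does give $\operatorname{im}(w-\mathrm{id})\subseteq U$, hence $r(w)\le m$ for every reflection expression of length $m$; and the peeling argument correctly shows $\ker(w-\mathrm{id})\cap U=0$ when the $\alpha_i$ are independent (positive definiteness of the form on $U$ is used at the last step), so $r(w)=k$ and the given word is reduced.

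The gap is in the converse, precisely where you flag it. You need a root $\gamma$ with $r(s_\gamma w)=r(w)-1$, and you propose to produce one ``by a counting argument inside the reflection subgroup,'' which you do not supply. No counting is needed, and the worry that $r$ might increase is illusory: \emph{any} root $\gamma\in\operatorname{im}(w-\mathrm{id})=\ker(w-\mathrm{id})^{\perp}$ works. Indeed, $s_\gamma$ fixes $\ker(w-\mathrm{id})$ pointwise, hence so does $s_\gamma w$, so $\ker(s_\gamma w-\mathrm{id})\supseteq\ker(w-\mathrm{id})$ and $r(s_\gamma w)\le r(w)$; your parity observation $\det u=(-1)^{r(u)}$ together with $|r(s_\gamma w)-r(w)|\le 1$ then forces $r(s_\gamma w)=r(w)-1$, and the induction closes. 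The only place Steinberg's theorem is genuinely needed is to guarantee that $\operatorname{im}(w-\mathrm{id})$ contains a root at all when $w\neq\mathrm{id}$; this is a heavier tool than Carter's own elementary induction on a minimal linear dependence among the $\alpha_i$, but it is legitimate. With that one sentence inserted in place of the promised counting argument, your proof is complete.
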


The Cartan matrix (resp. quadratic form) associated with $\varPhi$
is denoted by ${\bf B}$ (resp. $\mathscr{B}$).
The inner product induced by $\mathscr{B}$ is denoted by $(\cdot,\cdot)$.

Let us take the subset of linearly independent, but not necessarily simple roots $S \subset \varPhi$.
To the subset $S$ we associate some diagram $\Gamma$ that provides one-to-one correspondence
between roots of $S$ and nodes of $\Gamma$.
The diagram $\Gamma$ is said to be \textit{admissible} if the following two conditions hold:
\begin{enumerate}[(a)]
    \item \label{eq_def_adm(a)}
    The nodes of $\Gamma$ correspond to a set of linearly independent roots in $\varPhi$.

    \item \label{eq_def_adm(b)}
    If a subdiagram of $\Gamma$ is a cycle, then it contains an even number of nodes.
\end{enumerate}

Note that the admissible diagram may contain cycles, since the roots of $S$ are non necessarily simple,
see \cite[Section 1.2.1]{17}. Let us fix some basis of roots corresponding to the given
admissible diagram $\Gamma$:
\begin{equation*}
\label{basis_S}
  S = \{\alpha_1,\dots,\alpha_k,\beta_1,\dots,\beta_h\}.
\end{equation*}
The admissible diagram is bicolored, i.e., the set of nodes can be partitioned
into two disjoint subsets $S_\alpha = \{\alpha_1,\dots,\alpha_k \}$ and $S_\beta = \{\beta_1,\dots,\beta_h\}$,
where roots of $S_\alpha$ (resp. $S_\beta$) are  mutually orthogonal. The element
\begin{equation*}
\label{semi_Cox_elem_c}
   c = w_{\alpha}w_{\beta}, \quad \text{ where }
   w_{\alpha} = \prod\limits_{i=1}^k s_{\alpha_i}, \quad
   w_{\beta} = \prod\limits_{j=1}^h s_{\beta_j}
\end{equation*}
is called the \textit{semi-Coxeter element}; it represents the conjugacy class associated with
the admissible diagram $\Gamma$  and root subset $S$ (not necessarily root system).

\subsection{Carter diagrams: Language of ``solid and dotted'' edges}
  \label{sec_solid_dotted}

In \cite{18}, it was observed that the cycles in the admissible diagrams with necessity contains at least
one pair of roots  $\{\alpha_1, \beta_1\}$ with $(\alpha_1, \beta_1) > 0$ and at least one pair of roots
$\{\alpha_2, \beta_2\}$ with $(\alpha_2, \beta_2) < 0$, where $(\cdot, \cdot)$ is the Tits bilinear form
associated with the root system $\varPhi$. This observation motivated me to distinguish such pairs of roots:
Let us draw the \textit{dotted} (resp. \textit{solid}) edge $\{\alpha, \beta\}$
if $(\alpha, \beta) > 0$ (resp. $(\alpha, \beta) < 0)$.  The admissible diagram with dotted and solid
edges is said to be the \textit{Carter diagram.}
Up to dotted edges, the classification of Carter diagrams coincides with the classification of
admissible diagrams.

In the theorem on exclusion of long cycles \cite{18}, it was shown that any Carter diagram
with cycles of arbitrary even length can be reduced to diagrams with cycles of length $4$ only.
This explains why the admissible diagrams
$D_l(b_{\frac{1}{2}l-1})$, $E_7(b_2)$, $E_8(b_3)$, $E_8(b_5)$ listed
in \cite[Table 2]{6} do not appear in the lists of conjugacy classes.
The Carter diagrams with conjugate semi-Coxeter elements are said to
be \textit{equivalent}. The Carter diagrams (with cycles) representing non-Coxeter conjugacy classes are given in
Fig. \ref{fig_all_diagr_ED}. For another view of these diagrams, see \cite[Table 1]{18}.

\subsection{Carter diagrams: Eliminating the cycle.}

The semi-Coxeter elements generated by reflections $\{s_{\alpha_1}, s_{\alpha_2}, s_{\beta_1}, s_{\beta_2} \}$
constitute exactly two conjugacy classes with representatives $w_t$ and $w_o$, see Fig. \ref{D4_two_CCL}.
Semi-Coxeter elements $w_t$ and $w_o$ are distinguished by orders of reflections in the decomposition of $w_t$,
and $w_o$. Here, $t$ is the bicolored order $\{\alpha_1, \alpha_2, \beta_1, \beta_2 \}$, and
$o$ is the cyclic order $\{\alpha_1, \beta_1, \alpha_2, \beta_2 \}$, see \cite[Section 1.2]{18}.

\begin{figure}
    \centering
    \includegraphics[scale=0.3]{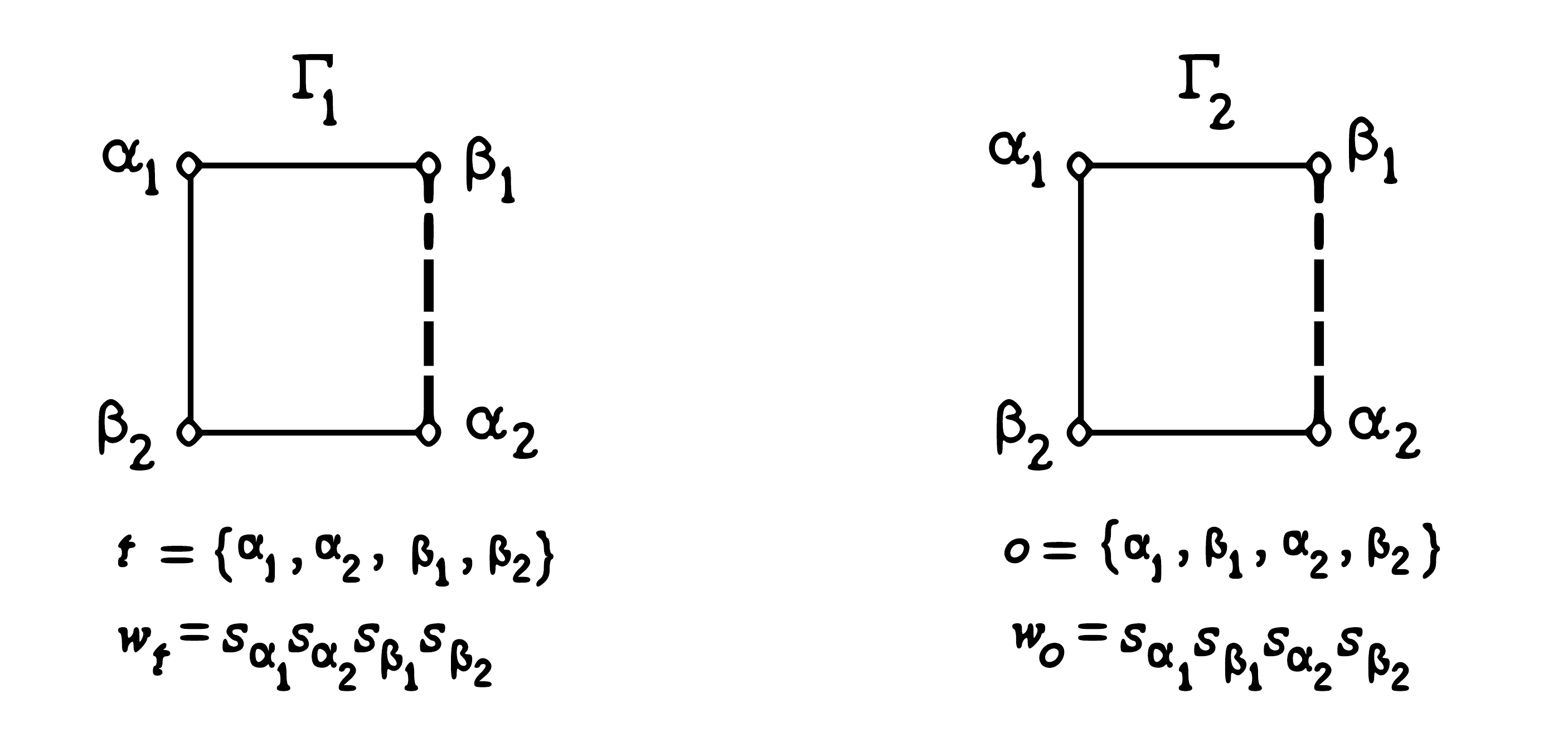}
    \caption{Diagram $\Gamma_1$ (resp. $\Gamma_2$) of type $D_4(a_1)$ (resp. equivalent to $D_4$)}
    \label{D4_two_CCL}
\end{figure}

The element $w_o$ is conjugate to the
$w_{\widetilde{o}} = s_{\alpha_1}s_{\alpha_2}s_{-(\alpha_1+\beta_1+ \beta_2)}s_{\beta_2}$
since
\begin{equation*}
\begin{split}
   w_o  = & s_{\alpha_1}s_{\beta_1}s_{\alpha_2} s_{\beta_2} = s_{\alpha_1+\beta_1}s_{\alpha_1}s_{\alpha_2}s_{\beta_2}
   \stackrel{s_{\alpha_1+\beta_1}}{\simeq}  s_{\alpha_1}s_{\alpha_2}s_{\beta_2}s_{\alpha_1+\beta_1}= \\
   & s_{\alpha_1}s_{\alpha_2}s_{\alpha_1+\beta_1+ \beta_2}s_{\beta_2} =
   s_{\alpha_1}s_{\alpha_2}s_{-(\alpha_1+\beta_1+ \beta_2)}s_{\beta_2} = w_{\widetilde{o}}.
 \end{split}
\end{equation*}
The elements $w_o$ and $w_{\widetilde{o}}$ are conjugate, the corresponding sets of roots are as follows:
\begin{equation*}
 S_1 = \{\alpha_1, \beta_1, \alpha_2, \beta_2 \}  \text{ and }
 S_2 = \{\alpha_1, \alpha_2, -(\alpha_1+\beta_1+ \beta_2), \beta_2 \}.
\end{equation*}
 There is a map $M: S_1 \longmapsto S_2$ acting as follows:
\begin{equation*}
  M\alpha_1 = \alpha_1, \quad M\alpha_2 = \alpha_2, \quad   M\beta_2 = \beta_2, \quad
  \widetilde\beta_1 = M\beta_1 = -(\alpha_1+\beta_1+ \beta_2).
\end{equation*}
Note that $M$ is an involution, $M : S_1 \to S_2$ and $M : S_2 \to S_1$, since
\begin{gather*}
    M^2\beta_1 = -(\alpha_1+M\beta_1+ \beta_2) = -(\alpha_1+ \beta_2) + (\alpha_1+\beta_1+ \beta_2) = \beta_1, \\
    M\beta_1 =  \widetilde\beta_1
    \quad \textup{ and } \quad
    M\widetilde\beta_1 = \beta_1.
\end{gather*}
Thus, $M$ transforms the root $\beta_1$ into the minimal element of the root subsystem $\{ \alpha_1, \beta_1, \beta_2 \}$\hspace*{-0.5pt}. In this paper, we will encounter a number of \textit{involution mappings} $M$ that map a certain element to the \textit{minimal element} of some root subsystem of $\varPhi$. So, we observe that there are two different orders of reflections:
\begin{itemize}
  \item The cyclic order of reflections $o$. Then, we get a $4$-cycle leading to the Coxeter class $D_4$
  of $W(D_4)$, see Fig. \ref{D4_equiv_diagr}.
  \item The bicolored order of reflections $t$. Then, we get an \textit{indestructible} $4$-cycle leading to the
  semi-Coxeter class $D_4(a_1)$.
\end{itemize}

\begin{figure}
    \centering
    \includegraphics[scale=0.3]{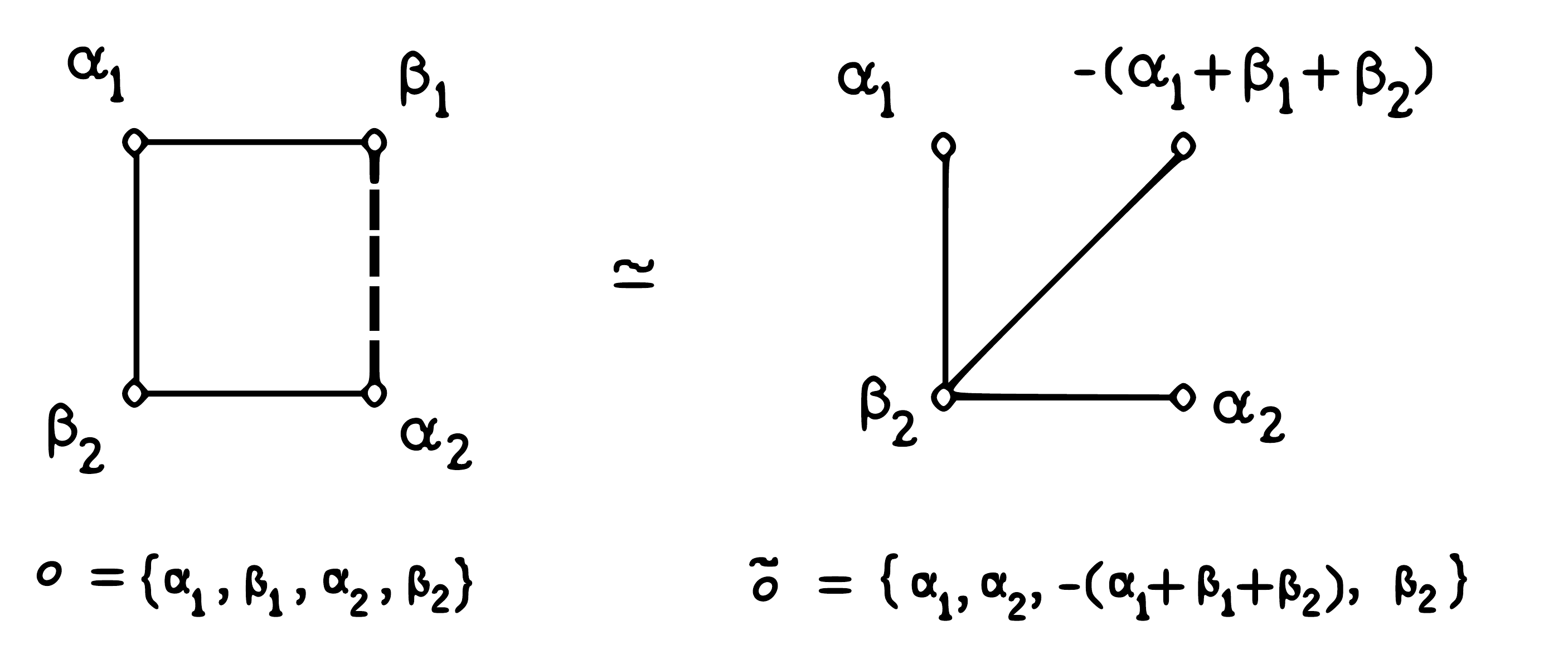}
    \caption{Conjugate elements $\{ w_o , w_{\widetilde{o}}\}$ corresponding to the Coxeter class $D_4$}
    \label{D4_equiv_diagr}
\end{figure}

\subsection{Connection diagrams}
  \label{sec_conn_diagr}
In \cite{18}, in addition to Carter diagrams, the so-called \textit{connection diagrams} were introduced.
Let $S$ be a set of linearly independent and not necessarily simple roots, $o$ be the order of reflections in the
decomposition \eqref{any_roots_0} of element $w$ associated with the set of roots $S$.
The connection diagram is a pair $(\Gamma, o)$, where $\Gamma$ corresponds to the set $S$.
In the connection diagram $\Gamma$, edges are also solid and dotted as in Carter diagrams.
The connection diagrams serve to transform one Carter diagram into another,
since in the process of transformation we can get non-Carter diagrams -- the evenness
of the cycles may be violated, see \cite[Section 1.2.2]{18}.

In \cite{18}, three equivalence transformations operating on
the connection diagrams and Carter diagrams were introduced: similarities, conjugations and s-permutations.
The Carter diagrams are studied there up to equivalence. In what follows, we only need \textit{similarity}.
Let $\alpha$ be a root in the $\Gamma$-set $S$.
The similarity transformation $L_{\alpha}$ reflects the root~$\alpha$:
\begin{equation}
   \label{eq_similar}
    L_{\alpha}: \alpha \longmapsto -\alpha.
\end{equation}
Two connection diagrams obtained from each other by a sequence of reflections \eqref{eq_similar},
are called \textit{similar} connection diagrams, see Fig. \ref{fig_simil}.

\begin{figure}
    \centering
    \includegraphics[scale=0.3]{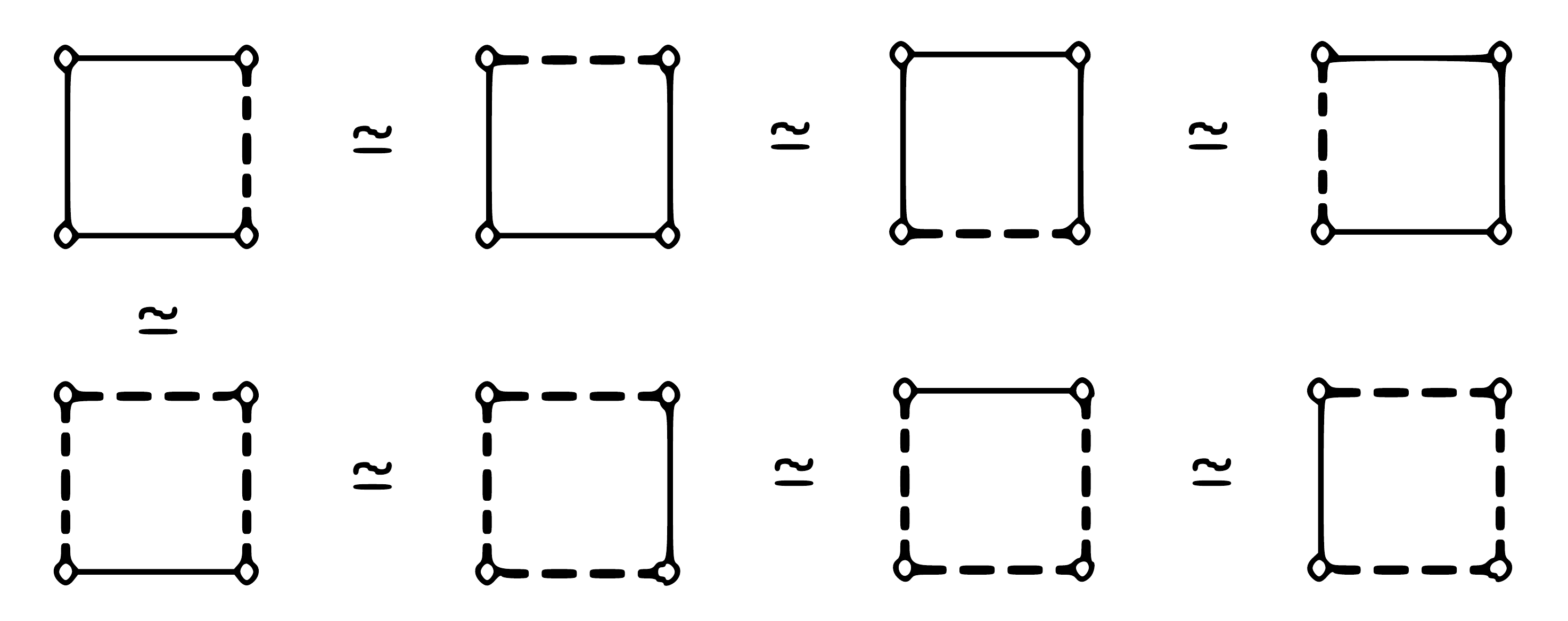}
    \caption{Eight similar $4$-cycles equivalent to $D_4(a_1)$)}
    \label{fig_simil}
\end{figure}

\subsection{Bicolored partition}
  Let $\Gamma$ be a Carter diagram and
\begin{equation}
   \label{eq_alpha_bet}
     S = \{\alpha_1, \alpha_2, \dots, \alpha_k, \beta_1, \beta_2, \dots, \beta_h \}
 \end{equation}
 be any $\Gamma$-set (of not necessarily simple roots),
 where roots of the set 
 \[
    S_{\alpha}
    := \{\alpha_i \mid i = 1,\dots,k\}
    \qquad (\textup{resp. }
    S_{\beta}
    := \{\beta_j \mid j = 1,\dots,h\})
\]
are mutually orthogonal.
 According to condition~\ref{eq_def_adm}\ref{eq_def_adm(a)}, there exists a certain set \eqref{eq_alpha_bet} of linearly independent roots, and thanks to condition~\ref{eq_def_adm}\ref{eq_def_adm(b)}, there exists a partition $S = S_{\alpha} \coprod S_{\beta}$
 which is said to be the \textit{bicolored partition}.

  Let $w = w_1 w_2$ be the decomposition of $w$ into the product of two involutions.
 By \cite[Lemma 5]{6} each of $w_1$ and $w_2$ can
 be expressed as a product of reflections as follows:
 \begin{equation}
   \label{two_invol}
      w = w_1{w}_2, \quad \text{ where } \quad
      w_1 = s_{\alpha_1} s_{\alpha_2} \dots s_{\alpha_k}, \quad
      w_2 = s_{\beta_1} s_{\beta_2} \dots s_{\beta_h},
 \end{equation}
 where subset $S_{\alpha} = \{\alpha_1,\dots,\alpha_k\}$ (resp. $S_{\beta} = \{\beta_1,\dots,\beta_h\}$)
 consists of mutually orthogonal roots.
 Let 
 \begin{equation}
   \label{root_subset_L}
       \Pi_w = \{ \alpha_1, \alpha_2, \dots, \alpha_k, \beta_1, \beta_2, \dots, \beta_h \}
 \end{equation}
 be the linearly independent root subset. Then, the decomposition \eqref{two_invol} is reduced, see Lemma \ref{lem_lin_indep}, and $k + h = l_C(w)$.
 The decomposition \eqref{two_invol} is said to be a \textit{bicolored decomposition}.

\section{The Cartan matrix}
  \label{sect_Cartan}

\subsection{The generalized Cartan matrix}

  The $n\times{n}$ matrix $K = (k_{ij})$, where $1 \leq i,j \leq n$, such that
 \begin{equation*}
   \begin{split}
     (C1) & \quad k_{ii} = 2 \text{ for } i = 1,\dots, n, \\
     (C2) & \quad -k_{ij} \in \mathbb{Z} = \{0, 1, 2, \dots \} \text{ for } i \neq j, \\
     (C3) & \quad  k_{ij} = 0 \text{ implies } k_{ji} = 0 \text{ for } i, j = 1, \dots, n,
    \end{split}
 \end{equation*}
  is called a \textit{generalized Cartan matrix}, \cite{10}, \cite[Section 2.1]{16}.
  For the Carter diagram $\Gamma$, which is not a Dynkin diagram,
  the condition (C2) fails: The elements $k_{ij}$ associated with dotted edges are positive.

 If the Carter diagram does not contain any cycle,
 then the Carter diagram is the Dynkin diagram, the corresponding conjugacy class
 is the conjugacy class of the Coxeter element, and
 the partial Cartan matrix is the classical Cartan matrix, which is
 the particular case of a generalized Cartan matrix.

 \subsection{The partial Cartan matrix}
   \label{sec_partial}

 Similarly to the Cartan matrix associated with Dynkin diagrams, we determine the Cartan matrix
 for each pair \{$\Gamma$, S\} consisting of the connection or Carter diagram $\Gamma$ and $\Gamma$-set $S$:

 \begin{equation}
   \label{canon_dec_2}
   B_{\Gamma} :=
      \left (
        \begin{array}{cccccc}
         (\tau_1, \tau_1) & \dots & (\tau_1, \tau_n) \\
         \dots                & \dots & \dots \\
         (\tau_n, \tau_1) & \dots & (\tau_n, \tau_n) \\
        \end{array}
      \right ),
 \end{equation}
 where $S = \{\tau_1,\dots,\tau_n\}$.
 We call the matrix $B_{\Gamma}$ a \textit{partial Cartan matrix} corresponding to
 the diagram $\Gamma$.
 The partial Cartan matrix $B_{\Gamma}$ is well-defined since products $(\tau_i, \tau_j)$ in \eqref{canon_dec_2}
 do not depend on the choice of the $\Gamma$-set $S$. The elements of the partial
 Cartan matrix are uniquely determined by the diagram $\Gamma$ as follows:

 \begin{equation}
  \label{bilin_form_B}
   (\tau_i, \tau_j) =
     \begin{cases}
        2, & \text{if } \tau_i = \tau_j, \\
        0, & \text {if } \tau_i \text{ and } \tau_j \text{ are not connected}, \\
        -1, & \text{if edge } \{\tau_i, \tau_j\} \text{ is solid}, \\
        1, & \text{if edge } \{\tau_i, \tau_j\} \text{ is dotted}. \\
     \end{cases}
 \end{equation}
Let $L$ be the subspace spanned by the vectors $\{ \tau_1,\dots,\tau_n \}$.
We write this fact as follows:
\begin{equation}
  \label{space_L}
  L = [\tau_1,\dots,\tau_n].
\end{equation}
The subspace $L$ is said to be the $S$-\textit{associated subspace}.
Let ${\bf B}$ be the Cartan matrix corresponding to the primary
root system $\varPhi$.

\begin{proposition} \label{restr_forms_coincide}
    \begin{enumerate}[(i)]
        \item The restriction of the bilinear form associated with the Cartan matrix ${\bf B}$ on the subspace $L$ coincides with the bilinear form associated with the partial Cartan matrix $B_{\Gamma}$, i.e., for any pair of vectors $v, u \in L$, we have
         \begin{equation}
           \label{restr_q}
               (v, u)_{\Gamma} = (v, u), \text{ and }
               \mathscr{B}_{\Gamma}(v) = \mathscr{B}(v).
         \end{equation}
    
        \item For every Carter diagram, the matrix $B_{\Gamma}$ is positive definite.
    \end{enumerate}
\end{proposition}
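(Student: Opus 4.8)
The plan is to treat the two parts separately: part (i) is essentially a reformulation of the definition of $B_{\Gamma}$, while part (ii) rests on the positive definiteness of the ambient form together with the fact that $B_{\Gamma}$ is realized as a genuine Gram matrix.

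For part (i), I would first observe that, since $S = \{\tau_1,\dots,\tau_n\}$ is a set of linearly independent roots (condition~\ref{eq_def_adm}\ref{eq_def_adm(a)}), it is a basis of the $S$-associated subspace $L = [\tau_1,\dots,\tau_n]$. By the very definition~\eqref{canon_dec_2}, the matrix $B_{\Gamma}$ is the Gram matrix of this basis with respect to the ambient form $(\cdot,\cdot)$, so its $(i,j)$ entry is $(\tau_i,\tau_j)$. Writing an arbitrary pair $v,u \in L$ in coordinates, $v = \sum_i x_i \tau_i$ and $u = \sum_j y_j \tau_j$, bilinearity gives
\[
  (v,u) = \sum_{i,j} x_i y_j (\tau_i,\tau_j) = x^{\top} B_{\Gamma}\, y = (v,u)_{\Gamma},
\]
which is exactly the first identity in~\eqref{restr_q}; specializing to $u = v$ yields $\mathscr{B}_{\Gamma}(v) = \mathscr{B}(v)$. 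Thus part (i) is immediate once one notes that the combinatorially prescribed matrix of~\eqref{bilin_form_B} is realized as an honest Gram matrix, which is guaranteed by the existence of a $\Gamma$-set together with the already-noted fact that the entries $(\tau_i,\tau_j)$ do not depend on its choice.

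For part (ii), the key point is \emph{not} to analyze $B_{\Gamma}$ directly --- the dotted edges contribute the atypical entry $+1$, so $B_{\Gamma}$ need not look like a classical Cartan matrix --- but to exploit that it is a Gram matrix of genuine roots. Since only $ADE$ (hence finite, crystallographic) root systems are considered, the ambient bilinear form $(\cdot,\cdot)$ associated with ${\bf B}$ is positive definite on the real span of $\varPhi$. The restriction of a positive definite form to any subspace is again positive definite, and by part (i) the matrix $B_{\Gamma}$ represents precisely this restriction of $(\cdot,\cdot)$ to $L$ in the basis $\{\tau_i\}$. Equivalently, the Gram matrix of linearly independent vectors with respect to a positive definite inner product is positive definite. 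Either way, $B_{\Gamma}$ is positive definite.

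I expect no serious obstacle here; the only points requiring care are confirming that the $\Gamma$-set is indeed a basis of $L$ (built into condition~\ref{eq_def_adm}\ref{eq_def_adm(a)}) and that the ambient form is positive definite (standard for finite root systems). The conceptual content worth emphasizing is that positive definiteness persists \emph{despite} the presence of positive off-diagonal entries coming from dotted edges, precisely because $B_{\Gamma}$ is secretly the Gram matrix of linearly independent roots inside a Euclidean root space.
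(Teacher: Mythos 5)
Your proof is correct and follows essentially the same route as the paper: part (i) is the same bilinearity computation showing $(v,u)_\Gamma$ and $(v,u)$ agree because the entries of $B_\Gamma$ are the actual inner products $(\tau_i,\tau_j)$ of linearly independent roots, and part (ii) is deduced from (i). You merely spell out the positive-definiteness argument (Gram matrix of linearly independent vectors in a Euclidean space) that the paper compresses into ``This follows from (i).''
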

 
\begin{proof}
    \begin{enumerate}[(i)]
        \item From \eqref{canon_dec_2} we deduce:
         \begin{equation*}
             (v, u)_{\Gamma}
             = \left(\sum_i {t_i{\tau_i}}, \sum_j {q_j{\tau_j}}\right)_{\Gamma} 
             = \sum_{i,j} t_i{q}_j(\tau_i, \tau_j)_{\Gamma}
             = \sum_{i,j} t_i{q}_j(\tau_i, \tau_j)
             = (v, u).
         \end{equation*}
        \item This follows from (i).
    \qedhere
    \end{enumerate}
\end{proof}

 If $\Gamma$ is a Dynkin diagram, the partial Cartan matrix $B_{\Gamma}$ is the Cartan matrix
 associated with $\Gamma$.  By \eqref{restr_q} the matrix $B_{\Gamma}$ is positive definite.
 The symmetric bilinear form associated with $B_{\Gamma}$ is denoted by $(\cdot, \cdot)_{\Gamma}$ and the
 corresponding quadratic form is denoted by $\mathscr{B}_{\Gamma}$.

\begin{remark}
    \begin{enumerate}[(i)]
        \item D.~Leites noticed that there are a number of other cases, where some off-diagonal elements of the Cartan matrices are positive. For example, this is so in the case of Lorentzian algebras, see \cite{4}, \cite{7}. However, in these cases the Cartan matrices are of \textit{hyperbolic type}, whereas the partial Cartan matrices are \textit{positive definite}.

        \item I would like to quote S. Brenner's article: ``\dots it is amusing to note that there is a surprisingly large intersection between the finite type quivers with commutativity conditions and the diagrams by Carter in describing conjugacy classes of the classical Weyl groups \dots'', \cite[p.43]{3}. On various other cases arising in the representation theory of quivers, algebras and posets with Cartan matrices containing positive off-diagonal elements, see \cite{2}, \cite[10.7]{5}, \cite{15}.
    \end{enumerate}
\end{remark}

\section{Transitions}

\subsection{First transition theorem}
  Let $\{\widetilde\Gamma, \Gamma \}$ be a homogeneous pair of Carter diagrams,
   $\widetilde{S}$ be a $\widetilde\Gamma$-set, and $S$ be a $\Gamma$-set.
  In this section, we construct a mapping connecting $\widetilde{S}$ and $S$.
  This mapping represents the transition matrix
  connecting $\widetilde{S}$ and $S$ as bases in the linear spaces.
  The transition matrix has some good properties that are presented in Theorems \ref{th_invol}
  and \ref{th_conjugate}.
  Let $\Gamma'$ be the subdiagram of $\Gamma$, and subset $S' \subset S$ be a $\Gamma'$-set.
  If $\Gamma'$ is  the Dynkin diagram, we call $S'$ the Dynkin subset.

 \begin{theorem} \label{th_invol}
  For each pair of diagrams $\{\widetilde\Gamma, \Gamma \}$ out of the list \eqref{eq_pairs_trans},
  there exists the linear transformation matrix $M_I$ mapping each $\widetilde\Gamma$-set $\widetilde{S}$  to
  some $\Gamma$-set $S$ being the image of $M_I$,
  see Tables~\ref{table:E6.MI2}-\ref{tab_part_root_syst_4} of Section~\ref{sect_proof_th}.
  The matrix $M_I$ is the \underline{transition matrix} binding $\widetilde{S}$ and $S$ as bases in the linear spaces.
    \begin{enumerate}[(i)]
    \item The matrix $M_I$ acts only on one element $\widetilde\alpha \in \widetilde{S}$ and does not change remaining elements in $\widetilde{S}$; $M_I$ transforms $\widetilde\alpha$ into the \underline{minimal element} $\alpha$ of some  Dynkin subset $S(\widetilde\alpha)$ in $\widetilde{S}$:
    \begin{equation*}
      \begin{cases}
         \widetilde\alpha \in S(\widetilde\alpha) \subset \widetilde{S}, \\
          M_I \tau_i = \tau_i \text{ for all }
               \tau_i \in \widetilde{S}, \tau_i \neq \widetilde\alpha, \\
          M_I \widetilde\alpha = \alpha =
             -\widetilde\alpha + \sum t_i \tau_i,
             \text{ where the sum is taken over }
               \tau_i \in \widetilde{S}, \tau_i \neq \widetilde\alpha, t_i \in \mathbb{Z}, \\
          \alpha \text{ - minimal element in } S(\widetilde\alpha).
      \end{cases}
    \end{equation*}
    The image $S = M_I\widetilde{S}$ is the set $\{\widetilde{S}\backslash{\widetilde{\alpha}\}} \sqcup \{\alpha\}$ that satisfies to  the orthogonality relations of the Carter diagram $\Gamma$.

    \item The transformation $M_I: \widetilde{S} \longmapsto S$ is an \underline{involution}\footnotemark[1] on the set $\widetilde{S} \sqcup \{\alpha\}$:
    \begin{equation*}
        M_I \widetilde\alpha = \alpha \quad \text{ and } \quad M_I\alpha = \widetilde\alpha.
    \end{equation*}
  \end{enumerate}
\end{theorem}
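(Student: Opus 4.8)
The plan is to prove the theorem by explicit construction, running through the sixteen homogeneous pairs $\{\widetilde\Gamma,\Gamma\}$ of \eqref{eq_pairs_trans} one at a time and imitating the model computation already carried out for $\{D_4(a_1),D_4\}$. In each case I first single out the root $\widetilde\alpha\in\widetilde S$ to be moved, together with a Dynkin subset $S(\widetilde\alpha)\subset\widetilde S$ containing it, chosen to be a base of an irreducible $ADE$ subsystem $\varPhi(\widetilde\alpha)\subset\varPhi$ in which $\widetilde\alpha$ occurs with coefficient $1$ in the highest root $\theta$. I then set $\alpha:=-\theta$, the lowest (minimal) root of $\varPhi(\widetilde\alpha)$, and define $M_I$ on the basis $\widetilde S$ by $M_I\widetilde\alpha=\alpha$ and $M_I\tau_i=\tau_i$ for $\tau_i\neq\widetilde\alpha$. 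Since $\theta=\widetilde\alpha+\sum_{\tau_i\in S(\widetilde\alpha),\,\tau_i\neq\widetilde\alpha}c_i\tau_i$ with non-negative integers $c_i$, the element $\alpha$ automatically has the shape $\alpha=-\widetilde\alpha+\sum t_i\tau_i$ with $t_i\in\mathbb{Z}$ required in part~(i), it is a genuine root, and its coefficients $t_i$ are exactly those recorded in Tables~\ref{table:E6.MI2}--\ref{tab_part_root_syst_4}.

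Because $\widetilde S$ is a basis of the $S$-associated subspace $L$, this prescription determines $M_I$ uniquely as a linear map on $L$; its matrix in the basis $\widetilde S$ is the identity with the single column of $\widetilde\alpha$ replaced by the coordinate vector of $\alpha$, whose determinant equals the $\widetilde\alpha$-coefficient of $\alpha$, namely $-1$. Hence $M_I$ is invertible, $S=(\widetilde S\setminus\{\widetilde\alpha\})\cup\{\alpha\}$ is again a basis of $L$, and $M_I$ is the transition matrix binding the two bases. The involution property of part~(ii) is then forced: from $\alpha=-\widetilde\alpha+\sum t_i\tau_i$ we get $\sum t_i\tau_i=\alpha+\widetilde\alpha$, so
\[
  M_I\alpha=M_I\Bigl(-\widetilde\alpha+\sum t_i\tau_i\Bigr)=-\alpha+\sum t_i\tau_i=\widetilde\alpha,
\]
which is precisely \eqref{th_invol_4} with the same coefficients $t_i$ as in \eqref{th_invol_3}. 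Thus $M_I^2$ fixes the spanning set $\widetilde S$, whence $M_I^2=\mathrm{id}$ on $L$, and set-theoretically $M_I$ is the transposition exchanging $\widetilde\alpha$ and $\alpha$ while fixing every other $\tau_i$, an involution of $\widetilde S\sqcup\{\alpha\}$.

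The genuine content, and the bulk of the work, is the remaining claim in part~(i): that the image $S$ satisfies the orthogonality and solid/dotted relations of the target Carter diagram $\Gamma$, equivalently that the Gram matrix of $S$ is the partial Cartan matrix $B_\Gamma$ determined by \eqref{bilin_form_B}. By Proposition~\ref{restr_forms_coincide} every inner product is computed inside $L$ from the known Gram matrix $B_{\widetilde\Gamma}$ of $\widetilde S$, so each entry $(\alpha,\tau_j)=-(\theta,\tau_j)$ reduces to a finite linear-algebra computation, and only the products involving the moved root $\alpha$ can differ from those of $\widetilde\alpha$. One then checks that these produce exactly the edges of $\Gamma$ (solid where negative, dotted where positive, absent where zero), which is where the case-by-case bookkeeping of Tables~\ref{table:E6.MI2}--\ref{tab_part_root_syst_4} becomes unavoidable.

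Accordingly, the main obstacle is not any single estimate but the combinatorial matching: for each of the sixteen diagrams one must locate $\widetilde\alpha$ and $S(\widetilde\alpha)$ so that simultaneously (a) $\widetilde\alpha$ has coefficient $1$ in the highest root $\theta$, which guarantees the normal form $\alpha=-\widetilde\alpha+\sum t_i\tau_i$, and (b) the resulting Gram matrix is that of the intended $\Gamma$ and not of another member of the same homogeneous class. The most delicate instance is case~(16), $\{E_8(a_8),E_8(a_7)\}$, where $M_I$ eliminates three cycles at once; there $S(\widetilde\alpha)$ is larger, several inner products must be recomputed, and verifying that this single move lands on the correct diagram demands the most care.
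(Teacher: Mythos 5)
Your proposal follows essentially the same route as the paper: in each of the sixteen cases the moved root $\widetilde\alpha$ is sent to the lowest root $-\theta$ of a Dynkin subsystem in which it occurs with coefficient $1$, the orthogonality relations of the target diagram are then checked entry by entry using the partial Cartan matrix of $\widetilde\Gamma$, and the involution property follows from the same two-line computation starting from $\alpha=-\widetilde\alpha+\sum t_i\tau_i$. The only substantive difference is that you state the sixteen verifications as a programme rather than carrying them out, whereas those verifications (the ``Checking relations'' blocks of Tables~\ref{table:E6.MI2}--\ref{tab_part_root_syst_4}) are the actual body of the paper's proof; on the other hand, your unifying observation that $S(\widetilde\alpha)$ must be chosen so that $\widetilde\alpha$ has coefficient $1$ in the highest root (which is what makes the normal form, and hence the involution, possible) is used implicitly but never articulated in the paper.
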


\footnotetext[1]{The index $I$ in the symbol $M_I$ originally appeared to indicate that $M_I$ is an involution.
In what follows, we will use also other notation for the matrix $M$ and its index,
which are more related to a specific situation.}

For each pair of diagrams $\{\widetilde\Gamma, \Gamma \}$ from list \eqref{eq_pairs_trans},
the matrix $M_I$ is defined in Tables~\ref{table:E6.MI2}-\ref{tab_part_root_syst_4} of Section~\ref{sect_proof_th}.
The matrix $M_I$ is the transition matrix transforming each basis $\widetilde{S}$ into some basis $S$.

The proof of Theorem \ref{th_invol} is given in Section \ref{sect_proof_th}. 
It is carried out separately for each pair
 $\{\widetilde\Gamma, \Gamma\}$ in the adjacency list \eqref{eq_pairs_trans}.

\subsection{The chain of homogeneous pairs}

  Let $\widetilde{\Gamma}$ be a Carter diagram. Denote by $C(\widetilde{\Gamma})$ 
  the homogeneous class containing $\widetilde{\Gamma}$.
  For any Carter diagram $\widetilde{\Gamma}$ and the Dynkin diagram $\Gamma$ 
  from  $C(\widetilde{\Gamma})$, there exists the chain of homogeneous pairs 
  connecting $\widetilde{\Gamma}$ and $\Gamma$ as follows:
\begin{equation}
  \label{eq_comm_chain}
   \{ \{\widetilde{\Gamma}, \Gamma_1\}, \{\Gamma_1, \Gamma_2\},\dots,
     \{\Gamma_{k-1}, \Gamma_k\},\{\Gamma_k, \Gamma\} \}.
\end{equation}
  This fact follows easily from consideration of the adjacency list \eqref{eq_pairs_trans}.

\subsubsection{Example: from $E_8(a_8)$ to $E_8$ }
  \label{sec_prod_trans_matr}
 There are $16$ cases in Section~\ref{sect_proof_th}.
 Denote by $M_I(n)$ the transition matrix of the $n$-th case. The similarity transformation $L_{\tau_i}$
 from \eqref{eq_similar} is the diagonal matrix of the form
 \begin{equation*}
    \diag(1,1\dots,1,-1,1,\dots,1)
 \end{equation*}
  with $-1$ in the $\{i,i\}$th entry.
 The homogeneous pairs are bound by matrices $M_I$ and
 similarity matrices $L_{\tau_i}$. Consider, for example, the chain diagrams
 $E_8(a_8)$, $E_8(a_7)$, $E_8(a_5)$, $E_8(a_4)$, $E_8(a_1)$, $E_8$, see eq. \eqref{eq_chain_trans}
 and Section \ref{sec_coval_list}.
 
\begin{equation}
\label{eq_chain_trans}
\begin{split}
 E_8(a_8) \overset{M_I(16)}{\xrightarrow{\hspace{0.8cm}}}
 E_8(a_7) \overset{L_{\beta_2}}{\xrightarrow{\hspace{0.8cm}}} &
 E_8(a_7) \overset{M_I(15)}{\xrightarrow{\hspace{0.8cm}}}
 E_8(a_5)  \overset{L_{\widetilde\alpha_3}}{\xrightarrow{\hspace{0.8cm}}} E_8(a_5)  \\
  \overset{M_I(13)}{\xrightarrow{\hspace{0.8cm}}} E_8(a_4) &
\overset{M_I(12)}{\xrightarrow{\hspace{0.8cm}}} E_8(a_1)  \overset{M_I(9)}{\xrightarrow{\hspace{0.8cm}}} E_8
 \overset{L_{\alpha_4}L_{\beta_4}}{\xrightarrow{\hspace{1.2cm}}} E_8
 \end{split}
\end{equation}

In eq.~\eqref{eq_chain_trans}, we mean that instead of each diagram $\Gamma$ there is some $\Gamma$-set.
The matrices $M_I(n)$ are given in Appendix \ref{sec_trans}.
Consider the product of matrices of \eqref{eq_chain_trans}:

\begin{equation*}
   F = L_{\alpha_4}L_{\beta_4}{}M_I(9)M_I(12)M_I(13)L_{\widetilde\alpha_3}
       M_I(15)L_{\beta_2}M_I(16).
\end{equation*}

The matrix $F$ maps the $E_8(a_8)$-basis $\widetilde{S}$  to a certain $E_8$-basis $S = F\widetilde{S}$:
\begin{equation*}
F:  \widetilde{S} = \{\widetilde\alpha_1, \alpha_2, \widetilde\alpha_3, \widetilde\alpha_4,
               \beta_1, \beta_2, \beta_3, \beta_4\} \longmapsto
S =  \{\alpha_1, \alpha_2, \alpha_3, \alpha_4, \beta_1, \beta_2, \beta_3, \beta_4\}
\end{equation*}

The chain \eqref{eq_chain_trans} is parallel
to the \underline{ascending chain} of maximal eigenvalues of the corresponding partial Cartan matrices,
see Section \ref{sec_eigenv}.

\subsubsection{Alternative transitions}
  \label{sec_altern}

The transition matrices from the adjacency list \eqref{eq_pairs_trans}
do not constitute a complete set of possible transitions. For example,
to the transition $\{ E_8(a_6),  E_8(a_4) \}$, one can add more pairs containing $E_8(a_6)$:
$\{E_8(a_6),  E_8(a_5) \}$ and $\{E_8(a_6),  E_8(a_1)\}$, see Fig. \ref{fig_altern_trans}.
The marked vertex corresponds to the root, which is converted with the transition matrix $M_I$.
To keep the solid and dotted edges corresponding to cases (9) and (13)
in Tables \ref{tab_part_root_syst_3} -- \ref{tab_part_root_syst_4},
each such transition is followed by actions of similarities which result
in the desired \textit{edge type}\footnotemark[1].
Note that the vertex names for $E_8(a_1)$ and $E_8(a_5)$
are different from those in Tables \ref{tab_part_root_syst_3} -- \ref{tab_part_root_syst_4}:
\begin{equation*}
  \begin{split}
     \{ E_8(a_6), & E_8(a_4) \}:  ~ M_I \widetilde\beta_4 = \beta_4 =
                        -(\widetilde\beta_4 + 2\alpha_1 + 2\beta_1 + 2\alpha_2 + \beta_2 + \beta_3). \\
     \\
     \{ E_8(a_6), & E_8(a_5) \}:  ~ M_I \beta_2 = \widetilde\beta_2 =
                       -(\widetilde\beta_4 + 2\alpha_1 + 2\beta_1 + 2\alpha_2 + \beta_2 + \beta_3), \\
                   & \text{ followed by actions of }  L_{\tau},  \text{ with }
                    \tau = \alpha_1, \alpha_2, \beta_1, \widetilde\beta_2.    \\
     \\
     \{ E_8(a_6), & E_8(a_1) \}: ~ M_I \beta_3 = \widetilde\beta_3 =
                        -(\widetilde\beta_4 + 2\alpha_1 + 2\beta_1 + 2\alpha_2 + \beta_2 + \beta_3). \\
                    & \text{ followed  by actions of }  L_{\tau}, \text{ with }
                    \tau = \widetilde\alpha_4. 
  \end{split}
\end{equation*}
\footnotetext[1]{The property of the edge to be solid or dotted is called the edge type.}

\begin{figure}
    \centering
    \includegraphics[scale=0.35]{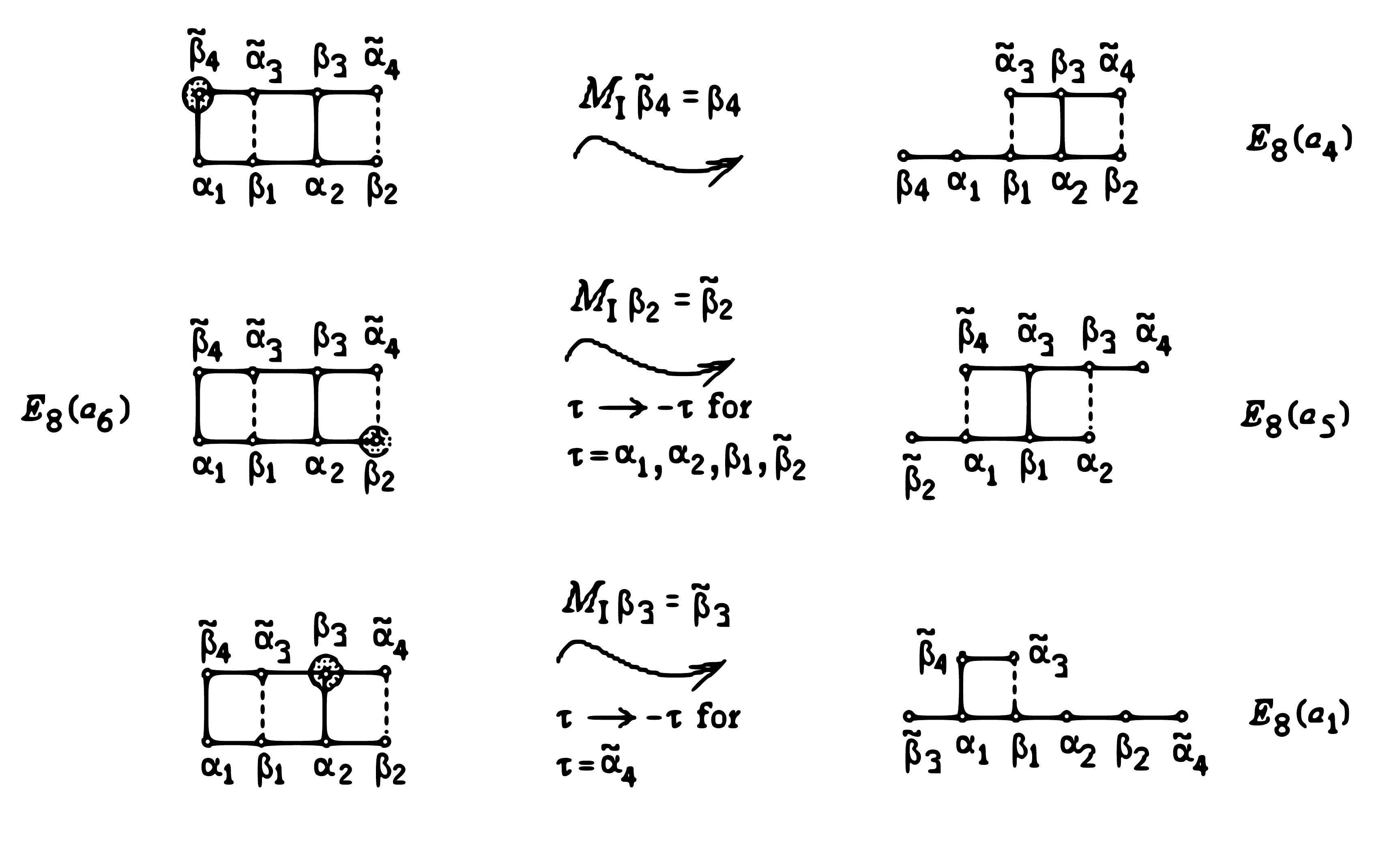}
    \caption{Alternative transitions $\{E_8(a_6),  E_8(a_5) \}$ and $\{E_8(a_6),  E_8(a_1) \}$)}
    \label{fig_altern_trans}
\end{figure}

\subsubsection{The product of transition matrices}
 As in Section \ref{sec_prod_trans_matr}, for any chain \eqref{eq_comm_chain}, one can construct
 the matrices $F$ and $F^{-1}$, where $F$ is the product of corresponding transition matrices $M_I(n)$
 and similarity matrices $L_{\tau_i}$. The matrix $F$ is invertible since all $M_I(n)$  and $L_{\tau_i}$
 are invertible.
\begin{equation}
  \label{eq_F_and_Fmin1}
   F : \widetilde{\Gamma} \longmapsto  \Gamma, \qquad F^{-1} : \Gamma \longmapsto  \widetilde{\Gamma}.
\end{equation}
This means that for any $\widetilde{\Gamma}$-set $\widetilde{S}$ there exists $\Gamma$-set $S$
such that
\begin{equation}
  \label{eq_F_and_Fmin1_2}
   F\widetilde{S} = S, \qquad F^{-1}{S} = \widetilde{S}.
\end{equation}
The matrix $F$ does not depend on $\widetilde{S}$ and $S$.
If $S = \{\tau_1,\dots, \tau_n\}$, $F$ transforms $\tau_i$ so that
\begin{equation}
  \label{eq_coed_fij}
   F\tau_i = \sum\limits_{j = 1}^n f_{ji}\tau_i,
\end{equation}
where $f_{ji}$  are some coefficients that depend only on diagrams $\widetilde{\Gamma}$ and $\Gamma$.

\subsection{Action of the Weyl group on a Carter diagram}

We suppose that $\{\widetilde\Gamma, \Gamma\}$ is the homogeneous pair of Carter diagrams,
where $\Gamma$ is the simply-laced Dynkin diagram, and $W$ is the Weyl group associated
with $\Gamma$.

\begin{lemma}
  \label{lem_commute}
  Let $F$, $F^{-1}$ be the matrices described in \eqref{eq_F_and_Fmin1}, \eqref{eq_F_and_Fmin1_2}.
  Then,
  \begin{enumerate}[(i)]
    \item The matrix $F$ commutes with the Weyl group $W$ on any $\Gamma$-set $S$ as follows:
    \begin{equation}
      \label{lem_F_commute}
        wF = Fw \text{ for any } w \in W.
    \end{equation}
    
    \item Let $\Gamma$-sets $S$ and $S'$ be conjugate by some $w \in W$: $wS = S'$. Then, $FS$ and $FS'$ are conjugate by the same element $w \in W$, i.e,
    \begin{equation*}
           wS = S' \quad \text{ implies }  \quad  wFS = FS'.
    \end{equation*}
  \end{enumerate}
\end{lemma}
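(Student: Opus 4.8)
The plan is to prove Lemma~\ref{lem_commute} by first establishing part~(i) through a careful analysis of how $F$ acts on the linear span, and then deriving part~(ii) as a quick formal consequence. The essential point to understand is the nature of $F$ as a linear map. By construction, $F$ is a product of transition matrices $M_I(n)$ and similarity matrices $L_{\tau_i}$, each of which is a \emph{fixed} linear transformation on the ambient space $L$ (the $S$-associated subspace), and by \eqref{eq_F_and_Fmin1_2} the matrix $F$ does not depend on the particular choice of $\Gamma$-set $S$. The first thing I would make precise is that $F$ is an \emph{isometry} of the bilinear form $(\cdot,\cdot)$ restricted to $L$. Each $L_{\tau_i}$ is manifestly orthogonal (a reflection-like sign change that, by Proposition~\ref{restr_forms_coincide}, preserves $(\cdot,\cdot)$), and each $M_I$ is an involution sending a root to another root while fixing an orthonormal-type complement, so it too preserves the form; thus $F$ preserves $(\cdot,\cdot)$ on $L$.

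Next I would exploit the interplay between $F$ and an arbitrary $w \in W$. The key observation is that \eqref{eq_F_and_Fmin1_2} holds simultaneously for \emph{all} pairs $(\widetilde{S}, S)$: whenever $S$ is a $\Gamma$-set, $F^{-1}S$ is the corresponding $\widetilde{\Gamma}$-set, and conversely. Since $W$ acts on $\Gamma$-sets (each $w$ permutes/transforms the roots of a $\Gamma$-set to another $\Gamma$-set because $\Gamma$ is a simply-laced Dynkin diagram and $W$ preserves the root system and the orthogonality relations), the composite $F^{-1} w F$ is well-defined on $\widetilde{\Gamma}$-sets. To prove $wF = Fw$, i.e.\ $F^{-1}wF = w$, I would argue that $F^{-1}wF$ and $w$ are both isometries of $L$ that agree on a basis. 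Concretely, take any $\Gamma$-set $S = \{\tau_1,\dots,\tau_n\}$; then $\widetilde{S} = F^{-1}S$ is a basis of $L$, and it suffices to check that $F^{-1}wF$ and $w$ coincide on each $\widetilde\tau_i = F^{-1}\tau_i$. The crucial reduction is that the action of any $w \in W$ on a root $\tau$ is determined by the root-system data, and $F$ intertwines this data because it sends $\widetilde{\Gamma}$-sets to $\Gamma$-sets preserving the partial Cartan matrix (hence all inner products $(\tau_i,\tau_j)$, by \eqref{bilin_form_B} and Proposition~\ref{restr_forms_coincide}).

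The cleanest route, which I would adopt to avoid case-by-case computation over the sixteen entries of \eqref{eq_pairs_trans}, is the following. Because $F$ is an isometry of $L$ whose restriction carries the root subsystem $\varPhi \cap L$ into itself (it sends roots to roots, being a product of similarities and root-to-root involutions), $F$ normalizes the reflection group generated by the $s_\tau$. For a reflection $s_\gamma$ with $\gamma$ a root, the standard identity $F s_\gamma F^{-1} = s_{F\gamma}$ holds for any isometry $F$. Thus conjugation by $F$ permutes the reflections according to the permutation $\gamma \mapsto F\gamma$ of the roots. The content of \eqref{lem_F_commute} is then equivalent to the statement that this permutation is \emph{trivial on $W$}, i.e.\ that $F$ acts as the identity on the relevant $W$-action. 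Here I expect the \textbf{main obstacle}: $F$ is emphatically \emph{not} the identity as a linear map (it genuinely moves $\widetilde{S}$ to $S$), so $wF = Fw$ cannot hold by $F$ being central in an obvious sense. The resolution must be that the lemma's commutation is understood in the specific operational sense indicated by the phrase ``on any $\Gamma$-set $S$'': one reads $wF$ and $Fw$ as maps applied to elements of a $\Gamma$-set, where $w$ acts by its root-system action and $F$ by the transition, and the claim is the compatibility $w(FS) = F(wS)$ read off from \eqref{eq_coed_fij}. I would therefore formulate (i) precisely as: for every $\Gamma$-set $S$ and every $w\in W$, $wFS = FwS$, and prove it by noting $wS$ is again a $\Gamma$-set (since $w\in W$ preserves the Dynkin-diagram type of $\Gamma$), so $F(wS)$ is defined and equals the $\widetilde\Gamma$-set associated to $wS$; simultaneously $FS$ is a $\widetilde\Gamma$-set and $w(FS)$ is its $W$-translate.

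Finally, part~(ii) follows immediately and formally from part~(i): given $wS = S'$ with $S, S'$ both $\Gamma$-sets, apply $F$ to both sides and use the commutation \eqref{lem_F_commute} to get $F(wS) = w(FS)$, whence $FS' = F(wS) = w(FS)$, which is exactly the assertion $wFS = FS'$. The only care needed is to confirm that $S'=wS$ is indeed a $\Gamma$-set so that $FS'$ is defined — but this holds because $w\in W$ sends any $\Gamma$-set to a $\Gamma$-set, $\Gamma$ being simply-laced and $W$ preserving all the inner products \eqref{bilin_form_B} that characterize $\Gamma$. I anticipate that the bulk of the write-up will be in pinning down the precise operational meaning of \eqref{lem_F_commute} so that the statement is both true and non-vacuous; once that is fixed, both parts are short.
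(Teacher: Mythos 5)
Your final argument is essentially the paper's own proof: you read \eqref{lem_F_commute} as the compatibility $w(FS)=F(wS)$, with $F$ acting by the fixed coefficients $f_{ji}$ of \eqref{eq_coed_fij} relative to whichever basis it is applied to, so that $Fw\tau_i=\sum_j f_{ji}\tau'_j=w\sum_j f_{ji}\tau_j=wF\tau_i$ by linearity of $w$; part (ii) is then the same one-line consequence the paper draws. One caveat: your opening claim that $F$ (and each $M_I$) is an isometry of $(\cdot,\cdot)$ on $L$ is false --- by \eqref{eq_trans_matr} the Gram matrix changes from $B_{\widetilde\Gamma}$ to $B_\Gamma$, and concretely in case (1) the roots $\widetilde\alpha_3$ and $\alpha_1$ are connected while $\alpha_3=M_I\widetilde\alpha_3$ and $\alpha_1$ are orthogonal, so inner products are not preserved; consequently the normalizer argument via $Fs_\gamma F^{-1}=s_{F\gamma}$ is unavailable. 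You correctly recognize that route as a dead end and your ultimate proof does not depend on it, so the proposal stands.
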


\begin{proof}
    \begin{enumerate}[(i)]
        \item It suffices to prove eq. \eqref{lem_F_commute} for each element $\tau_i \in S$. Each element $w \in W$ transforms basis $S$ to another basis $S' = wS$, where $w\tau_i = \tau'_i$, and $S' = \{\tau'_1,\dots, \tau'_n\}$. In our case,
        \begin{equation*}
            \begin{split}
                & Fw\tau_i = F\tau'_i = \sum\limits_{j = 1}^n f_{ji}\tau'_i \quad \text{ and } \\
                & wF\tau_i = w\sum\limits_{j = 1}^n f_{ji}\tau_i = \sum\limits_{j = 1}^n f_{ji}w\tau_i =
                            \sum\limits_{j = 1}^n f_{ji}\tau'_i.
            \end{split}
        \end{equation*}
        Therefore, $Fw\tau_i = wF\tau_i$ for any $\tau_i \in S$.

        \item If $wS = S'$ then by (i), we have $wFS = FwS = FS'$, i.e., $FS$ and $FS'$ conjugate by the same element $w \in W$.
    \qedhere
    \end{enumerate}
\end{proof}

\subsection{Second transition theorem}

\begin{theorem} \label{th_conjugate}
    \begin{enumerate}[(i)]
        \item For any Carter diagram $\widetilde\Gamma$, all $\widetilde\Gamma$-sets are \underline{conjugate} under the Weyl group~$W$.

        \item Let $\{ \widetilde\Gamma, \Gamma \}$ be any homogeneous pair of Carter diagrams, where $\Gamma$ is the Dynkin diagram, and let $\widetilde{S}$  be \underline{any} $\widetilde\Gamma$-set, $S$  be \underline{any} $\Gamma$-set. Then, there exists $F$, the product of transition matrices $M_I(n)$ and some matrices of similarity maps like $L_{\tau_i}$ as in Theorem \ref{th_invol} and Section \ref{sec_prod_trans_matr} and $w \in W$ such that $S = wF\widetilde{S}$.
    \end{enumerate}
\end{theorem}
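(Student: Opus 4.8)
The plan is to reduce both assertions to the single classical fact that the Weyl group $W$ acts simply transitively on the bases of the root system associated with $\Gamma$, and then to transport this transitivity along the invertible transition matrix $F$ obtained from the chain \eqref{eq_comm_chain}. Throughout, let $\Gamma$ denote the Dynkin diagram in the homogeneous class $C(\widetilde\Gamma)$, and let $F$ be the matrix of \eqref{eq_F_and_Fmin1} produced, as in Section \ref{sec_prod_trans_matr}, from the chain of homogeneous pairs joining $\widetilde\Gamma$ to $\Gamma$. By \eqref{eq_F_and_Fmin1_2} the map $F$ sends every $\widetilde\Gamma$-set to a $\Gamma$-set and does not depend on the particular set chosen.

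For part (ii) the argument is essentially immediate. Given any $\widetilde\Gamma$-set $\widetilde{S}$ and any $\Gamma$-set $S$, the image $F\widetilde{S}$ is again a $\Gamma$-set. Since $\Gamma$ is Dynkin, both $F\widetilde{S}$ and $S$ are bases of the root system. Simple transitivity of $W$ on bases then yields a unique $w \in W$ with $w(F\widetilde{S}) = S$, which is exactly the claimed identity $S = wF\widetilde{S}$.

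For part (i) I would take two $\widetilde\Gamma$-sets $\widetilde{S}_1,\widetilde{S}_2$ and set $S_i := F\widetilde{S}_i$, which are $\Gamma$-sets, hence bases. By simple transitivity there is $w \in W$ with $wS_1 = S_2$. The point is to carry this conjugacy back through $F$: Lemma \ref{lem_commute}(i) gives $wF = Fw$ on a $\Gamma$-set, and because a $\Gamma$-set is a basis this is an identity of linear operators, whence $F^{-1}$ also commutes with $w$. Therefore
\begin{equation*}
    \widetilde{S}_2 = F^{-1}S_2 = F^{-1}wS_1 = wF^{-1}S_1 = w\widetilde{S}_1,
\end{equation*}
so that $\widetilde{S}_1$ and $\widetilde{S}_2$ are conjugate under $W$ (the case where $\widetilde\Gamma$ is itself Dynkin is covered directly, with $F$ trivial).

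All the genuine content has been deferred to earlier results: the construction of $F$ from Theorem \ref{th_invol} together with the chain \eqref{eq_comm_chain}, and the commutation Lemma \ref{lem_commute}. Consequently the only obstacle I anticipate is bookkeeping rather than conceptual: I must check that the element-wise commutation $Fw\tau_i = wF\tau_i$ proved over a $\Gamma$-set upgrades to the operator identity $F^{-1}w = wF^{-1}$ used above, which relies on the $\Gamma$-set being a basis and hence on $\Gamma$ being Dynkin, and that the \emph{same} $F$ serves simultaneously for $\widetilde{S}_1$ and $\widetilde{S}_2$, which is guaranteed by the independence of $F$ from the chosen set recorded after \eqref{eq_F_and_Fmin1_2}.
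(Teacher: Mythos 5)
Your proposal is correct and follows essentially the same route as the paper: both parts reduce to the conjugacy of all bases of the root system of $\Gamma$ under $W$, transported through the invertible matrix $F$ built from the chain \eqref{eq_comm_chain}, with Lemma \ref{lem_commute} supplying the commutation $wF = Fw$ needed to cancel $F$ in part (i). Your added remarks about upgrading the element-wise commutation to an operator identity and about $F$ being independent of the chosen set are just explicit versions of steps the paper leaves implicit.
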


\begin{proof}
    \begin{enumerate}[(i)]
        \item The Carter diagram $\widetilde\Gamma$ belongs to some homogeneous class $C(\widetilde\Gamma)$.
        Every homogeneous class contains a Dynkin diagram $\Gamma$.  As in Section \ref{sec_prod_trans_matr}, there exists
        the mapping $F$ from any $\widetilde\Gamma$-set $\widetilde{S}$ to some $\Gamma$-set $S$.
        By Theorem \ref{th_invol},
        the mapping $F$ is the product of transition matrices $M_I(n)$ and some matrices of similarity maps like $L_{\tau_i}$.
        
        Let $\widetilde{S}'$ and $\widetilde{S}''$ be any $\widetilde\Gamma$-sets. We will prove that
        $\widetilde{S}'$ and $\widetilde{S}''$ are conjugate under the Weyl group $W$,
        i.e., 
        \begin{equation}
          \label{eq_conj_Carter}
              w\widetilde{S}' = \widetilde{S}'', \quad \text{ for some } \quad w \in W.
        \end{equation}
        There exist $\Gamma$-sets $S'$ and $S''$ such that
        \begin{equation}
          \label{eq_F_matr}
              F\widetilde{S}' = S', \qquad F\widetilde{S}'' = S'',
        \end{equation}
        see Fig. \ref{fig_map_F}. Let $\varPhi$ be the root system associated with $\Gamma$.
        All bases in $\varPhi$ are conjugate, see \cite[Theorem 1.4]{9}.
        Then, there exists $w \in W$, such that $wS' = S''$. By \eqref{eq_F_matr},
        \begin{equation}
          \label{eq_w_F}
             wF\widetilde{S}' = wS' = S'' = F\widetilde{S}''.
        \end{equation}
        By Lemma \ref{lem_commute}, transformations $w$ and $F$ in \eqref{eq_w_F} commute, so
        \begin{equation}
          \label{eq_F_invert}
             Fw\widetilde{S}' = F\widetilde{S}'', \quad  \text{ and } \quad  w\widetilde{S}' = \widetilde{S}''.
        \end{equation}
    
        \item First, by Theorem \ref{th_invol}, we transform $\widetilde{S}$ to some $\Gamma$-set $S'$ by the mapping $F$ as in (i), see Fig.~\ref{fig_map_F}. Further, as in (i), there exists $w \in W$ such that  $wS' = S$. Thus,
        \begin{equation*}
             wF\widetilde{S} = w{S}' = S.
             \qedhere
        \end{equation*}
    \end{enumerate}
\end{proof}

\begin{figure}
    \centering
    \includegraphics[scale=0.3]{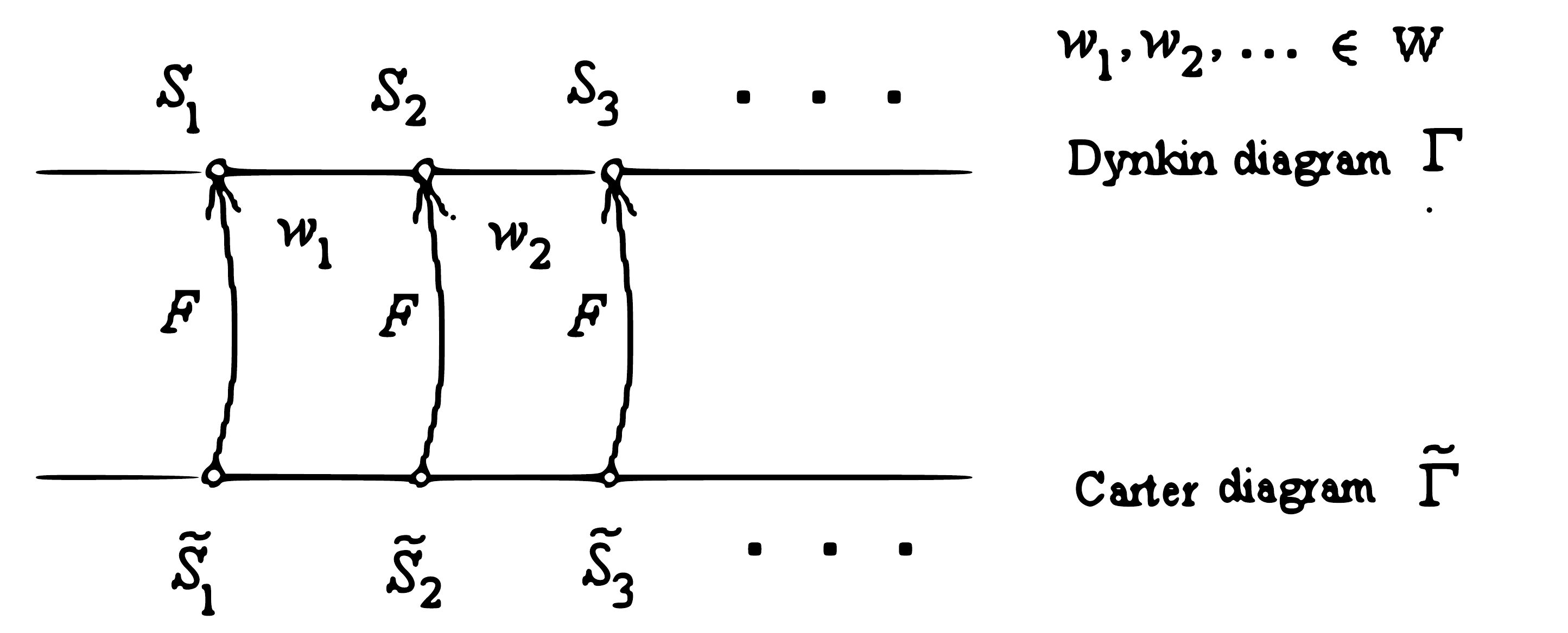}
    \caption{$F\widetilde{S}_i = S_i$, where $\widetilde{S}_i$ (resp. $S_i$), are some $\widetilde\Gamma$-sets (resp. $\Gamma$-sets), $i = 1,2,3,\dots$}
    \label{fig_map_F}
\end{figure}

\subsection{Conjugacy of all $\widetilde\Gamma'$-sets}
As above, $\{\widetilde\Gamma, \Gamma\}$ is a homogeneous pair of Carter diagrams,
$\Gamma$ is the simply-laced Dynkin diagram, $W$ is the Weyl group associated
with $\Gamma$.

Similarly to the fact that all $\Gamma$-bases (where $\Gamma$ is the Dynkin diagram)
are conjugate under the Weyl group \cite[Theorem 1.4]{9},
the same fact holds for Carter diagrams.

\begin{corollary}
   All $\widetilde\Gamma$-sets are conjugate under the Weyl group $W$
associated with the Dynkin diagram $\Gamma$.
\end{corollary}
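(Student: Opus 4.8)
The plan is to read this off directly from Theorem~\ref{th_conjugate}(i), observing only that the conjugating Weyl-group element can be taken in the Weyl group $W = W(\Gamma)$ of the Dynkin diagram $\Gamma$. The Carter diagram $\widetilde\Gamma$ lies in a unique homogeneous class $C(\widetilde\Gamma)$, and by \eqref{eq_one_type} this class contains a simply-laced Dynkin diagram $\Gamma$; this $\Gamma$ is the second member of the homogeneous pair $\{\widetilde\Gamma,\Gamma\}$ fixed in this subsection. Following Section~\ref{sec_prod_trans_matr}, I would first assemble the transition matrix $F$ along a chain \eqref{eq_comm_chain} joining $\widetilde\Gamma$ to $\Gamma$, realized as a product of the involutions $M_I(n)$ of Theorem~\ref{th_invol} and similarity matrices $L_{\tau_i}$. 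By \eqref{eq_F_and_Fmin1} and \eqref{eq_F_and_Fmin1_2}, $F$ is invertible and carries every $\widetilde\Gamma$-set to a $\Gamma$-set.

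Next, take two arbitrary $\widetilde\Gamma$-sets $\widetilde{S}'$ and $\widetilde{S}''$ and push them forward, setting $S' = F\widetilde{S}'$ and $S'' = F\widetilde{S}''$; both are bases of the root system $\varPhi$ associated with $\Gamma$. Since all bases of $\varPhi$ are conjugate under $W = W(\Gamma)$ by \cite[Theorem~1.4]{9}, there is some $w \in W$ with $wS' = S''$. The key step is then to transport this $w$ back to $\widetilde\Gamma$-sets using the commutation relation of Lemma~\ref{lem_commute}: since $wF = Fw$, we obtain
\begin{equation*}
   F\bigl(w\widetilde{S}'\bigr) = wF\widetilde{S}' = wS' = S'' = F\widetilde{S}''.
\end{equation*}
Because $F$ is invertible, cancelling it yields $w\widetilde{S}' = \widetilde{S}''$ with the same $w \in W(\Gamma)$. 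Hence any two $\widetilde\Gamma$-sets are conjugate under the Weyl group of the Dynkin diagram $\Gamma$, which is the assertion.

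There is no genuine obstacle here: the statement is an immediate specialization of Theorem~\ref{th_conjugate}(i), and its whole content has already been supplied by Lemma~\ref{lem_commute} (commutation of $F$ with $W$) together with the classical conjugacy of Dynkin bases. The one point deserving a remark is that the conjugating element $w$ genuinely lies in $W(\Gamma)$ rather than in some larger transformation group: this is automatic, because $w$ is produced purely from the conjugacy of two bases of $\varPhi$, which takes place inside $W(\Gamma)$, and Lemma~\ref{lem_commute} preserves this same $w$ when passing back through $F$.
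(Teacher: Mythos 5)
Your proposal is correct and follows essentially the same route as the paper: push both $\widetilde\Gamma$-sets forward by the transition matrix $F$ to $\Gamma$-bases, invoke the classical conjugacy of bases of $\varPhi$ under $W(\Gamma)$, and then use the commutation $wF = Fw$ of Lemma~\ref{lem_commute} together with the invertibility of $F$ to pull the conjugating element back. This is exactly the paper's argument (which is itself a repackaging of the proof of Theorem~\ref{th_conjugate}(i)), so nothing further is needed.
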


\begin{proof}
Let $\widetilde{S}$ and $\widetilde{S'}$ be two $\widetilde\Gamma'$-sets.
By Theorem \ref{th_conjugate}, there exist $\Gamma$-bases $S$ and $S'$ and $w \in W$ such that
\begin{equation*}
   F: \widetilde{S} \longmapsto S, \quad
   w: S \longmapsto S', \quad
   F: \widetilde{S'} \longmapsto S'.
\end{equation*}
In other words,
\begin{equation*}
 \begin{split}
   & Fw\widetilde{S} = wF\widetilde{S} = S', \quad F\widetilde{S'} = S', \quad \text{ i.e.,} \\
   & Fw\widetilde{S} = F\widetilde{S'}, \quad \text{ and } \quad w\widetilde{S} = \widetilde{S'}.
    \qedhere
 \end{split}
\end{equation*}
\end{proof}

Let $C(\Gamma)$, as above, be a homogeneous class of Carter diagrams out of \eqref{eq_one_type}, where $\Gamma$ is the Dynkin diagram. Denote by $\nu(\Gamma')$ the number of all $\Gamma'$-sets.

\begin{corollary}
  The number $\nu(\Gamma')$ is the constant number for any homogeneous class.
  For any Carter diagram $\Gamma' \in C(\Gamma)$, the number $\nu(\Gamma')$ coincides with the number of bases
  in the root system associated with $\Gamma$,  and coincides with the number of elements in
  the Weyl group:
    \begin{equation}
      \mid \nu(\Gamma') \mid \quad  = \quad  \mid \nu(\Gamma) \mid \quad = \quad \mid W \mid.
      \qedhere
    \end{equation}
\end{corollary}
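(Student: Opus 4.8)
The plan is to reduce the statement to two facts already in hand: the simply transitive action of the Weyl group $W$ on the bases of the root system attached to the Dynkin diagram $\Gamma$, and the invertibility of the transition map $F$ connecting $\widetilde\Gamma$-sets with $\Gamma$-sets.

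First I would record the equality $\nu(\Gamma) = |W|$. Since $\Gamma$ is the Dynkin diagram of the homogeneous class, the partial Cartan matrix $B_\Gamma$ is the ordinary Cartan matrix, and a $\Gamma$-set is exactly a base (simple system) of the root system $\varPhi$ associated with $\Gamma$. By \cite[Theorem 1.4]{9} the group $W$ acts simply transitively on the collection of all such bases, so this collection is in bijection with $W$ and $\nu(\Gamma) = |W|$.

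Next I would fix an arbitrary $\Gamma' \in C(\Gamma)$, put $\widetilde\Gamma := \Gamma'$, and use $F$ to transport the count. By \eqref{eq_F_and_Fmin1} and \eqref{eq_F_and_Fmin1_2} the matrix $F$ is a fixed invertible matrix (a product of invertible $M_I(n)$ and $L_{\tau_i}$) that sends \emph{every} $\widetilde\Gamma$-set to a $\Gamma$-set, while $F^{-1}$ sends every $\Gamma$-set back to a $\widetilde\Gamma$-set. Hence $F$ restricts to a map from the collection of $\widetilde\Gamma$-sets to the collection of $\Gamma$-sets; it is injective because $F$ is an injective linear map, and surjective because each $\Gamma$-set $S$ has preimage $F^{-1}S$, which is a $\widetilde\Gamma$-set. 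Therefore $F$ is a bijection between the two collections and $\nu(\Gamma') = \nu(\Gamma)$.

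Combining the two steps gives $\nu(\Gamma') = \nu(\Gamma) = |W|$ for every $\Gamma' \in C(\Gamma)$; in particular the value is independent of the representative chosen inside the class, which is the asserted constancy. The one point that must be handled with care is that $F$ be well defined as a map between the whole \emph{collections} of root subsets, not merely on a single chosen pair; but this is precisely the content of \eqref{eq_F_and_Fmin1}, since $F$ and $F^{-1}$ are fixed matrices that do not depend on the sets involved. I therefore expect no genuine obstacle: all the substance has already been absorbed into Theorem \ref{th_conjugate} and Lemma \ref{lem_commute}, and the corollary is a counting consequence of the invertibility of $F$ together with the classical simply transitive action of $W$ on bases.
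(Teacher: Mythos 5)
Your argument is correct. The paper itself states this corollary without any proof, treating it as immediate from Theorem \ref{th_conjugate}, the preceding corollary, and Lemma \ref{lem_commute}; your write-up supplies exactly the counting argument that is left implicit there, namely the bijection between the collection of $\Gamma'$-sets and the collection of $\Gamma$-bases induced by the invertible matrix $F$, combined with the simply transitive action of $W$ on bases. The only step that deserves a flag is surjectivity of $F$ on the level of collections: you take it from the paper's assertion \eqref{eq_F_and_Fmin1} that $F^{-1}$ carries every $\Gamma$-set to a $\widetilde\Gamma$-set, which is stated but not separately argued in the text. If one prefers not to lean on that, surjectivity also follows from transitivity of $W$ on bases together with Lemma \ref{lem_commute}: given a $\Gamma$-set $S$, choose any $\widetilde\Gamma$-set $\widetilde{S}_0$ with $F\widetilde{S}_0 = S_0$, pick $w \in W$ with $wS_0 = S$, and observe that $w\widetilde{S}_0$ is again a $\widetilde\Gamma$-set with $F(w\widetilde{S}_0) = wF\widetilde{S}_0 = S$. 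Either way the conclusion $\nu(\Gamma') = \nu(\Gamma) = |W|$ stands, and your route has the additional merit of sidestepping any discussion of stabilizers of $\Gamma'$-sets.
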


\subsection{Proof of Theorem \ref{th_invol}} \label{sect_proof_th}

\begin{enumerate}[(i)]

    \item
    Let us construct the matrix $M_I$ for every pair $\{\widetilde\Gamma, \Gamma \}$ in the adjacency list \eqref{eq_pairs_trans}.
    
    \begin{enumerate}[(1)]
        \item 
        Pair $\{D_4(a_1), D_4\}$: $M_I$ maps $D_4(a_1)$-set to $D_4$-set.
        
        \begin{itemize}
            \item Mapping: $M_I \widetilde\alpha_3 = \alpha_3 = -(\alpha_1  + \alpha_2 + \widetilde\alpha_3)$.

            \item Root system: $S = \{\alpha_2, \alpha_1, \widetilde\alpha_3 \}$ and $\varPhi$ is a root system of type $A_3$.

            \item Minimal root: $\alpha_3$ is the minimal root in $\varPhi$.

            \item Eliminated edges: $\{\widetilde\alpha_3, \alpha_1 \}$ and $\{\widetilde\alpha_3, \alpha_4\}$.

            \item Emerging edge: $\{\alpha_3, \alpha_2 \}$.

            \item Checking relations:
            \[
                \begin{cases}
                    \alpha_3 \perp \alpha_1  \quad & (\alpha_3 \text{ is the minimal root} ) \\
                    \alpha_3 \perp \alpha_4  \quad & (\alpha_2 + \widetilde\alpha_3 \perp \alpha_4 ) \\
                    (\alpha_3, \alpha_2) = -1 \quad & (\alpha_3 \text{ is the minimal root} )
                  \end{cases}
            \]
        \end{itemize}

        \item 
        Pair $\{D_l(a_k), D_l\}$: $M_I$ maps $D_l(a_k)$-set to $D_l$-set.
        
        \begin{itemize}
            \item Mapping:
            \[
                M_I\overline{\tau}_{k+1} =
                \begin{cases}
                    \beta_2 = -(\tau_1 + \tau_2 + \overline{\tau}_2), 
                    & \textup{ for } k = 1, \\
                    \beta_{k+1} = -(\tau_1 + 2\sum_{i=2}^k \tau_i + \tau_{k+1} + \overline{\tau}_{k+1}),
                    & \textup{ for } k \ge 2.
                \end{cases}
            \]

            \item Root systems:
            \begin{gather*}
                \begin{cases}
                    S_1 = \{\tau_1, \dots, \tau_k, \tau_{k+1}, \overline{\tau}_{k+1} \},
                    & \textup{for } k \ge 2, \\
                    S_2 = \{\tau_1, \tau_2, \overline{\tau}_2 \},
                    & \textup{for } k = 1,
               \end{cases}
               \\
                \begin{cases}
                    \varPhi(S_1) \textup{ is a root system of type } D_{k+2}, 
                    & \textup{for } k \ge 2, \\
                    \varPhi(S_2) \textup{ is a root system of type } A_3,
                    & \textup{for } k = 1.
               \end{cases}
            \end{gather*}
                
            \item Minimal roots: $\beta_{k+1}$ is the minimal root in $\varPhi(S_1)$ and $\beta_2$ is the minimal root in $\varPhi(S_2)$.
                
            \item Eliminated edges: $\{\overline{\tau}_{k+1}, \tau_k \}$ and $\{\overline{\tau}_{k+1}, \tau_{k+2}\}$ .

            \item Emerging edge: $\{\beta_{k+1}, \tau_2 \}$.

\enlargethispage{.62cm}

            \item Checking relations: for $k \ge 2$,
            \[\begin{cases}
                \beta_{k+1} \perp \tau_i  \ (1 \leq i \leq k+1, i \neq 2) 
                 & (\beta_{k+1} \textup{ is the minimal root\footnotemark[1]})
                \\
                \beta_{k+1} \perp \tau_{k+2} 
                 & ((\beta_{k+1}, \tau_{k+2}) = (\tau_{k+1} + \overline{\tau}_{k+1}, \tau_{k+2}) = 0) 
                \\
                \beta_{k+1} \perp \tau_i \ (i > k+2)
                 & (\textup{disconnected\footnotemark[2]}) \\
                (\beta_{k+1}, \tau_2) = -1
                 & (\beta_{k+1} \textup{ is the minimal root}),
                \end{cases}
            \]
            \footnotetext[1]{Hereinafter, the reason for the relation is indicated in parentheses.}
            \footnotetext[2]{Let $\alpha$ be the sum of several roots $\alpha_i$: $\alpha = \sum\alpha_i$. Hereinafter, the line ``$\alpha \perp \beta$ (disconnected)'' means the case, where each summand $\alpha_i$ in $\alpha$ is orthogonal to $\beta$.}
            and for $k = 1$,
            \[\begin{cases}
                \beta_2 \perp \tau_1, \quad (\beta_2, \tau_2) = -1
                 & (\beta_2 \text{ is the minimal root}) \\
                \beta_2 \perp \tau_i \quad (i > 3)
                 & (\textup{disconnected}).
                \end{cases}
            \]
            
        \end{itemize}

        \item 
        Pair $\{E_6(a_1), E_6\}$: $M_I$ maps $E_6(a_1)$-set to $E_6$-set.
    
        \begin{itemize}
            \item Mapping: $M_I\widetilde\beta_3 = \beta_3 = -(\alpha_1 + \beta_1  + \alpha_3 + \widetilde\beta_3)$.
            
            \item Root system: $S = \{\alpha_1, \beta_1, \alpha_3, \widetilde\beta_3\}$ and $\varPhi$ is a root system of type $A_4$.
            
            \item Minimal root: $\beta_3$ is the minimal root in $\varPhi$.
            
            \item Eliminated edges: $\{\widetilde\beta_3, \alpha_3\}$, $\{\widetilde\beta_3, \alpha_2\}$.
            
            \item Emerging edge: $\{\beta_3, \alpha_1\}$.
            
            \item Checking relations:
            \[\begin{cases}
                \beta_3 \perp \alpha_3, \beta_1 & (\beta_3 \text{ is the minimal root}) \\
                \beta_3 \perp \alpha_2 & (\widetilde\beta_3 + \beta_1 \perp \alpha_2) \\
                \beta_3 \perp \beta_2 & (\text{disconnected}) \\
                (\beta_3, \alpha_1) = -1 & (\beta_3 \text{ is the minimal root})
            \end{cases}\]
        \end{itemize}

        \begin{figure}
            \centering
            \includegraphics[scale=0.45]{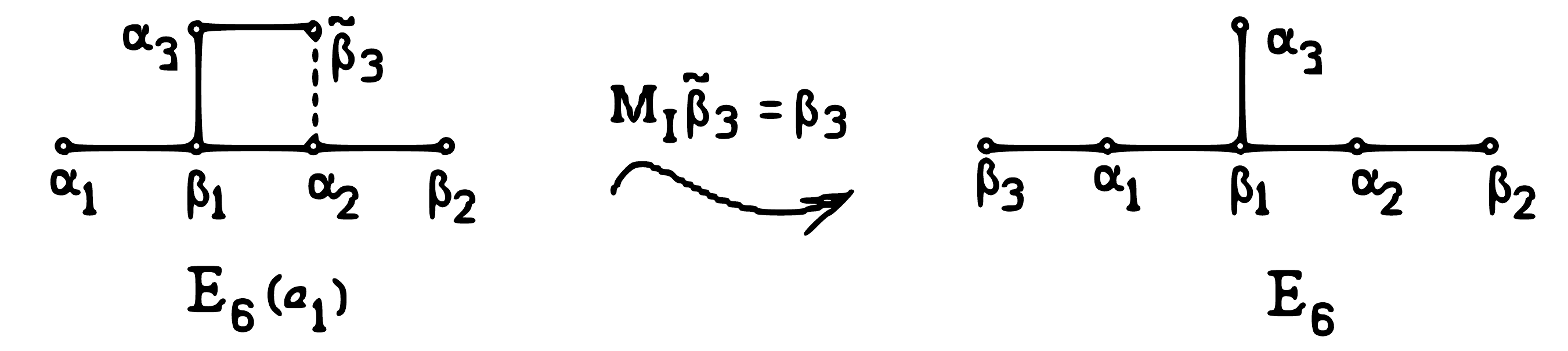}
            \caption{Case (3). Mapping $M_I: E_6(a_1) \longmapsto E_6$.}
            \label{map_E6a1_to_E6}
        \end{figure}

        \item 
        Pair $\{E_6(a_2), E_6(a_1)\}$: $M_I$ maps $E_6(a_2)$-set to $E_6(a_1)$-set.

        \begin{itemize}
            \item Mapping: $M_I\widetilde\beta_2 = \beta_2 = -(\alpha_3 + \beta_1 + \alpha_2 + \widetilde\beta_2)$.
            
            \item Root systems: $S = \{\alpha_3, \beta_1, \alpha_2, \widetilde\beta_2 \}$ and $\varPhi(S)$ is a root system of type $A_4$.
            
            \item Minimal roots: $\beta_2$ is the minimal root in $\varPhi(S)$.
            
            \item Eliminated edges: $\{\widetilde\beta_2, \alpha_3 \}$ and $\{\widetilde\beta_2, \alpha_1\}$.
            
            \item Emerging edges: $\{\beta_2, \alpha_2 \}$.
            
            \item Checking relations:
            \[\begin{cases}
               \beta_2 \perp \alpha_3, \beta_1 \quad & (\beta_2 \text{ -- minimal root}) \\
               \beta_2 \perp \alpha_1, \widetilde\beta_3
               \quad & (\widetilde\beta_2 + \beta_1 \perp \alpha_1 \text{ and }  \alpha_2 + \alpha_3 \perp \widetilde\beta_3 ) \\
               (\beta_2, \alpha_2) = -1 \quad & (\beta_2 \text{ -- minimal root})
         \end{cases}\]
        \end{itemize}

        \begin{figure}[!h]
            \centering
            \includegraphics[scale=0.45]{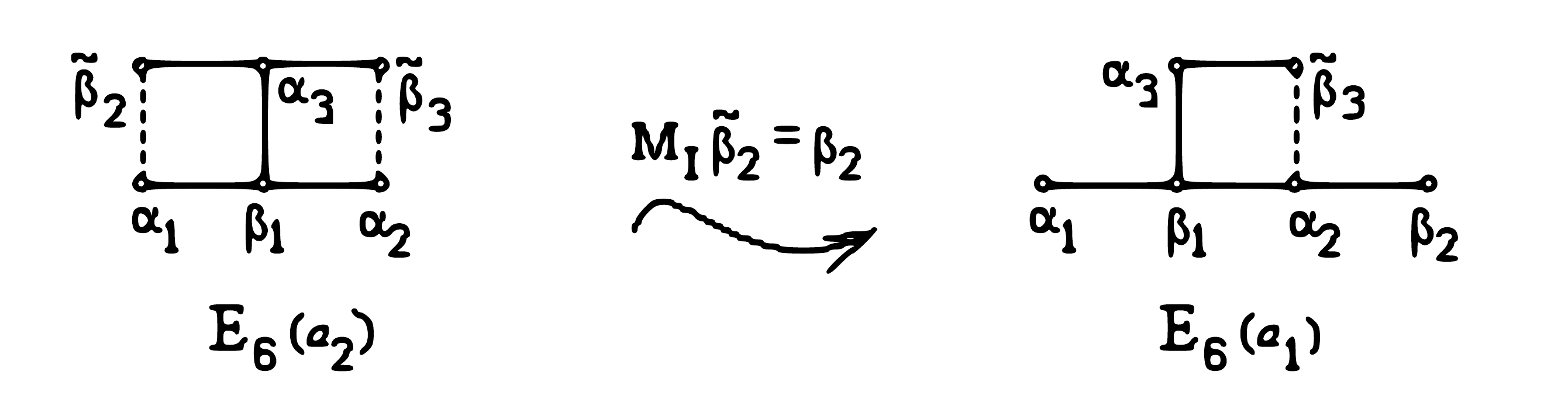}
            \caption{ Case (4). Mapping $M_I: E_6(a_2) \longmapsto E_6(a_1)$.}
            \label{map_E6a2_to_E6a1}
        \end{figure}

        \begin{table}
            \centering
            \begin{tabular}{|c|c|c|}
                                \hline
                 $\begin{array}{c}
                   {\bf D_4} \\
                 \end{array}$
                 &
                 $\begin{array}{c}
                  \includegraphics[scale=0.3]{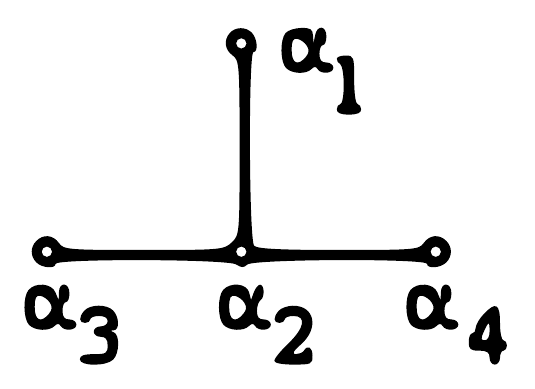} \\
                  \end{array}$       
                  &
                  $\begin{array}{c}
                  S = \{\alpha_1, \alpha_2, \alpha_3, \alpha_4 \} \\
                  \end{array}$
                  \\ \hline
                 $\begin{array}{c}
                 \\
                 \footnotesize{(1)}
                 \\
                 \\
                 \footnotesize{\bf D_4(a_1)}
                 \\
                 \end{array}$
                 &
                 $\begin{array}{c}
                   \\
                   \footnotesize{\{\widetilde\Gamma, \Gamma\} = \{D_4(a_1), D_4\}} \\ \\
                  \includegraphics[scale=0.3]{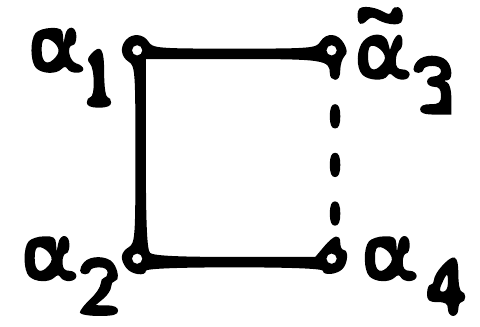}
                  \end{array}$       
                  &
                  \footnotesize
                  $\begin{array}{l}
                  \\
                   \widetilde{S} = \{\alpha_1, \alpha_2, \widetilde\alpha_3, \alpha_4\}, \\
                   M_I\tau = \tau \text{ for } \tau \in  \{ \alpha_1, \alpha_2, \alpha_4\}, \\
                   M_I\widetilde\alpha_3 = \alpha_3 = -(\alpha_1 + \alpha_2 + \widetilde\alpha_3)
                   \\
                   \end{array}$ 
                \\ \hline
                 \footnotesize
                 $\begin{array}{c}
                   {\bf D_l} \\
                 \end{array}$
                 &
                  $\begin{array}{c}
                  \includegraphics[scale=0.3]{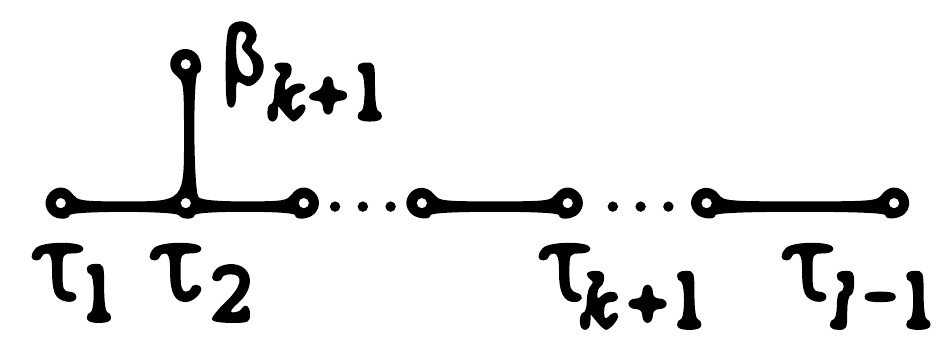} \\
                  \end{array}$  &
                  $\begin{array}{c}
                     S = \{\tau_1, \tau_2, \dots, \tau_{l-1}, \beta_{k+1}\} \\
                  \end{array}$
                     \\
                \hline
                 $\begin{array}{c}
                 \\
                 \footnotesize{(2)}
                 \\
                  \\
                   \footnotesize{\bf D_l(a_k)} \\
                  \\
                 \end{array}$
                 &
                 $\begin{array}{c}
                   \{\widetilde\Gamma, \Gamma\} = \{D_l(a_k), D_l\} \\
                   \\
                  \includegraphics[scale=0.3]{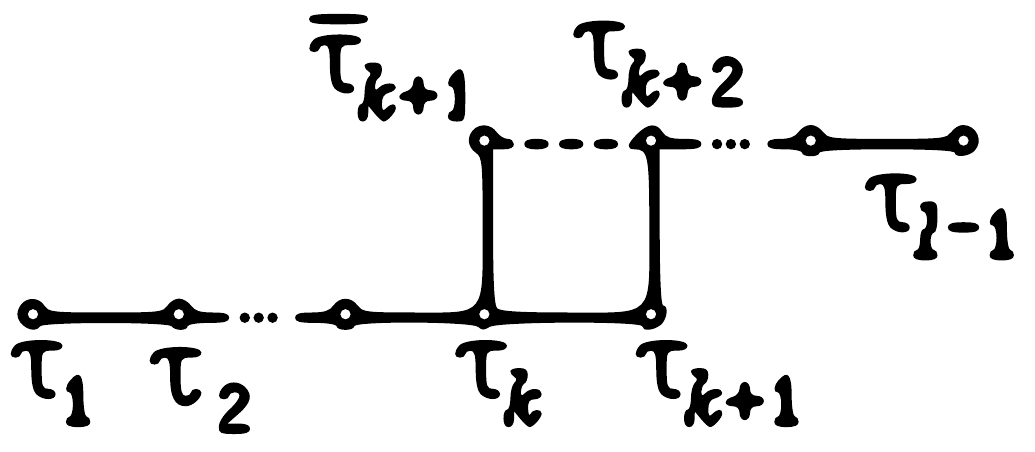} \\
                  \end{array}$  &
                 $\begin{array}{l}
                  \\
                  \widetilde{S} = \{\tau_1, \dots, \tau_k, \tau_{k+1},
                       \overline\tau_{k+1}, \tau_{k+2}\dots, \tau_{l-1}\},  \\
                   M_I\tau = \tau \text{ for } \tau \in  \{ \tau_1, \tau_2, \dots, \tau_{l-1}\}, \\
                   M_I\overline{\tau}_{k+1} = \\
                      \begin{cases}
                         \beta_{k+1} =
                           -(\tau_1 + 2\sum\limits_{i=2}^k\tau_i +  \tau_{k+1} + \overline{\tau}_{k+1}), \\
                           \qquad\qquad\qquad\qquad\qquad k \geq 2, \\
                          \beta_2 = -(\tau_1 + \tau_2 + \overline{\tau}_2).
                      \end{cases}
                      \\
                 \end{array}$ \\
                 & & \\
                \hline
                $\begin{array}{c}
                {\bf E_6} \\
                \end{array}$
                &
                $\begin{array}{c}
                \includegraphics[scale=0.3]{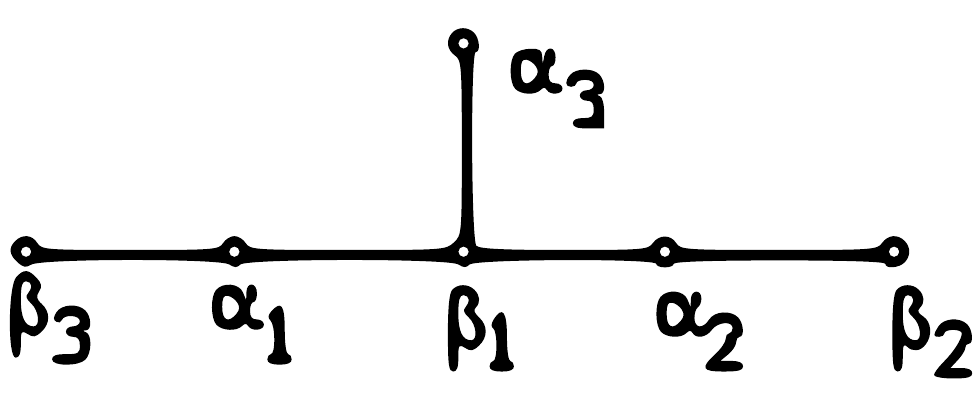} \\
                \end{array}$ &
                $\begin{array}{c}
                \\
                S = \{\alpha_1, \alpha_2, \alpha_3, \beta_1, \beta_2, \beta_3 \}  \\ \\
                \end{array}$
                \\
                \hline
                $\begin{array}{c}
                 \\
                 \footnotesize{(3)}
                 \\
                \\
                \footnotesize{\bf E_6(a_1)}
                \\
                \end{array}$
                &
                $\begin{array}{c}
                \\
                \footnotesize{\{\widetilde\Gamma, \Gamma\} = \{E_6(a_1), E_6\}}
                \\ \\
                \includegraphics[scale=0.3]{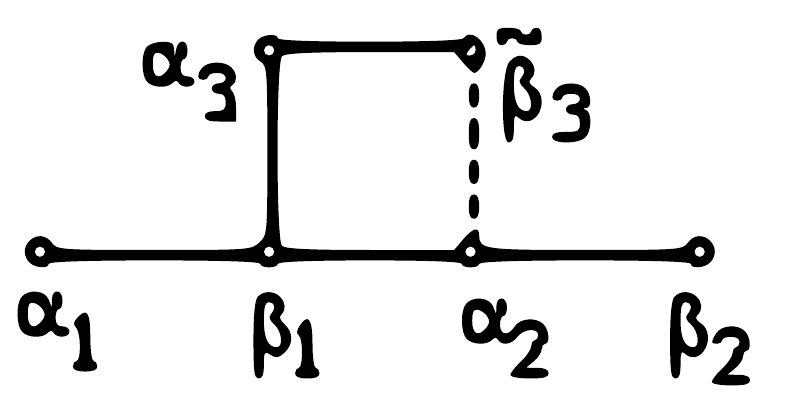} \\
                \end{array}$ &
                $\begin{array}{l}
                \widetilde{S} = \{\alpha_1, \alpha_2, \alpha_3, \beta_1, \beta_2, \widetilde\beta_3 \}, \\
                M_I\tau = \tau \text{ for } \tau \in \{\alpha_1, \alpha_2, \alpha_3, \beta_1, \beta_2\}, \\
                M_I\widetilde\beta_3 = \beta_3 = -(\alpha_1 + \beta_1  + \alpha_3 + \widetilde\beta_3)
                \end{array}$  \\
                \hline
                $\begin{array}{c}
                 \\
                 \footnotesize{(4)}
                 \\
                \\
                \footnotesize{\bf E_6(a_2)}
                \\
                \end{array}$
                &
                $\begin{array}{c}
                \\
                \footnotesize{\{\widetilde\Gamma, \Gamma\} = \{E_6(a_2), E_6(a_1) \}} \\ \\
                \includegraphics[scale=0.3]{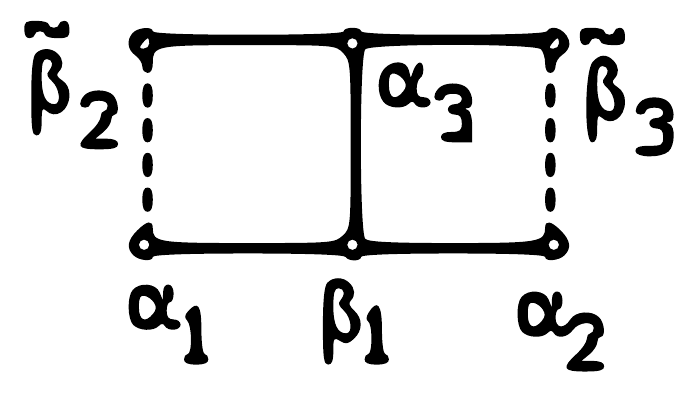} \\
                \end{array}$ &
                $\begin{array}{l}
                \\
                \widetilde{S} = \{\alpha_1, \alpha_2, \alpha_3, \beta_1, \widetilde\beta_2, \widetilde\beta_3 \}, \\
                M_I\tau = \tau \text{ for } \tau \in  \{\alpha_1, \alpha_2, \alpha_3, \beta_1, \widetilde\beta_3 \} \\
                M_I\widetilde\beta_2 = \beta_2 = -(\alpha_3 + \beta_1  + \alpha_2 + \widetilde\beta_2)
                \\
                \end{array}$
                \\
                \hline
            \end{tabular}
            \caption{$\{D_4(a_1), D4\}$, $\{D_l(a_k), D_l\}$, $\{E_6(a_1), E_6\}$, $\{E_6(a_2), E_6(a_1)\}$}
            \label{table:E6.MI2}
        \end{table}

        \item
        Pair $\{E_7(a_1), E_7\}$: $M_I$ maps $E_7(a_1)$-set to $E_7$-set.

        \begin{itemize}
          \item Mapping: $M_I\widetilde\alpha_3 = \alpha_3 = -(\beta_2 + \alpha_2 + \beta_3 + \widetilde\alpha_3)$.
        
          \item Root system:
              $S = \{\beta_2, \alpha_2, \beta_3, \widetilde\alpha_3\}$ and $\varPhi$ is a root system of type $A_4$.
        
          \item Minimal root: $\alpha_3$ is the minimal root in $\varPhi$.
        
          \item Eliminated edges: $\{\widetilde\alpha_3, \beta_1\}$ and $\{\widetilde\alpha_3, \beta_3\}$.
          
          \item Emerging edge: $\{\beta_2, \alpha_3\}$.
        
          \item Checking relations:
              \[\begin{cases}
                  \alpha_3 \perp \alpha_2, \beta_3 \quad & (\alpha_3 \text{ is the minimal root}) \\
                  \alpha_3 \perp \beta_1 \quad & (\widetilde\alpha_3 + \alpha_2 \perp \beta_1 ) \\
                  \alpha_3 \perp \alpha_1, \beta_4 \quad & (\text{disconnected}) \\
                  (\alpha_3, \beta_2) = -1 \quad & (\alpha_3 \text{ is the minimal root})
              \end{cases}\]
        \end{itemize}

        \begin{figure}
            \centering
            \includegraphics[scale=0.45]{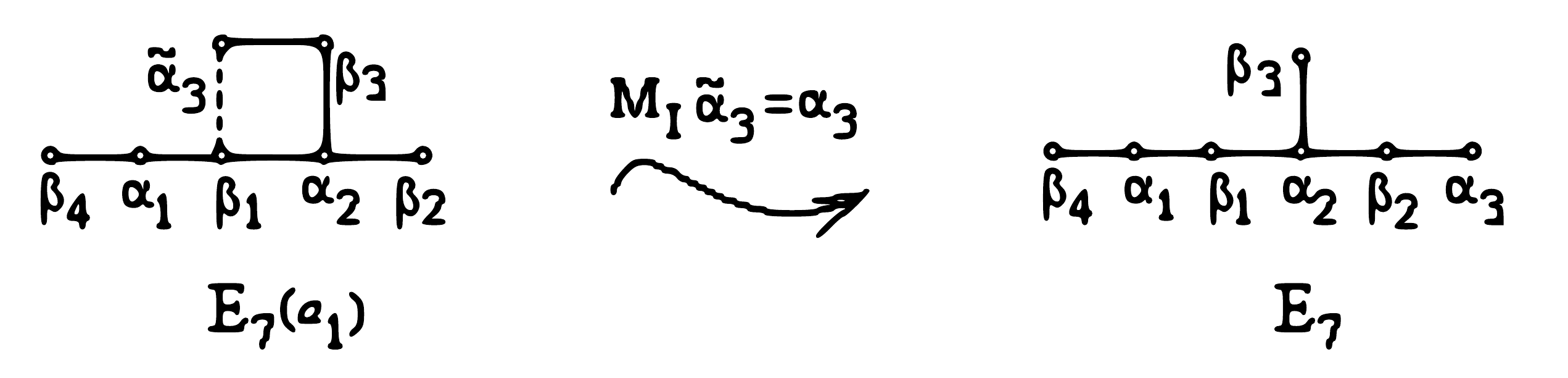}
            \caption{ Case (5). Mapping $M_I: E_7(a_1) \longmapsto E_7$.}
            \label{map_E7a1_to_E7}
        \end{figure}

        \item
        Pair $\{E_7(a_2), E_7\}$: $M_I$ maps $E_7(a_2)$-set to $E_7$-set.

        \begin{itemize}
            \item Mapping: $M_I\widetilde\alpha_1 = \alpha_1 = -(\beta_1 + \alpha_2 + \beta_3 + \widetilde\alpha_1)$.
            
            \item Root system: $S = {\beta_1, \alpha_2, \beta_3, \widetilde\alpha_1}$ and $\varPhi$ is a root system of type $A_4$.
            
            \item Minimal root: $\alpha_1$ is the minimal root in $\varPhi$.
            
            \item Eliminated edges: ${\widetilde\alpha_1, \beta_3}$, ${\widetilde\alpha_1, \beta_2}$ and
            ${\widetilde\alpha_1, \widetilde\beta_4}$.
            
            \item Emerging edges: ${\alpha_1, \beta_1}$ and ${\alpha_1, \widetilde\beta_4}$.
            
            \item Checking relations:
            \[\begin{cases}
                \alpha_1 \perp \alpha_2, \beta_3 & (\alpha_1 \text{ is the minimal root}) \\
                \alpha_1 \perp \beta_2 & (\alpha_2 + \widetilde\alpha_1 \perp \beta_2) \\
                \alpha_1 \perp \alpha_3 & (\text{disconnected}) \\
                (\alpha_1, \beta_1) = -1 & (\alpha_1 \text{ is the minimal root}) \\
                (\beta_4, \alpha_1) = -1 & ((\beta_4, \alpha_1) = (\beta_4, \widetilde\alpha_1))
            \end{cases}\]
        \end{itemize}

        \begin{figure}[!h]
            \centering
            \includegraphics[scale=0.45]{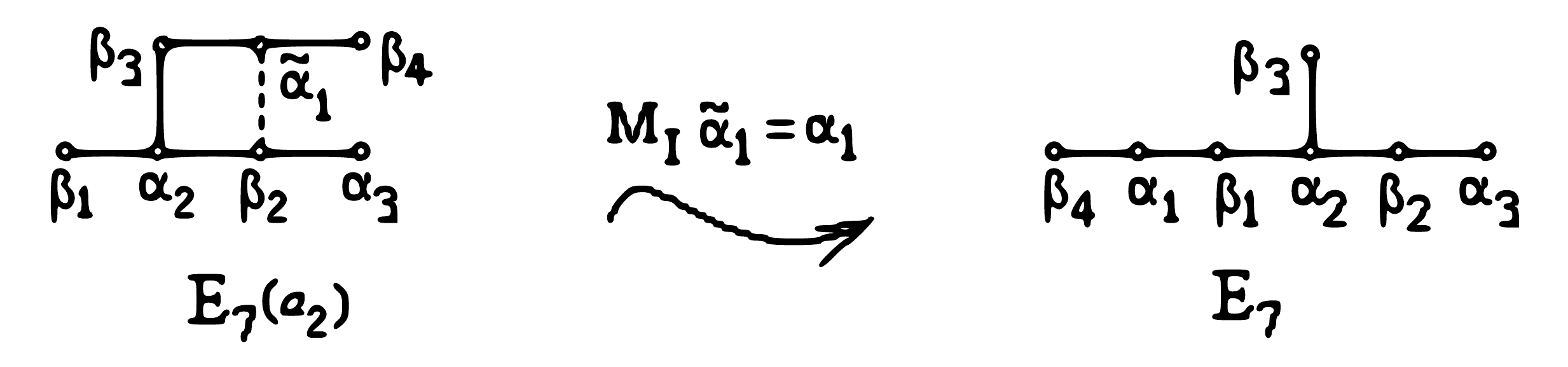}
            \caption{ Case (6). Mapping $M_I: E_7(a_2) \longmapsto E_7$.}
            \label{map_E7a2_to_E7}
        \end{figure}

        \item
        Pair $\{E_7(a_3), E_7(a_1)\}$: $M_I$ maps $E_7(a_3)$-set to $E_7(a_1)$-set.

        \begin{itemize}
            \item Mapping: $M_I\alpha_4 = \beta_4 = -(\alpha_4 + \beta_3 + \alpha_2 + \beta_1 + \alpha_1)$.
            
            \item Root system: $S = \{\alpha_4, \beta_3, \alpha_2, \beta_1, \alpha_1 \}$ and $\varPhi(S)$ is a root system of type $A_5$.
            
            \item Minimal root: $\beta_4$ is the minimal root in $\varPhi(S)$.
            
            \item Eliminated edges: $\{\alpha_4, \beta_3 \}$ and $\{\alpha_4, \beta_2\}$.
            
            \item Emerging edge: $\{\beta_4, \alpha_1\}$.
            
            \item Checking relations:
            \[\begin{cases}
              \beta_4 \perp \beta_1, \alpha_2, \beta_3 \quad & (\beta_4 \text{ is the minimal root}) \\
              \beta_4 \perp \widetilde\alpha_3, \beta_2 \quad & (\beta_1 + \beta_3 \perp  \widetilde\alpha_3 \text{ and } \alpha_2 + \alpha_4 \perp \beta_2) \\
              (\beta_4, \alpha_1) = -1 \quad  & (\beta_4 \text{ is the minimal root})
           \end{cases}\]
        \end{itemize}

       \begin{figure}
            \centering
            \includegraphics[scale=0.45]{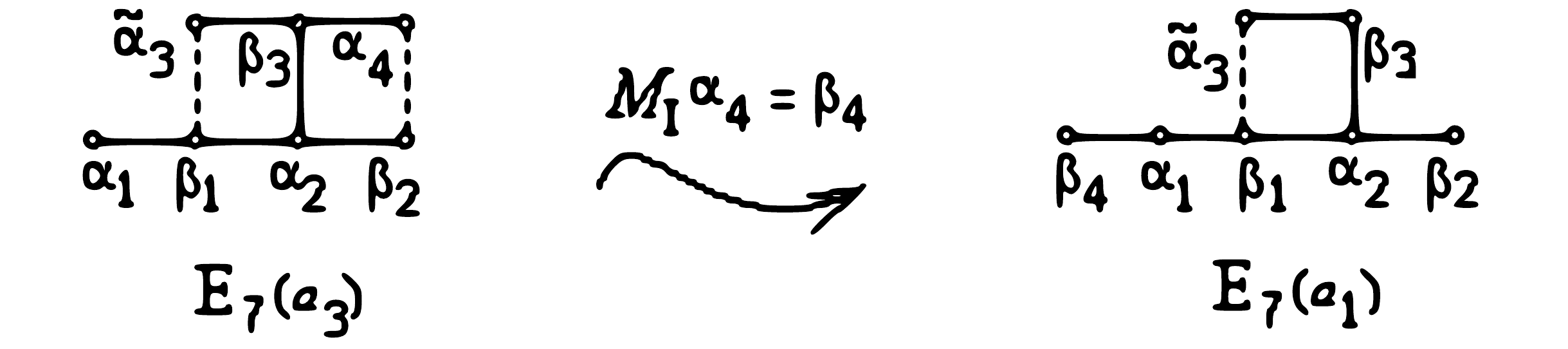}
            \caption{ Case (7). Mapping $M_I: E_7(a_3) \longmapsto E_7(a_1)$. }
            \label{map_E7a3_to_E7a1}
        \end{figure}

        \item
        Pair $\{E_7(a_4), E_7(a_3)\}$: $M_I$ maps $E_7(a_4)$-set to $E_7(a_3)$-set.

        \begin{itemize}
              \item Mapping: $M_I\widetilde\alpha_1 = \alpha_1 = -(2\beta_1 + \widetilde\alpha_1 + \alpha_2 + \widetilde\alpha_3)$.
            
              \item Root system: $S = \{\beta_1, \widetilde\alpha_1, \alpha_2, \alpha_3 \}$ and $\varPhi(S)$ is a root system of type $D_4$.
            
              \item Minimal root: $\alpha_1$ is the minimal root in $\varPhi(S)$.
            
              \item Eliminated edges: $\{\widetilde\alpha_1, \beta_1 \}$ and $\{\widetilde\alpha_1, \beta_2\}$.
            
              \item Emerging edge: $\{\alpha_1, \beta_1\}$.
            
              \item Checking relations:
              \[\begin{cases}
                  \alpha_1 \perp  \alpha_2, \widetilde\alpha_3 \quad  & (\beta_1 \text{ is the minimal root}) \\
                  \alpha_1 \perp \beta_3, \beta_2 \quad & (\alpha_2 + \widetilde\alpha_3 \perp \beta_3 \text{ and } \widetilde\alpha_1 + \alpha_2 \perp \beta_2) \\
                  \alpha_1 \perp \alpha_4 \quad & (\alpha_4 \perp \widetilde\alpha_1,  \widetilde\alpha_3, \alpha_2, \beta_1) \\
                  (\alpha_1, \beta_1) = -1 \quad &(\beta_1 \text{ is the minimal root})
              \end{cases}\]
        \end{itemize}

        \begin{figure}[!h]
            \centering
            \includegraphics[scale=0.4]{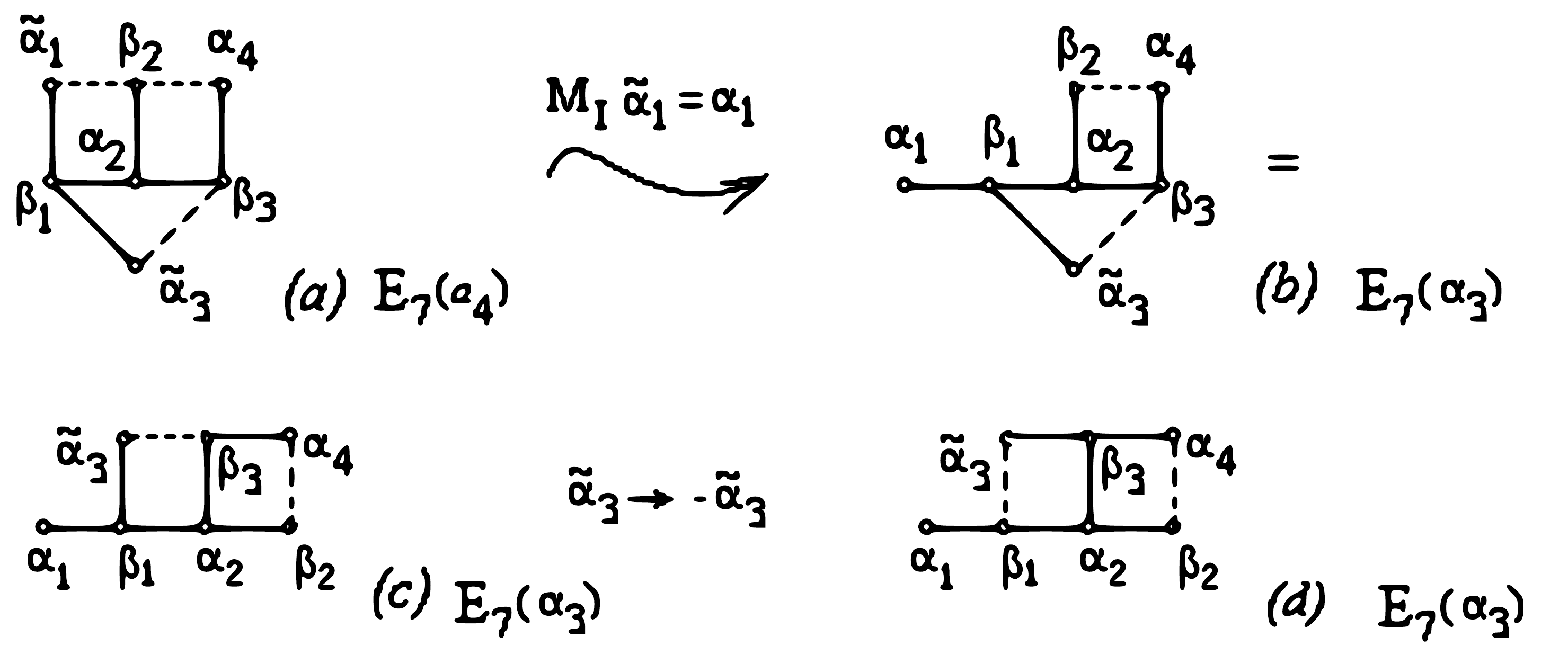}
            \caption{ Case (8). Mapping $M_I: E_7(a_4) \longmapsto E_7(a_3)$ (diagram (c)). Diagrams (c) and (d)  are equivalent
              (by similarity $\widetilde\alpha_3 \longmapsto -\widetilde\alpha_3$) }
            \label{map_E7a4}
        \end{figure}

        \begin{table}
            \centering
            \caption{$\{ E_7(a_1), E_7\}$, $\{ E_7(a_2), E_7\}$,  $\{ E_7(a_3), E_7(a_1)\}$, $\{ E_7(a_4), E_7(a_3)\}$}
            \label{tab_part_root_syst_2}
            \begin{tabular} {|c|c|c|}
            \hline
               & & \\
               & $\widetilde\Gamma$-basis $\widetilde{S}$ and $\Gamma$-basis $S$
                         & Mapping $M_I$ \\
               \hline
              $\begin{array}{c}
               \footnotesize{\bf E_7} \\
             \end{array}$
              &
              $\begin{array}{c}
               \\
               \includegraphics[scale=0.3]{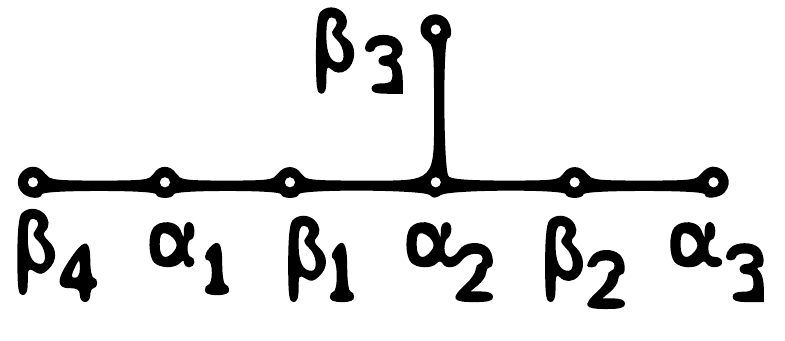}  \\
               \end{array}$
               &
               $\begin{array}{c}
               \\
               S = \{\alpha_1, \alpha_2, \alpha_3, \beta_1, \beta_2, \beta_3, \beta_4 \} \\ \\
               \end{array}$
              \\
            \hline
              $\begin{array}{c}
              \\
              \footnotesize{(5)}
              \\
              \\
               \footnotesize{\bf E_7(a_1)}
              \\
             \end{array}$
              &
              $\begin{array}{c}
                 \\
                 \{\widetilde\Gamma, \Gamma\} = \{E_7(a_1), E_7\} \\ \\
               \includegraphics[scale=0.3]{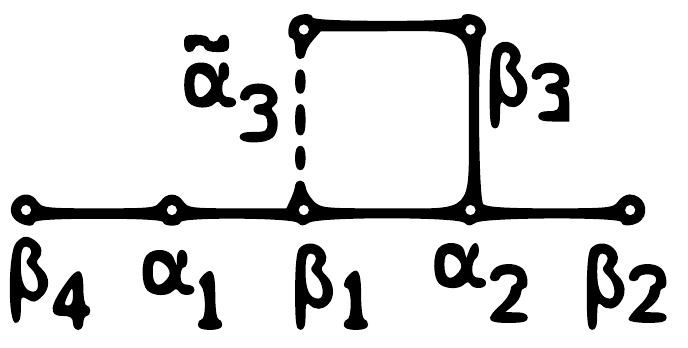}  \\
               \end{array}$
                &
                 $\begin{array}{l}
                 \\
                 \widetilde{S} = \{\alpha_1, \alpha_2, \widetilde\alpha_3, \beta_1, \beta_2, \beta_3, \beta_4 \}, \\
                 M_I\tau = \tau \text{ for } \tau \in \{\alpha_1, \alpha_2, \beta_1, \beta_2, \beta_3, \beta_4\}, \\
                 M_I\widetilde\alpha_3 = \alpha_3 = -(\beta_2 + \alpha_2 + \beta_3 + \widetilde\alpha_3) 
                 \\
                 \end{array}$
                 \\
              \hline
              $\begin{array}{c}
              \\
              \footnotesize{(6)}
              \\
              \\
               \footnotesize{\bf E_7(a_2)}
               \\
             \end{array}$
              &
              $\begin{array}{c}
                 \\
                 \{\widetilde\Gamma, \Gamma\} = \{E_7(a_2), E_7\} \\ \\
              \includegraphics[scale=0.35]{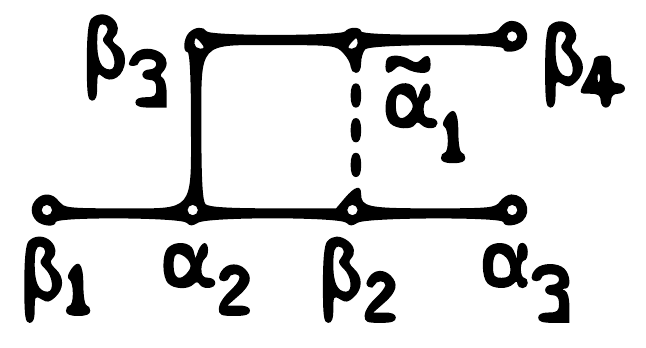} \\
              \end{array}$
              &
               $\begin{array}{l}
                  \\
                 \widetilde{S} = \{\widetilde\alpha_1, \alpha_2, \alpha_3, \beta_1, \beta_2, \beta_3, \beta_4 \},  \\
                 M_I\tau = \tau \text{ for } \tau \in \{\alpha_2, \alpha_3, \beta_1, \beta_2, \beta_3, \beta_4\} \\
                 M_I\widetilde\alpha_1 = \alpha_1 = -(\beta_1 + \alpha_2 + \beta_3 + \widetilde\alpha_1) 
                 \\
                 \end{array}$
                  \\
             \hline
             $\begin{array}{c}
              \\
              \footnotesize{(7)}
              \\
              \\
               \footnotesize{\bf E_7(a_3)}
               \\
             \end{array}$
              &
            $\begin{array}{c}
              \\
              \{\widetilde\Gamma, \Gamma\} = \{E_7(a_3), E_7(a_1)\} \\
              \\
              \includegraphics[scale=0.45]{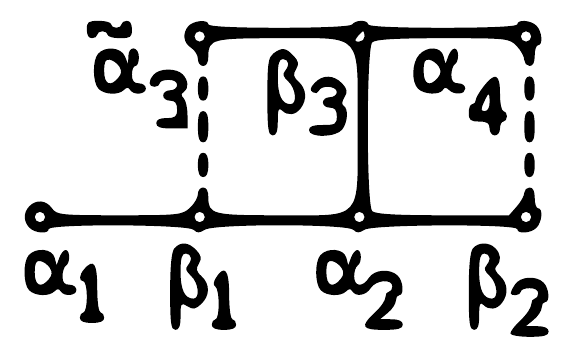} \\
              \end{array}$
              &
               $\begin{array}{l}
               \\
              \widetilde{S} = \{\alpha_1, \alpha_2, \widetilde\alpha_3, \alpha_4, \beta_1, \beta_2, \beta_3 \}, \\
               M_I\tau = \tau \text{ for } \tau \in \{ \alpha_1, \alpha_2, \widetilde\alpha_3,
                        \beta_1, \beta_2, \beta_3\}, \\
               M_I\alpha_4 = \beta_4 =  -(\alpha_4 + \beta_3 + \alpha_2 + \beta_1 + \alpha_1)
               \\
                 \end{array}$   \\
              \hline
              $\begin{array}{c}
              \\
              \footnotesize{(8)}
              \\
              \\
               \footnotesize{\bf E_7(a_4)}
               \\
             \end{array}$
              &
             $\begin{array}{c}
               \\
                 \{\widetilde\Gamma, \Gamma\} = \{E_7(a_4), E_7(a_3)\} \\
               \\
              \includegraphics[scale=0.35]{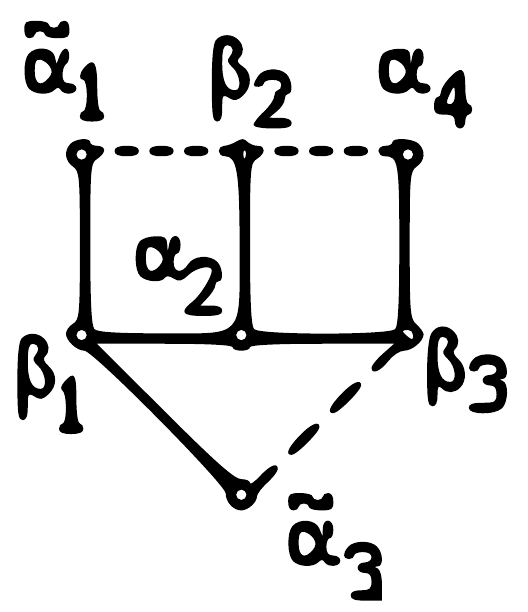} \\
              \end{array}$
               &
                $\begin{array}{l}
                \\
                \widetilde{S} = \{\widetilde\alpha_1, \alpha_2, \widetilde\alpha_3, \alpha_4,
                                    \beta_1, \beta_2, \beta_3 \},  \\
                 M_I\tau = \tau \text{ for } \tau \in \{\alpha_2, \widetilde\alpha_3,\alpha_4, \beta_1, \beta_2, \beta_3\}, \\
                 M_I\widetilde\alpha_1 = \alpha_1 = -(2\beta_1 + \widetilde\alpha_1 + \alpha_2 + \widetilde\alpha_3)
                 \\
                 \end{array}$
                 \\
              \hline
            \end{tabular}
        \end{table}

        \item
        Pair $\{E_8(a_1), E_8\}$: $M_I$ maps $E_8(a_1)$-set to $E_8$-set.

        \begin{itemize}
            \item Mapping: $M_I\widetilde\alpha_3 = \alpha_3 = -(\widetilde\alpha_3 + \beta_3 + \alpha_2 + \beta_2)$.
            
            \item Root system: $S = \{\widetilde\alpha_3, \beta_3, \alpha_2, \beta_2\}$ and $\varPhi$ is a root system of type $A_4$.
            
            \item Minimal root: $\alpha_3$ is the minimal root in $\varPhi$.
            
            \item Eliminated edges: $\{\widetilde\alpha_3, \beta_3\}$ and $\{\widetilde\alpha_3, \widetilde\beta_3\}$.
            
            \item Emerging edge: $\{\beta_2, \alpha_3\}$.
            
            \item Checking relations:
            \[\begin{cases}
                \alpha_3 \perp \alpha_2, \beta_3 & (\alpha_3 \text{ is the minimal root}) \\
                \alpha_3 \perp \beta_1 & (\widetilde\alpha_3 + \alpha_2 \perp \beta_1) \\
                \alpha_3 \perp \alpha_1, \alpha_4, \beta_1 & (\text{disconnected}) \\
                (\alpha_3, \beta_2) = -1 & (\alpha_3 \text{ is the minimal root})
            \end{cases}\]
        \end{itemize}

        \begin{figure}[!h]
            \centering
            \includegraphics[scale=0.45]{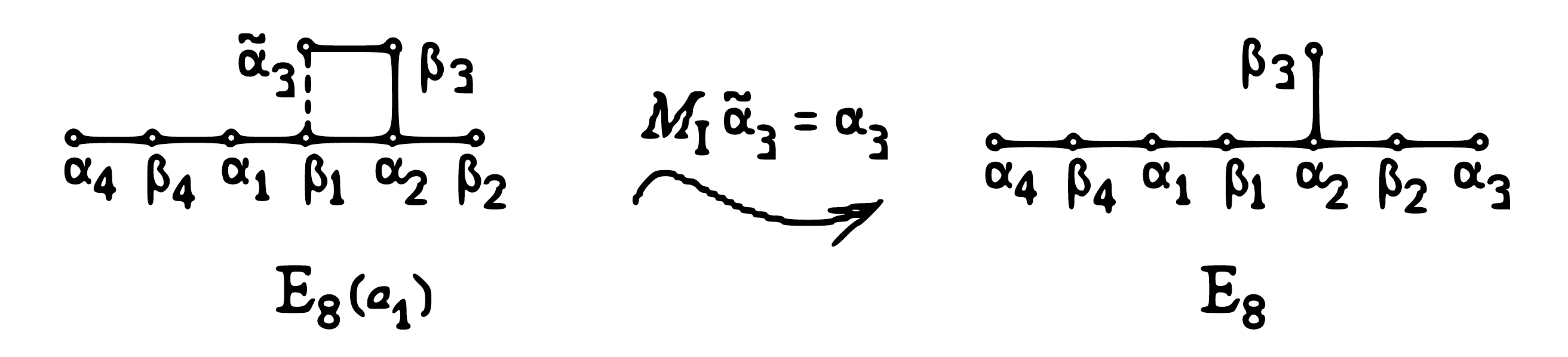}
            \caption{ Case (9). Mapping $M_I: E_8(a_1) \longmapsto E_8$. }
            \label{map_E8a1_to_E8}
        \end{figure}

        \item
        Pair $\{E_8(a_2), E_8\}$: $M_I$ maps $E_8(a_2)$-set to $E_8$-set.
        
        \begin{itemize}
            \item Mapping: $M_I\widetilde\alpha_1 = \alpha_1 = -(\widetilde\alpha_1 + \beta_3 + \alpha_2 + \beta_1)$.

            \item Root system: $S = \{\widetilde\alpha_1, \beta_3, \alpha_2, \beta_1 \}$ and $\varPhi$ is a root system of type $A_4$.

            \item Minimal root: $\alpha_1$ is the minimal root in $\varPhi$.

            \item Eliminated edges: $\{\beta_3, \widetilde\alpha_1\}$, $\{\widetilde\alpha_1, \beta_4\}$ and $\{\widetilde\alpha_1, \beta_2 \}$.

            \item Emerging edges: $\{\alpha_1, \beta_1\}$ and $\{\alpha_1, \beta_4\}$.

            \item Checking relations:
                \[\begin{cases}
                    \alpha_1 \perp \alpha_2, \beta_3 & (\alpha_1 \text{ is the minimal root}) \\
                    \alpha_1 \perp \beta_2 & (\widetilde\alpha_1 + \alpha_2 \perp \beta_2) \\
                    \alpha_1 \perp \alpha_4 &  (\text{disconnected}) \\
                    (\alpha_1, \beta_1) = -1 & (\alpha_1 \text{ is the minimal root}) \\
                    (\beta_4, \alpha_1)  = +1 & ((\beta_4, \alpha_1) = -(\beta_4, \widetilde\alpha_1))
                \end{cases}\]
       \end{itemize}

        \begin{figure}
            \centering
            \includegraphics[scale=0.45]{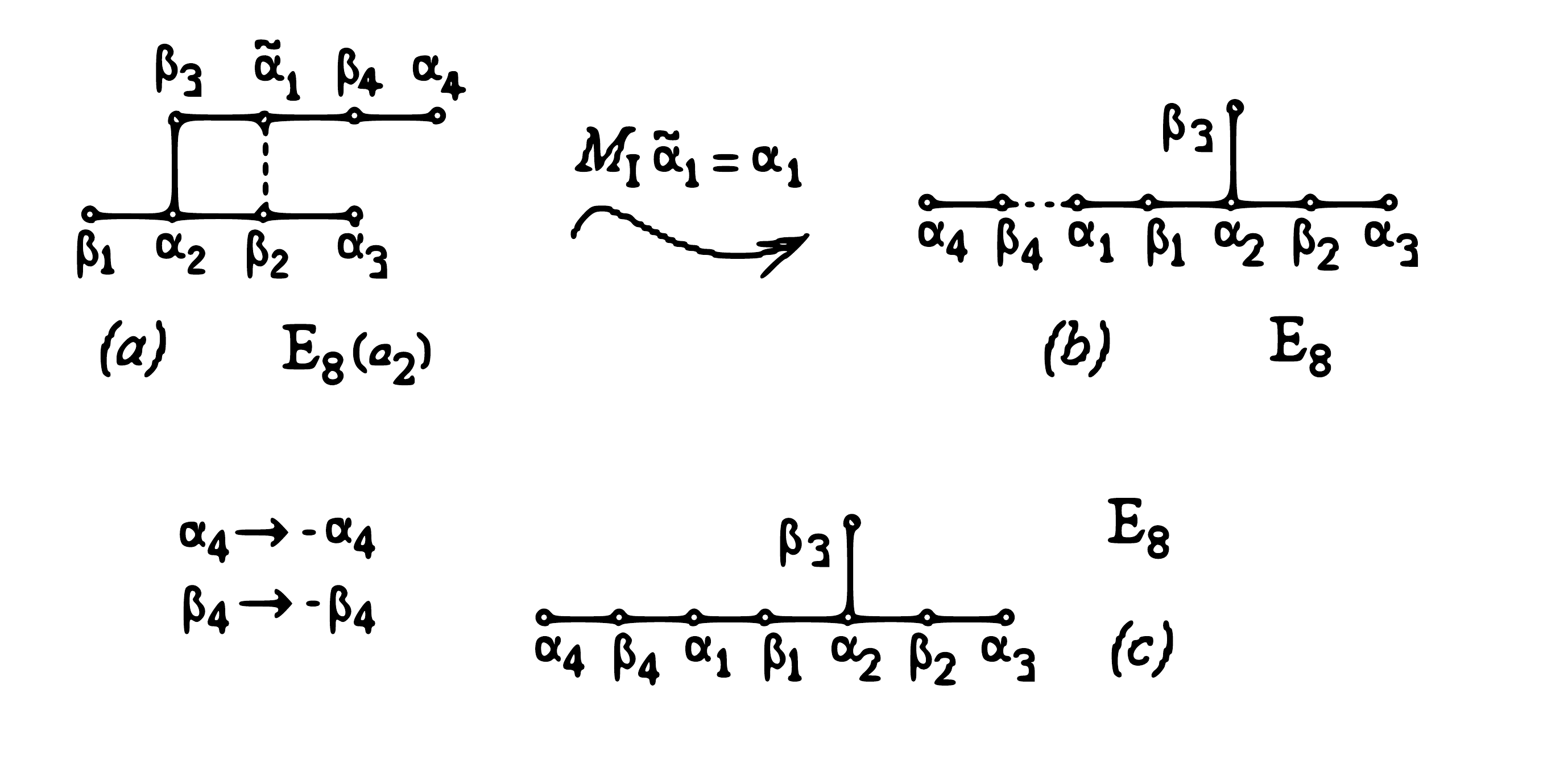}
            \caption{ Case (10). Mapping $M_I: E_8(a_2) \longmapsto E_8$ (diagram (b)). Diagrams (b) and (c) are equivalent
              (by similarity $\alpha_4 \longmapsto -\alpha_4$, $\beta_4 \longmapsto -\beta_4$) }
            \label{map_E8a2_to_E8}
        \end{figure}

        \item
        Pair $\{E_8(a_3), E_8(a_2)\}$: $M_I$ maps $E_8(a_3)$-set to $E_8(a_2)$-set.
        
        \begin{itemize}
            \item Mapping: $M_I\widetilde\alpha_4 = \alpha_4 =
            -(3\widetilde\alpha_1 + 2\beta_2 + 2\beta_3 + 2\beta_4 + \alpha_3  + \widetilde\alpha_4)$.
        
            \item Root system: $S = \{\widetilde\alpha_1, \beta_2, \beta_3, \beta_4, \alpha_3, \widetilde\alpha_4 \}$ and $\varPhi$ is a root system of type $E_6$.
        
            \item Minimal root: $\alpha_4$ is the minimal root in $\varPhi$.  The minimal root $M_I\widetilde\alpha_4 = \alpha_4$ is connected only with $\beta_4$.
        
            \item Eliminated edge: $\{\widetilde\alpha_4, \beta_3\}$.

            \item Emerging edge: $\{\beta_4, \alpha_4\}$.
        
            \item Checking relations:
            \[\begin{cases}
                \alpha_4 \perp \widetilde\alpha_1, \alpha_3, \beta_2, \beta_3 \quad
                  & (\alpha_4 \text{ - minimal root}) \\
                \alpha_4 \perp  \alpha_2  & (\beta_2 + \beta_3 \perp \alpha_2) \\
                \alpha_4 \perp  \beta_1 \quad & (\text{disconnected}) \\
                (\alpha_4, \beta_4) = -1 & (\alpha_4 \text{ - minimal root}) \\
           \end{cases}\]
        \end{itemize}

        \begin{figure}[!h]
            \centering
            \includegraphics[scale=0.45]{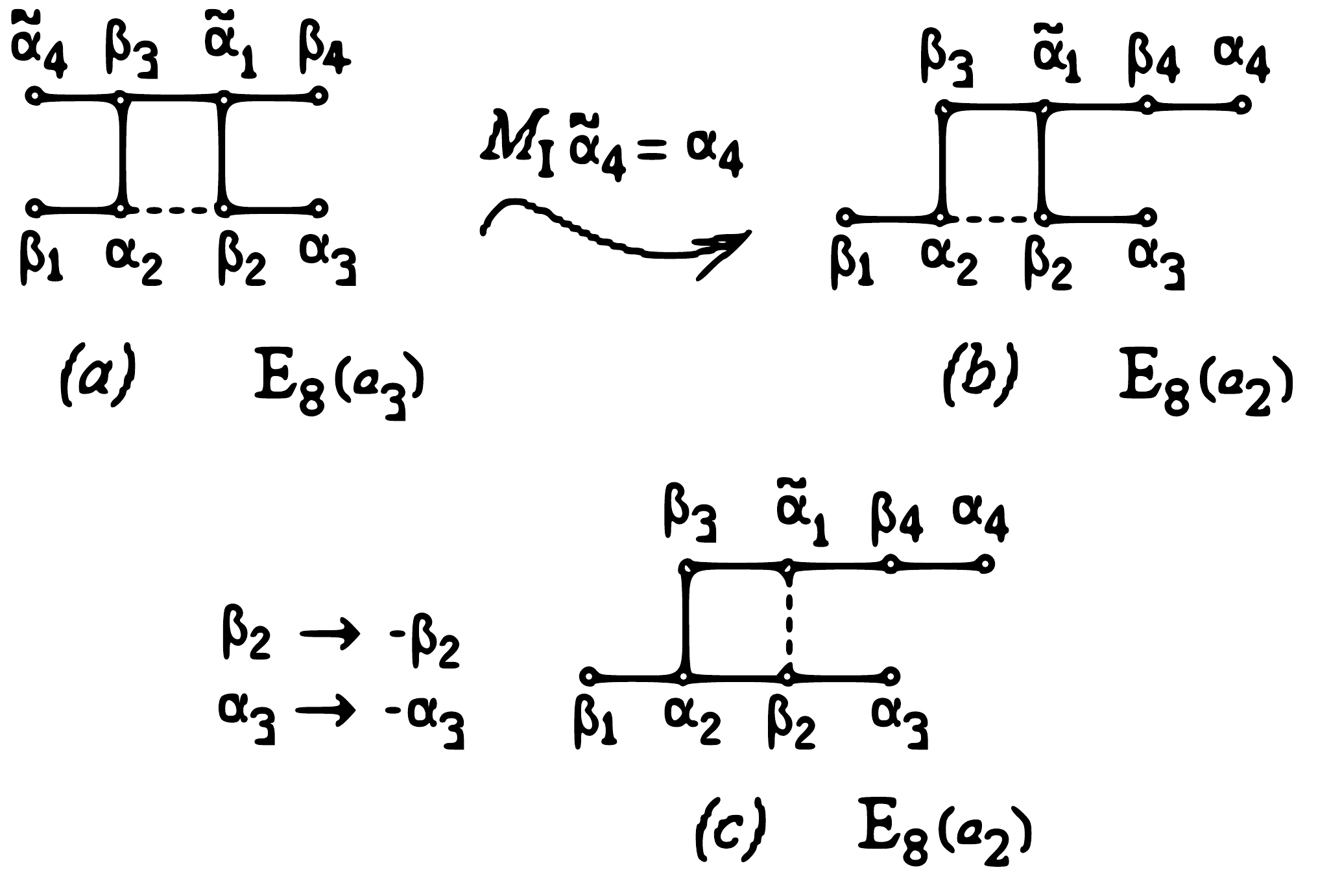}
            \caption{ Case (11). Mapping $M_I: E_8(a_3) \longmapsto E_8(a_2)$ (diagram (b)). Diagrams (b) and (c) are equivalent
            (by similarity $\beta_2 \longmapsto -\beta_2$, $\alpha_3 \longmapsto -\alpha_3$ )}
            \label{map_E8a3}
        \end{figure}

        \item
        Pair $\{E_8(a_4), E_8(a_1)\}$: $M_I$ maps $E_8(a_4)$-set to $E_8(a_1)$-set.

        \begin{itemize}
            \item Mapping: $M_I\widetilde\alpha_4 = \alpha_4 = -(\widetilde\alpha_4 + \beta_3 + \alpha_2 + \beta_1 + \alpha_1 + \beta_4)$.
        
            \item Root systems: $S = \{\widetilde\alpha_4, \beta_3, \alpha_2, \beta_1, \alpha_1, \beta_4\}$ and $\varPhi(S)$ is a root system of type $A_4$.
        
            \item Minimal roots: $\alpha_4$ is the minimal root in $\varPhi(S)$.
        
            \item Eliminated edges: $\{\widetilde\alpha_4, \beta_2\}$ and $\{\widetilde\alpha_4, \beta_3\}$.
        
            \item Emerging edge: $\{\alpha_4, \beta_4\}$.
        
            \item Checking relations:    
            \[\begin{cases}
                \alpha_4 \perp \beta_3, \alpha_2, \beta_1, \alpha_1 \quad (\alpha_4 \text{ - minimal root}) \\
                \alpha_4 \perp \widetilde\alpha_3, \beta_2 \quad (\beta_3 + \beta_1 \perp \widetilde\alpha_3, \text{ and } \widetilde\alpha_4 + \alpha_2 \perp \beta_2) \\
                (\alpha_4, \beta_4) = -1 \quad (\alpha_4 \text{ - minimal root})
            \end{cases}\]
        \end{itemize}

        \begin{figure}[!ht]
            \centering
            \includegraphics[scale=0.45]{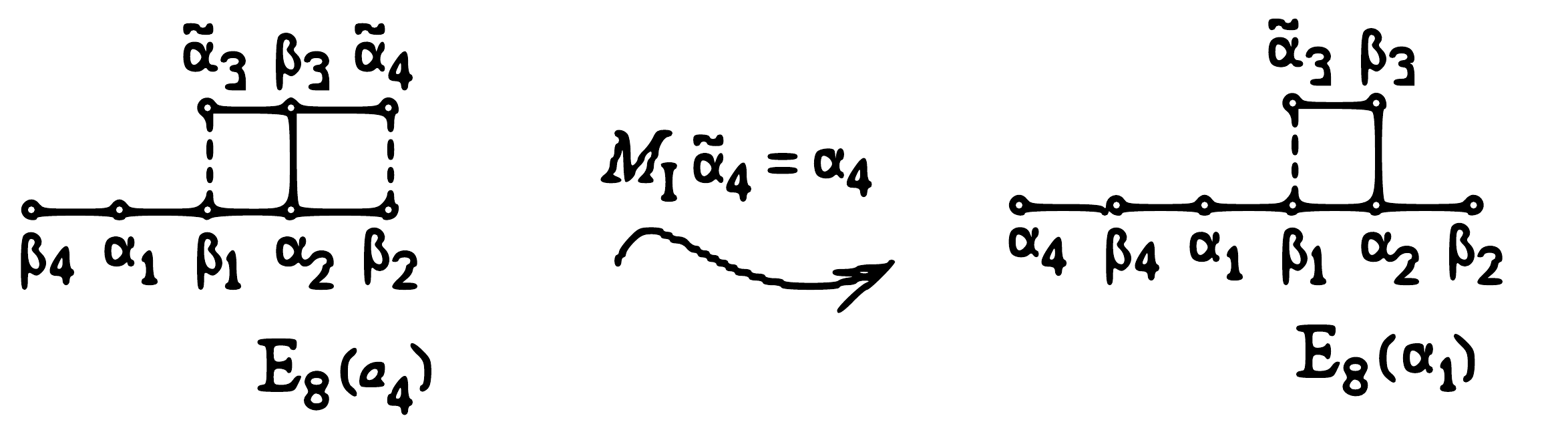}
            \caption{ Case (12). Mapping $M_I: E_8(a_4) \longmapsto E_8(a_1)$.}
            \label{map_E8a4_to_E8a1}
        \end{figure}

        \begin{table}
            \centering
            \caption{$\{ E_8(a_1), E_8\}$, $\{ E_8(a_2), E_8\}$,  $\{ E_8(a_3), E_8(a_2) \}$, $\{ E_8(a_4), E_8(a_1)\}$}
            \label{tab_part_root_syst_3}
            \begin{tabular} {|c|c|c|}
            \hline
             & & \\
             & $\widetilde\Gamma$-basis $\widetilde{S}$ and $\Gamma$-basis $S$
                         & Mapping $M_I$ \\
             & & \\
           \hline
              $\begin{array}{c}
               \footnotesize{\bf E_8} \\
             \end{array}$
               &
               $\begin{array}{c}
               \includegraphics[scale=0.35]{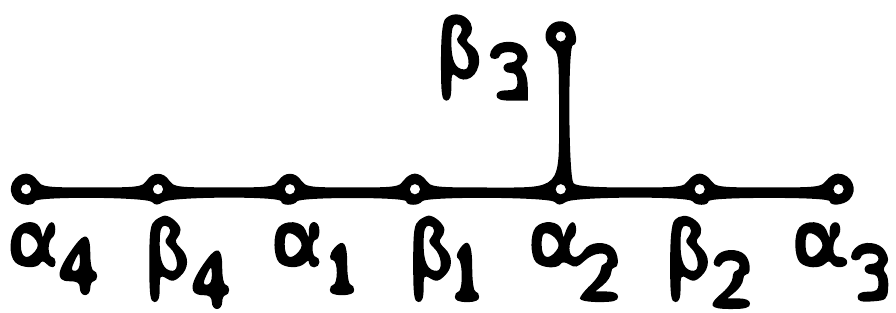}  \\
               \end{array}$
               &
               $\begin{array}{c}
               \\
               S = \{\alpha_1, \alpha_2, \alpha_3, \alpha_4, \beta_1, \beta_2, \beta_3, \beta_4 \} \\ \\
               \end{array}$
               \\
              \hline
              $\begin{array}{c}
              \\
              (9)
              \\
              \\
              {\bf E_8(a_1)}
              \\
             \end{array}$
              &
              $\begin{array}{c}
                 \\
                 \{\widetilde\Gamma, \Gamma\} = \{E_8(a_1), E_8\} \\
                 \\
               \includegraphics[scale=0.35]{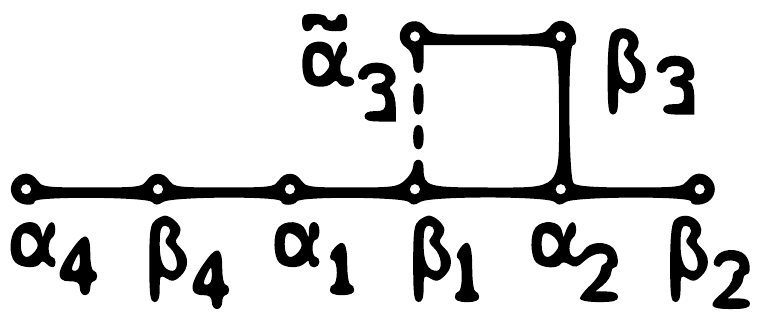}  \\
               \\
               \end{array}$
                &
                 $\begin{array}{l}
                 \\
                 \widetilde{S} = \{\alpha_1, \alpha_2, \widetilde\alpha_3, \alpha_4, \beta_1, \beta_2, \beta_3, \beta_4 \}, \\
                 M_I\tau = \tau \text{ for } \tau \in \{\alpha_1, \alpha_2, \alpha_4, \beta_1, \beta_2, \beta_3, \beta_4 \} \\
                 M_I\widetilde\alpha_3 = \alpha_3 = -(\widetilde\alpha_3 + \beta_3 + \alpha_2 + \beta_2)
                 \\
                 \end{array}$
                 \\
              \hline
              $\begin{array}{c}
              \\
              \footnotesize{(10)}
              \\
              \\
              \footnotesize{\bf E_8(a_2)}
              \\
             \end{array}$
              &
              $\begin{array}{c}
                 \\
                 \{\widetilde\Gamma, \Gamma\} = \{E_8(a_2), E_8\} \\ \\
               \includegraphics[scale=0.35]{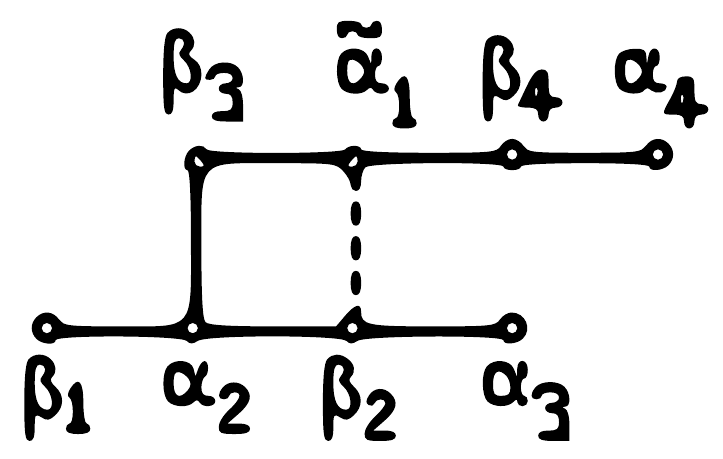}  \\
               \\
               \end{array}$
                &
                 $\begin{array}{l}
                 \\
                 \widetilde{S} = \{\widetilde\alpha_1, \alpha_2, \alpha_3, \alpha_4, \beta_1,
                                 \beta_2, \beta_3, \beta_4 \}, \\
                 M_I\tau = \tau \text{ for } \tau \in \{\alpha_2, \alpha_3, \alpha_4, \beta_1, \beta_2, \beta_3, \beta_4\} \\
                 M_I\widetilde\alpha_1 = \alpha_1 = -(\widetilde\alpha_1 + \beta_3 + \alpha_2 + \beta_1)
                 \\
                 \end{array}$
                 \\
              \hline
              $\begin{array}{c}
              \\
              \footnotesize{(11)}
              \\
              \\
               \footnotesize{\bf E_8(a_3)} \\
             \end{array}$
              &
              $\begin{array}{c}
                \\
                 \{\widetilde\Gamma, \Gamma\} = \{E_8(a_3), E_8(a_2)\} \\ \\
               \includegraphics[scale=0.35]{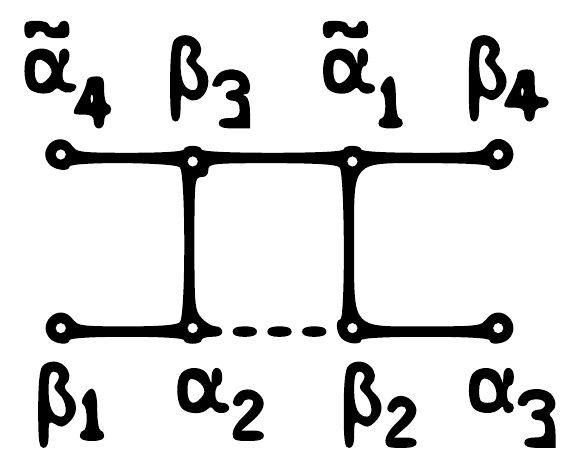}
               \\
               \end{array}$
                &
                 $\begin{array}{l}
                 \\
                 \widetilde{S} = \{\widetilde\alpha_1, \alpha_2, \alpha_3, \widetilde\alpha_4, \beta_1,
                                 \beta_2, \beta_3, \beta_4 \} \\
                 M_I\tau = \tau \text{ for } \tau \in \{\widetilde\alpha_1, \alpha_2, \alpha_3,
                                  \beta_1, \beta_2, \beta_3, \beta_4\} \\
                 M_I\widetilde\alpha_4 = \alpha_4 = \\
                     \qquad  -(3\widetilde\alpha_1 + 2\beta_2 + 2\beta_3 + 2\beta_4 + \alpha_3  + \widetilde\alpha_4)
                 \\
                 \end{array}$
                 \\
              \hline
              $\begin{array}{c}
              \\
              \footnotesize{(12)}
              \\
              \\
               \footnotesize{\bf E_8(a_4)} \\
             \end{array}$
              &
              $\begin{array}{c}
                \\
                 \{\widetilde\Gamma, \Gamma\} = \{E_8(a_4), E_8(a_1)\} \\ \\
               \includegraphics[scale=0.45]{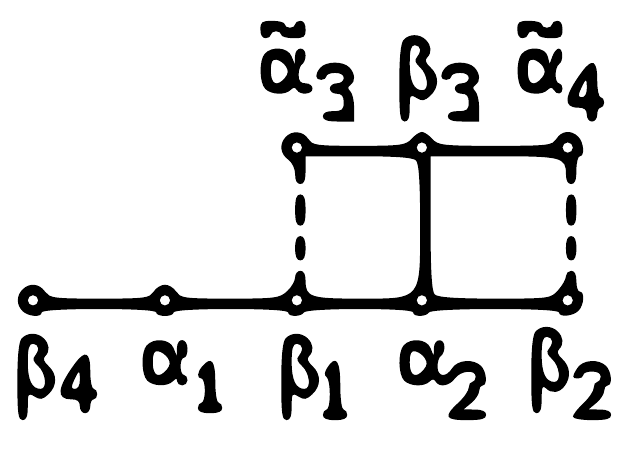}
               \\
               \end{array}$
                &
                 $\begin{array}{l}
                 \\
                 \widetilde{S} = \{\alpha_1, \alpha_2, \widetilde\alpha_3, \widetilde\alpha_4,
                     \beta_1, \beta_2, \beta_3, \beta_4 \} \\
                 M_I\tau = \tau \text{ for } \tau \in \{\alpha_1, \alpha_2, \widetilde\alpha_3,
                        \beta_1, \beta_2, \beta_3, \beta_4 \} \\
                 M_I\widetilde\alpha_4 = \alpha_4 = \\
                    \qquad   -(\widetilde\alpha_4 + \beta_3 + \alpha_2 + \beta_1 + \alpha_1 + \beta_4)
                 \\
                 \end{array}$
                 \\
              \hline
            \end{tabular}
        \end{table}

        \item
        Pair $\{E_8(a_5), E_8(a_4)\}$: $M_I$ maps $E_8(a_5)$-set to $E_8(a_4)$-set.
    
        \begin{itemize}    
            \item Mapping:
            $M_I\widetilde\beta_4 = \beta_4 = -(\widetilde\beta_4 + \widetilde\alpha_4 + \beta_3 + \alpha_2 + \beta_1 + \alpha_1)$
            
            \item Root systems:
            $S = \{\widetilde\beta_4, \widetilde\alpha_4, \beta_3, \alpha_2, \beta_1, \alpha_1\}$ and $\varPhi(S)$ is a root system of type $A_6$.
            
            \item Minimal roots:
            $\beta_4$ is the minimal root in $\varPhi(S)$.
            
            \item Eliminated edge:
            $\{\widetilde\alpha_4, \widetilde\beta_4\}$.
            
            \item Emerging edge:
            $\{\beta_4, \alpha_1\}$.
            
            \item Checking relations:
            \[\begin{cases}
                 \beta_4 \perp \beta_1, \alpha_2, \beta_3, \widetilde\alpha_4
                     \quad & (\beta_4 \text{ - minimal root}) \\
                 \beta_4 \perp \widetilde\alpha_3, \beta_2   \quad &
                   (\beta_3 + \beta_1 \perp \widetilde\alpha_3 \text{ and } \\
                 & \qquad  \alpha_2 + \widetilde\alpha_4 \perp \beta_2) \\
                 (\beta_4, \alpha_1) = -1 & (\beta_4 \text{ - minimal root})\\
          \end{cases}\]
        \end{itemize}

        \begin{figure}[!h]
            \centering
            \includegraphics[scale=0.45]{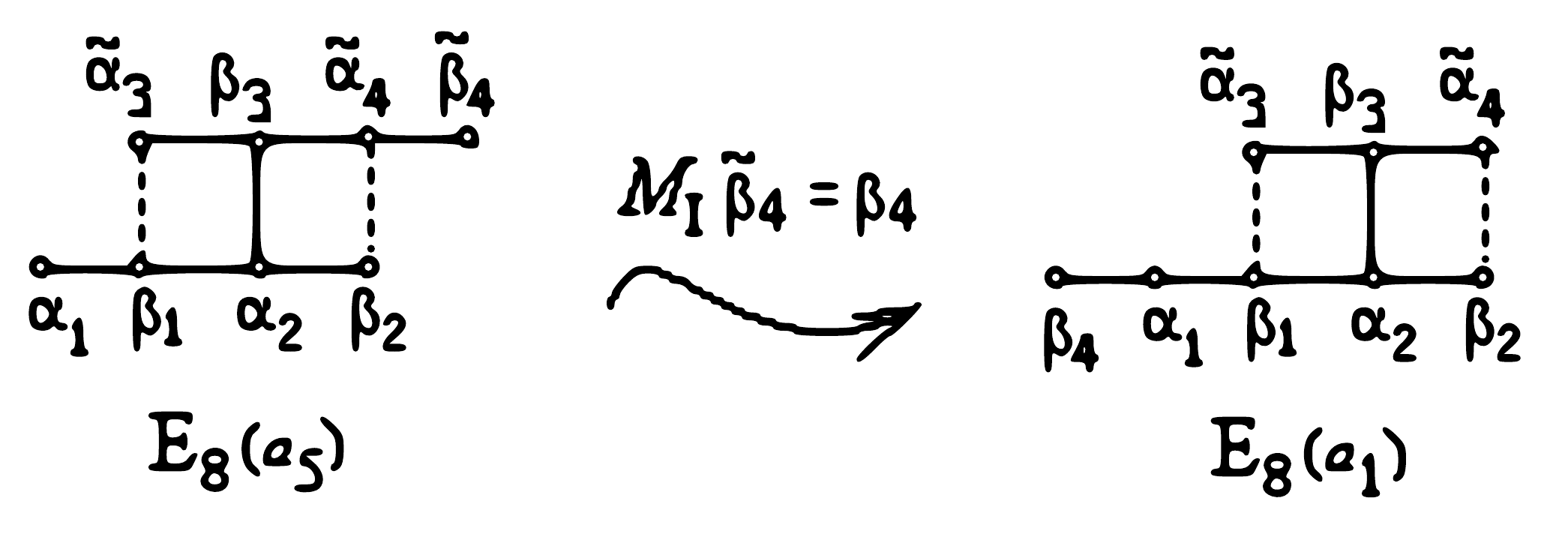}
            \caption{ Case (13). Mapping $M_I: E_8(a_5) \longmapsto E_8(a_4)$).}
            \label{map_E8a5_to_E8a4}
        \end{figure}

        \item
        Pair $\{E_8(a_6), E_8(a_4)\}$: $M_I$ maps $E_8(a_6)$-set to $E_8(a_4)$-set.

        \begin{itemize}
            \item Mapping: $M_I\widetilde\beta_4 = \beta_4 = -(\widetilde\beta_4 + 2\alpha_1 + 2\beta_1 + 2\alpha_2 + \beta_2 + \beta_3)$.
            
            \item Root systems: $S = \{\widetilde\beta_4, \alpha_1, \beta_1, \alpha_2, \beta_2, \beta_3\}$ and $\varPhi$ is a root system of type $D_6$.
            
            \item Minimal root: $\beta_4$ is the minimal root in $\varPhi$.
            
            \item Eliminated edges: $\{\widetilde\beta_4, \widetilde\alpha_3\}$ and $\{\widetilde\beta_4, \alpha_1\}$.
            
            \item Emerging edge: $\{\beta_4, \alpha_1\}$.
            
            \item Checking relations: 
            \[\begin{cases}
                  \beta_4 \perp \beta_1, \alpha_2, \beta_2, \beta_3, \widetilde\beta_4 & (\beta_4 \text{ - minimal root}) \\
                  \beta_4 \perp \widetilde\alpha_3 & ((\beta_4, \widetilde\alpha_3) = (\widetilde\beta_4 + \beta_3 + 2\beta_1, \widetilde\alpha_3) = -1 - 1 + 2 = 0) \\
                  \beta_4 \perp \widetilde\alpha_4 & (\beta_3 + \beta_2 \perp \alpha_4) \\
                  (\beta_4, \alpha_1) = -1 & (\beta_4 \text{ - minimal root}) \\
            \end{cases}\]
        \end{itemize}

        \begin{figure}[!h]
            \centering
            \includegraphics[scale=0.45]{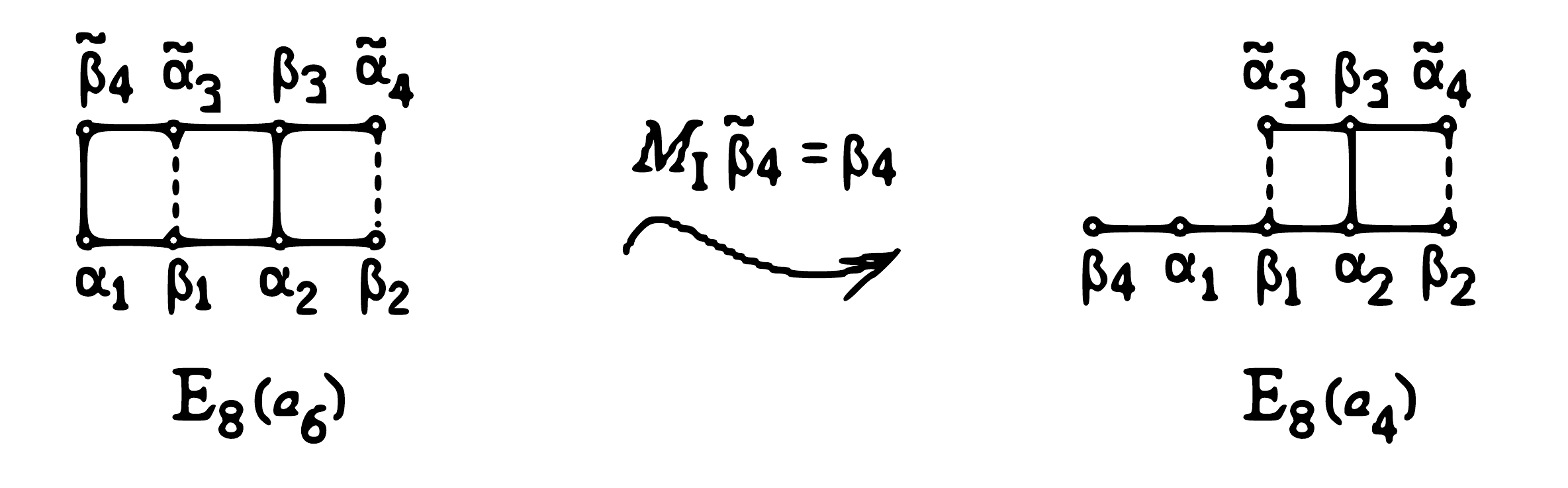}
            \caption{ Case (14). Mapping $M_I: E_8(a_6) \longmapsto E_8(a_4)$.}
            \label{map_E8a6_to_E8a4}
        \end{figure}

        \item
        Pair $\{E_8(a_7), E_8(a_5)\}$: $M_I$ maps $E_8(a_7)$-set to $E_8(a_5)$-set.

        \begin{itemize}
            \item Mapping: $M_I\widetilde\alpha_1 = \alpha_1 = -(2\beta_1 +  \widetilde\alpha_1 + \alpha_2 + \widetilde\alpha_3)$.

            \item Root systems: $S = \{\beta_1, \widetilde\alpha_1, \alpha_2, \widetilde\alpha_3 \}$ and $\varPhi(S)$ is a root system of type $D_4$.

            \item Minimal root: $\alpha_1$ is the minimal root in $\varPhi(S)$.

            \item Eliminated edges: $\{\widetilde\alpha_1, \widetilde\beta_1\}$ and $\{\widetilde\alpha_1, \beta_2\}$.

            \item Emerging edge: $\{\beta_1, \alpha_1 \}$.

            \item Checking relations:
            \[\begin{cases}
                \alpha_1 \perp \alpha_2, \widetilde\alpha_3 & (\alpha_1 \text{ - minimal root}) \\
                \alpha_1 \perp \widetilde\beta_4  & (\text{disconnected}) \\
                (\alpha_1, \beta_1) = -1  & (\alpha_1 \text{ - minimal root}) \\
                \alpha_1 \perp \beta_2, \beta_3 & (\alpha_2 + \widetilde\alpha_1 \perp \beta_2, \text{ and } \alpha_2 + \widetilde\alpha_3 \perp \beta_3)
            \end{cases}\]
        \end{itemize}

        \begin{figure}
            \centering
            \includegraphics[scale=0.45]{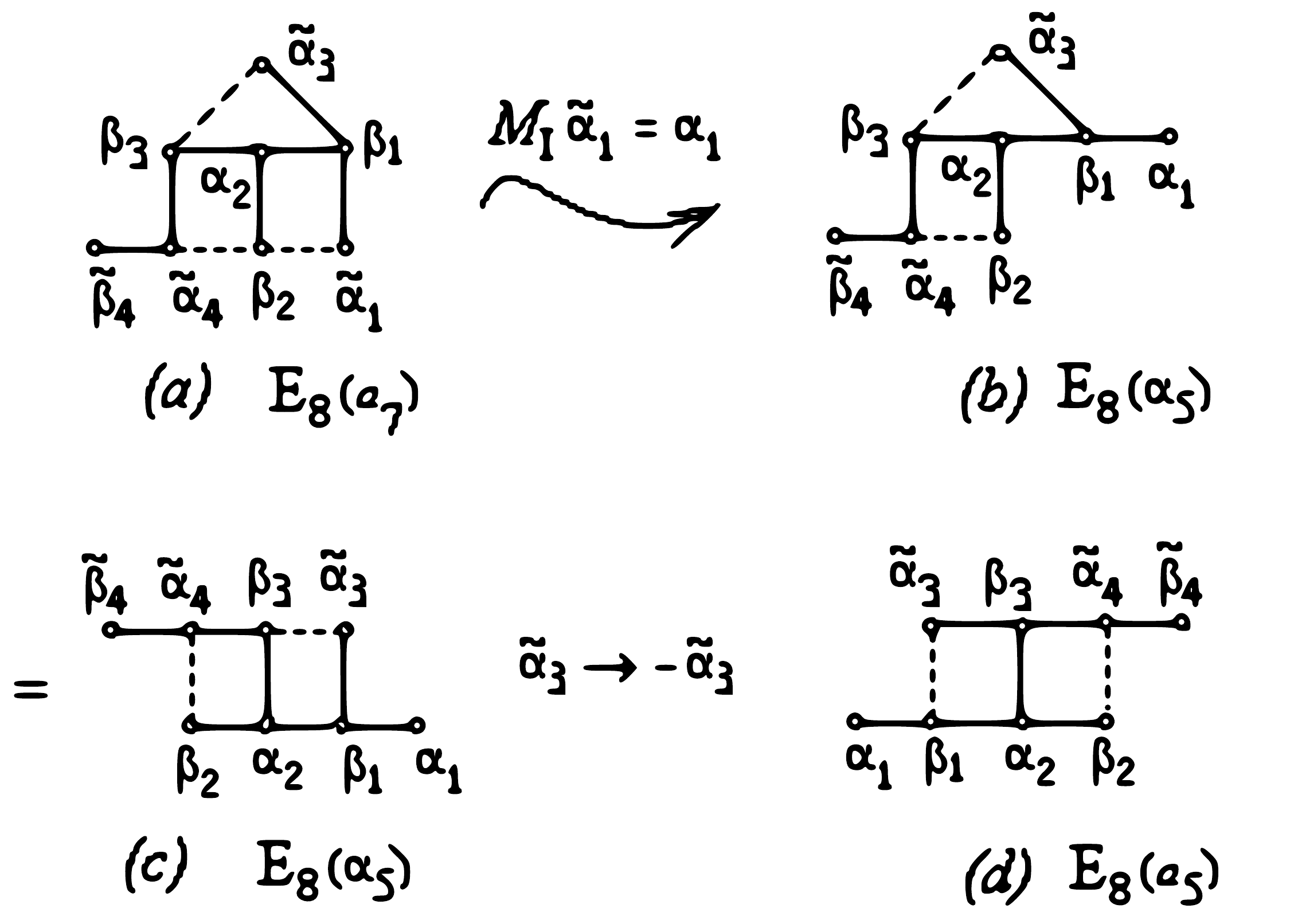}
            \caption{ Case (15). Mapping $M_I: E_8(a_7) \longmapsto E_8(a_5)$.
            Diagrams (c) and (d) are equivalent (by similarity $\widetilde\alpha_3 \longmapsto -\widetilde\alpha_3$)}
            \label{map_E8a7_to_E8a5}
        \end{figure}

        \item
        Pair $\{E_8(a_8), E_8(a_7)\}$: $M_I$ maps $E_8(a_8)$-set to $E_8(a_7)$-set.

        \begin{itemize}
            \item Mapping: $M_I\beta_4 = \widetilde\beta_4 = -(2\widetilde\alpha_4 + \beta_4 + \beta_2 + \beta_3)$.
            
            \item Root systems: $S = \{\widetilde\alpha_4, \beta_4, \beta_2, \beta_3\}$ and $\varPhi(S)$ is a root system of type $D_4$.
            
            \item Minimal root: $\beta_4$ is the minimal root in $\varPhi(S)$.
            
            \item Eliminated edges: $\{\beta_4, \widetilde\alpha_4\}$, $\{\beta_4, \widetilde\alpha_1\}$ and $\{\beta_4, \widetilde\alpha_3\}$.
            
            \item Emerging edge: $\{\widetilde\beta_4, \widetilde\alpha_4\}$.
            
            \item Checking relations:
            \[\begin{cases}
                \widetilde\beta_4 \perp  \beta_2, \beta_3 & (\beta_4 \text{ - minimal root}) \\
                \widetilde\beta_4 \perp  \widetilde\alpha_1, \widetilde\alpha_3 &
                     (\widetilde\alpha_1 \perp \beta_2 + \beta_4    \text{ and } \\
                      & \quad \beta_3 + \beta_4 \perp \widetilde\alpha_3) \\
                (\widetilde\beta_4, \widetilde\alpha_1) = -1 & (\beta_4 \text{ - minimal root})
            \end{cases}\]
        \end{itemize}
        
        \begin{figure}[!h]
            \centering
            \includegraphics[scale=0.45]{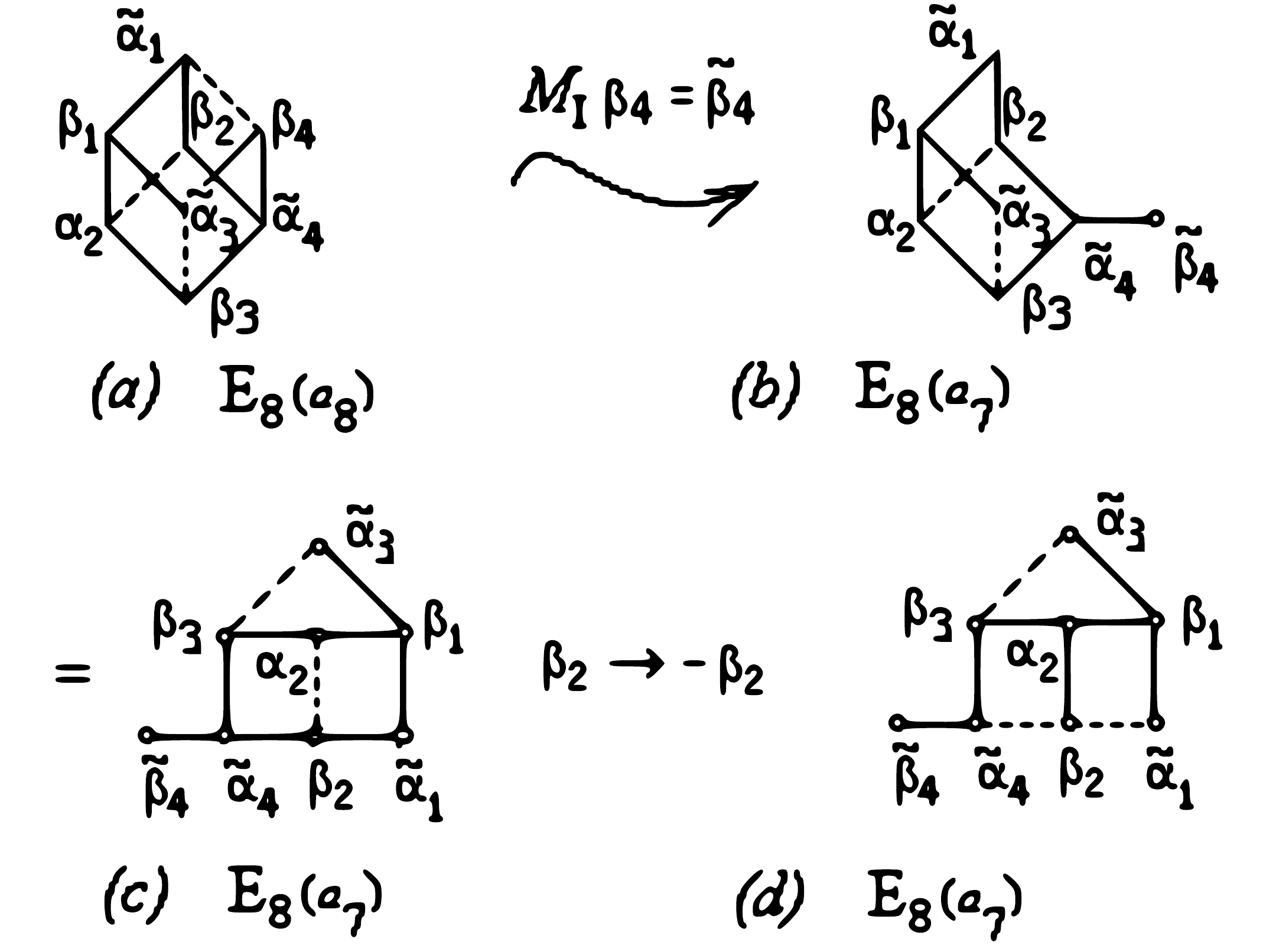}
            \caption{Case (16). Mapping $M_I: E_8(a_8) \longmapsto E_8(a_7)$.
            Diagrams (c) and (d) are equivalent (by similarity $\widetilde\beta_2 \longmapsto -\widetilde\beta_2$) }
            \label{map_E8a8_to_E8a7}
        \end{figure}

        \begin{table}
           \centering
           \caption{$\{ E_8(a_5), E_8(a_4) \}$, $\{ E_8(a_6), E_8(a_4)\}$,  $\{ E_8(a_7), E_8(a_5) \}$, $\{ E_8(a_8), E_8(a_7)\}$}
           \label{tab_part_root_syst_4}
           \begin{tabular} {|c|c|c|}
            \hline
               & & \\
               & $\widetilde\Gamma$-basis $\widetilde{S}$ and $\Gamma$-basis $S$
                         & Mapping $M_I$ \\
               & & \\
              \hline
              $\begin{array}{c}
              \\
              \footnotesize{(13)}
              \\
              \\
              \footnotesize{\bf E_8(a_5)}
              \\
             \end{array}$
              &
              $\begin{array}{c}
                \\
                 \{\widetilde\Gamma, \Gamma\} = \{E_8(a_5), E_8(a_4)\} \\ \\
               \includegraphics[scale=0.35]{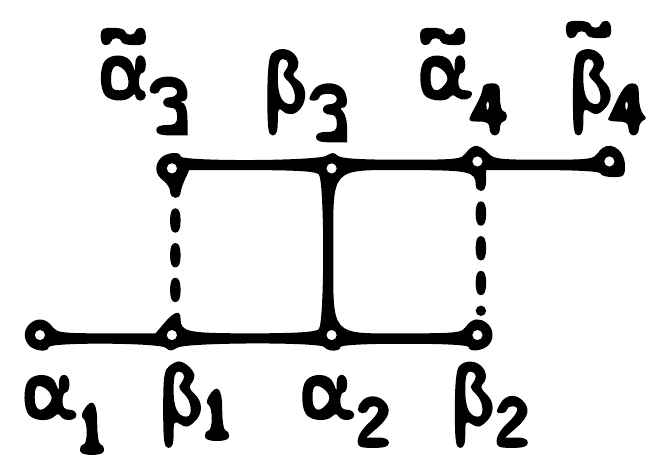}  \\
               \end{array}$
                &
                 $\begin{array}{l}
                 \\
                 \widetilde{S} = \{\alpha_1, \alpha_2, \widetilde\alpha_3, \widetilde\alpha_4,
                     \beta_1, \beta_2, \beta_3, \widetilde\beta_4 \}, \\
                 M_I\tau = \tau \text{ for } \tau \in \{\alpha_1, \alpha_2, \widetilde\alpha_3, \widetilde\alpha_4,
                           \beta_1,  \beta_2, \beta_3 \} \\
                 M_I\widetilde\beta_4 = \beta_4  = \\
                     \qquad  -(\widetilde\beta_4 + \widetilde\alpha_4 + \beta_3  + \alpha_2 + \beta_1 + \alpha_1) 
                     \\
                 \end{array}$
                 \\
              \hline
              $\begin{array}{c}
              \\
              \footnotesize(14)
              \\
              \\
              \footnotesize{{\bf E_8(a_6)}}
              \\
             \end{array}$
              &
              $\begin{array}{c}
                 \\
                 \{\widetilde\Gamma, \Gamma\} = \{E_8(a_6), E_8(a_4)\} \\ \\
               \includegraphics[scale=0.35]{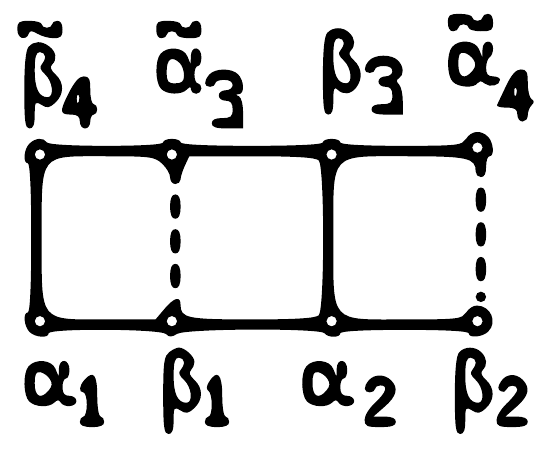}  \\
               \end{array}$
                &
                 $\begin{array}{l}
                 \\
                 \widetilde{S} = \{\alpha_1, \alpha_2, \widetilde\alpha_3, \widetilde\alpha_4,
                     \beta_1, \beta_2, \beta_3, \widetilde\beta_4 \}, \\
                 M_I\tau = \tau \text{ for } \tau \in \{\alpha_1, \alpha_2, 
                   \widetilde\alpha_3, \widetilde\alpha_4, \beta_1,  \beta_2, \beta_3 \} \\
                 M_I\widetilde\beta_4 = \beta_4 = \\
                     \qquad   -(\widetilde\beta_4 + 2\alpha_1 + 2\beta_1 + 2\alpha_2 + \beta_2 + \beta_3)
                 \\
                 \end{array}$
                 \\
              \hline
              $\begin{array}{c}
              \\
              \footnotesize{(15)}
              \\
              \\
              \footnotesize{\bf E_8(a_7)}
              \\
             \end{array}$
              &
              $\begin{array}{c}
                \\
                 \{\widetilde\Gamma, \Gamma\} = \{E_8(a_7), E_8(a_5)\} \\ \\
               \includegraphics[scale=0.45]{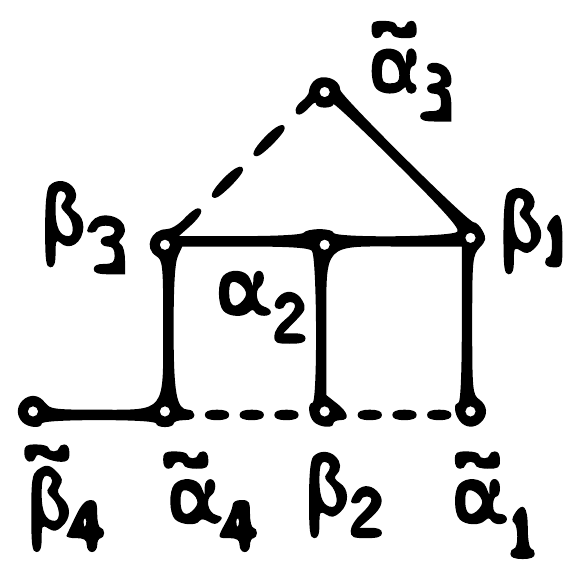}  \\
               \end{array}$
                &
                 $\begin{array}{l}
                 \\
                 \widetilde{S} = \{\widetilde\alpha_1, \alpha_2, \widetilde\alpha_3, \widetilde\alpha_4,
                     \beta_1, \beta_2, \beta_3, \widetilde\beta_4 \}, \\
                 M_I\tau = \tau \text{ for } \tau \in \{\alpha_2, \widetilde\alpha_3, \widetilde\alpha_4,
                     \beta_1, \beta_2, \beta_3, \widetilde\beta_4 \} \\
                 M_I\widetilde\alpha_1 = \alpha_1 = -(2\beta_1 +  \widetilde\alpha_1 + \alpha_2 + \widetilde\alpha_3)
                 \\
                 \end{array}$
                 \\
              \hline
              $\begin{array}{c}
               \\
               \footnotesize{(16)}
               \\
               \\
               \footnotesize{\bf E_8(a_8)}
               \\
             \end{array}$
              &     
              $\begin{array}{c}
                 \\
                  \{\widetilde\Gamma, \Gamma\} = \{E_8(a_8), E_8(a_7)\} \\ \\
               \includegraphics[scale=0.35]{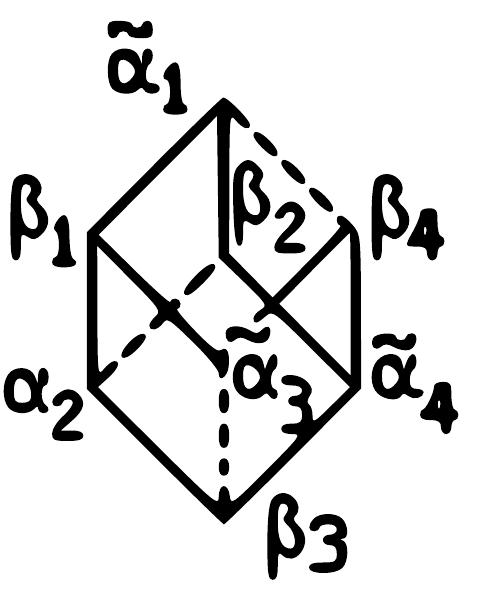}  \\
               \end{array}$
                &
                 $\begin{array}{l}
                 \\
                 \widetilde{S} = \{\widetilde\alpha_1, \alpha_2, \widetilde\alpha_3, \widetilde\alpha_4,
                    \beta_1, \beta_2, \beta_3, \beta_4 \}, \\
                 M_I\tau = \tau  \\
                 \qquad  \text{ for } \tau \in
                    \{\widetilde\alpha_1, \alpha_2, \widetilde\alpha_3, \widetilde\alpha_4,
                       \beta_1, \beta_2, \beta_3\}        \\
                 M_I\beta_4 = \widetilde\beta_4 = -(2\widetilde\alpha_4 + \beta_4 + \beta_2 + \beta_3)
                 \\
                 \end{array}$
                 \\
              \hline
            \end{tabular}
        \end{table}
    \end{enumerate}

    \item
    Let us prove that $M_I$ is an involution.
    There exists a certain root $\widetilde\alpha \in \widetilde{S}$ such that
    \begin{equation*}
      \begin{cases}
          M_I \tau_i = \tau_i \text{ for all }
               \tau_i \in \widetilde{S}, \tau_i \neq \widetilde\alpha, \\
          M_I \widetilde\alpha = \alpha =
             -\widetilde\alpha + \sum t_i \tau_i,
             \text{ where sum is taken over all }
               \tau_i \in \widetilde{S}, \tau_i \neq \widetilde\alpha.
      \end{cases}
    \end{equation*}
    The image $M_I \widetilde\alpha = \alpha$ is the root in $S$.
    Then,
    \begin{equation*}
        M^2_I \widetilde\alpha
        = -M\widetilde\alpha + \sum t_i \tau_i
        = \widetilde\alpha - \sum t_i \tau_i + \sum t_i \tau_i
        = \widetilde\alpha
        \quad  \text{ and } \quad
        M^2_I\widetilde\alpha
        = M_I\alpha
        = \widetilde\alpha.
    \end{equation*}
    In other words,
    \begin{equation*}
        M_I : \widetilde\alpha \longmapsto \alpha
        \quad \textup{ and } \quad
        M_I : \alpha \longmapsto \widetilde\alpha.
    \end{equation*}

\end{enumerate}

\subsection{Relation of partial Cartan matrices}

  Consider pairs $\{ \widetilde\Gamma, \Gamma \}$ out of the adjacency list \eqref{eq_pairs_trans}.
  The transition matrix $M_I$ maps the ${\widetilde\Gamma}$-basis $\widetilde{S}$
  to the ${\Gamma}$-basis  $S$:
  \begin{equation*}
      M \widetilde\tau_i = \tau_i, \text{ where } \widetilde\tau_i \in \widetilde{S},  \tau_i \in S.
  \end{equation*}
  If the matrix $M$ does not change a certain root $\widetilde\tau_i$,
  the designation of this root and the corresponding node
  is the same for $\widetilde\Gamma$-basis and $\Gamma$-basis, namely: $M\tau_i = \tau_i$.
  The transition matrix $M$ relates the partial Cartan matrices $B_\Gamma$ and  $B_{\widetilde\Gamma}$
  as follows:
  \begin{equation}
    \label{eq_trans_matr}
      {}^t{M} \cdot B_{\widetilde\Gamma} \cdot {M} = B_\Gamma.
  \end{equation}
 Eq. \eqref{eq_trans_matr} is the relation of partial Cartan matrices $B_{\Gamma}$
given in different bases.

\subsubsection{Example: $\{E_6(a_1), E_6 \}$}

For the pair $\{E_6(a_1), E_6 \}$ (case (3) in Appendix~\ref{sec_trans}),
\[
   M =
     \left [
       \begin{array}{cccccc}
         1 & 0 & 0 & 0 & 0 & -1 \\
         0 & 1 & 0 & 0 & 0 &  0 \\
         0 & 0 & 1 & 0 & 0 & -1 \\
         0 & 0 & 0 & 1 & 0 & -1 \\
         0 & 0 & 0 & 0 & 1 &  0 \\
         0 & 0 & 0 & 0 & 0 & -1 \\
      \end{array}
   \right ]
   \quad \textup{ and } \quad
    B_{E_6(a_1)} =
     \left [
       \begin{array}{cccccc}
         2 &  0 &  0 & -1 &  0 & 0 \\
         0 &  2 &  0 & -1 & -1 & 1 \\
         0 &  0 &  2 & -1 &  0 & -1 \\
        -1 & -1 & -1 &  2 &  0 & 0 \\
         0 & -1 &  0 &  0 &  2 & 0 \\
         0 &  1 & -1 &  0 &  0 & 2 \\
      \end{array}
     \right ].
\]
Then,
\[
    {}^tM B_{E_6(a_1)} M =
     \left [
       \begin{array}{cccccc}
         2 & 0 & 0 & -1 & 0 & -1 \\
         0 & 2 & 0 & -1 & -1 &  0 \\
         0 & 0 & 2 & -1 & 0 & 0 \\
         -1 & -1 & -1 & 2 & 0 & 0 \\
         0 & -1 & 0 & 0 & 2 &  0 \\
         -1 & 0 & 0 & 0 & 0 & 2 \\
      \end{array}
      \right ].
\]
The matrix ${}^tM B_{E_6(a_1)}M$ is the Cartan matrix $B_{E_6}$ for Dynkin diagram $E_6$.

\begin{appendix}

\section{Dynkin-Minchenko completion procedure}
   \label{sect_A}

\subsection{Enhanced Dynkin diagram $\Delta(E_6)$}

Extra nodes $m_1$ and $m_2$ for the enhanced Dynkin diagram $\Delta(E_6)$ are as follows (see Fig.~\ref{fig_enhan_E6}):
\begin{equation}
  \label{eq_E6_extra_1}
  \begin{split}
    & m_1 = 2\alpha_4 + \alpha_2 + \alpha_3 +\alpha_5, \\
    & m_1 \text{ is the maximal element in } \{\alpha_4, \alpha_2, \alpha_3, \alpha_5 \},
  \end{split}
\end{equation}
\begin{equation}
  \label{eq_E6_extra_2}
  \begin{split}
    & m_2 = 2m_1 - \alpha_4 + \alpha_1 + \alpha_6, \quad \\
    & m_2 \text{ is the maximal element in } \{m_1, -\alpha_4, \alpha_1, \alpha_6 \}.
  \end{split}
\end{equation}
From \eqref{eq_E6_extra_1} and \eqref{eq_E6_extra_2} we get
\begin{equation*}
     m_2 = 3\alpha_4 + 2\alpha_2 + 2\alpha_3 + 2\alpha_5 + \alpha_1 + \alpha_6.
\end{equation*}
Then, $m_2$ is also the maximal element in
the $E_6$-set $\{\alpha_1, \alpha_2, \alpha_3, \alpha_4, \alpha_4, \alpha_6 \}$.

\begin{figure}
    \centering
    \includegraphics[scale=0.3]{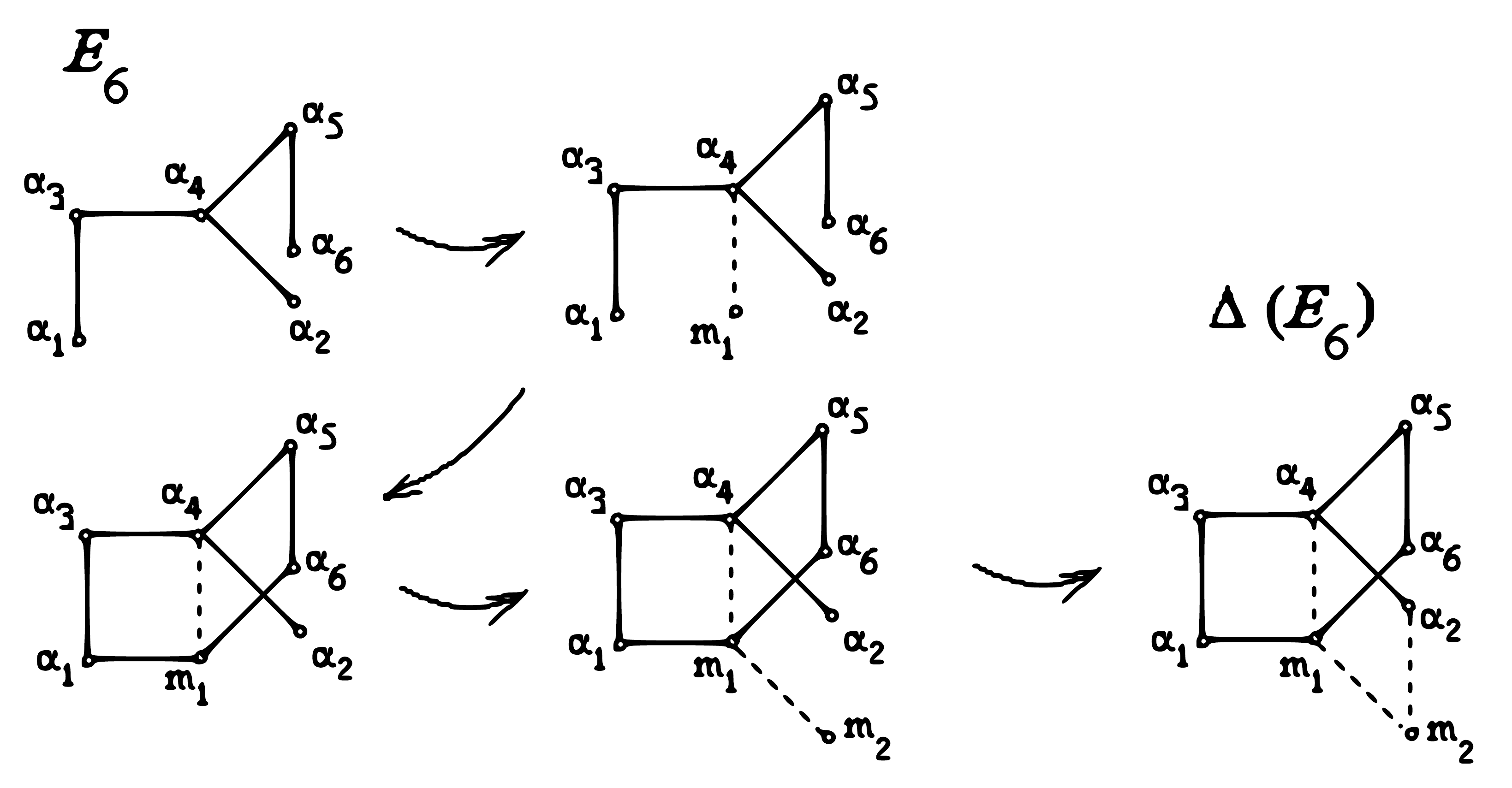}
    \caption{Enhanced Dynkin diagrams $\Delta(E_6)$}
    \label{fig_enhan_E6}
\end{figure}

\subsection{Enhanced Dynkin diagram $\Delta(E_7)$}

As in the case $E_6$, the extra nodes $m_1$ and $m_2$ are as follows:
\begin{equation*}
  \begin{split}
    & m_1 = 2\alpha_4 + \alpha_2 + \alpha_3 +\alpha_5, \\
    & m_2 = 3\alpha_4 + 2\alpha_2 + 2\alpha_3 + 2\alpha_5 + \alpha_1 + \alpha_6.
  \end{split}
\end{equation*}
Further, the extra node $m_3$ is as follows:
\begin{equation*}
  \begin{split}
    m_3 = & 2\alpha_6 + \alpha_5 + \alpha_7 + m_1 = \\
          & \alpha_2 + \alpha_3 + 2\alpha_4 + 2\alpha_5 + 2\alpha_6 + \alpha_7.
  \end{split}
\end{equation*}
Here, $m_3$ is the maximal element in the $D_6$-set
$\{\alpha_2, \alpha_3, \alpha_4, \alpha_4 \alpha_6 \}$.
The extra node $m_4$:
\begin{equation*}
  \begin{split}
    m_4 = & 2\alpha_1 + \alpha_3 + m_1 + m_3 = \\
          & 4\alpha_4 + 3\alpha_5 +  3\alpha_3 + 2\alpha_2 + 2\alpha_6 + 2\alpha_1 + \alpha_7.
  \end{split}
\end{equation*}
The node $m_4$ is the maximal element in the $E_7$-set (see Fig. \ref{fig_enhan_E7}):
\begin{equation*}
   \{\alpha_1, \alpha_2, \alpha_3, \alpha_4, \alpha_5, \alpha_6, \alpha_7 \}.
\end{equation*}

\begin{figure}
    \centering
    \includegraphics[scale=0.3]{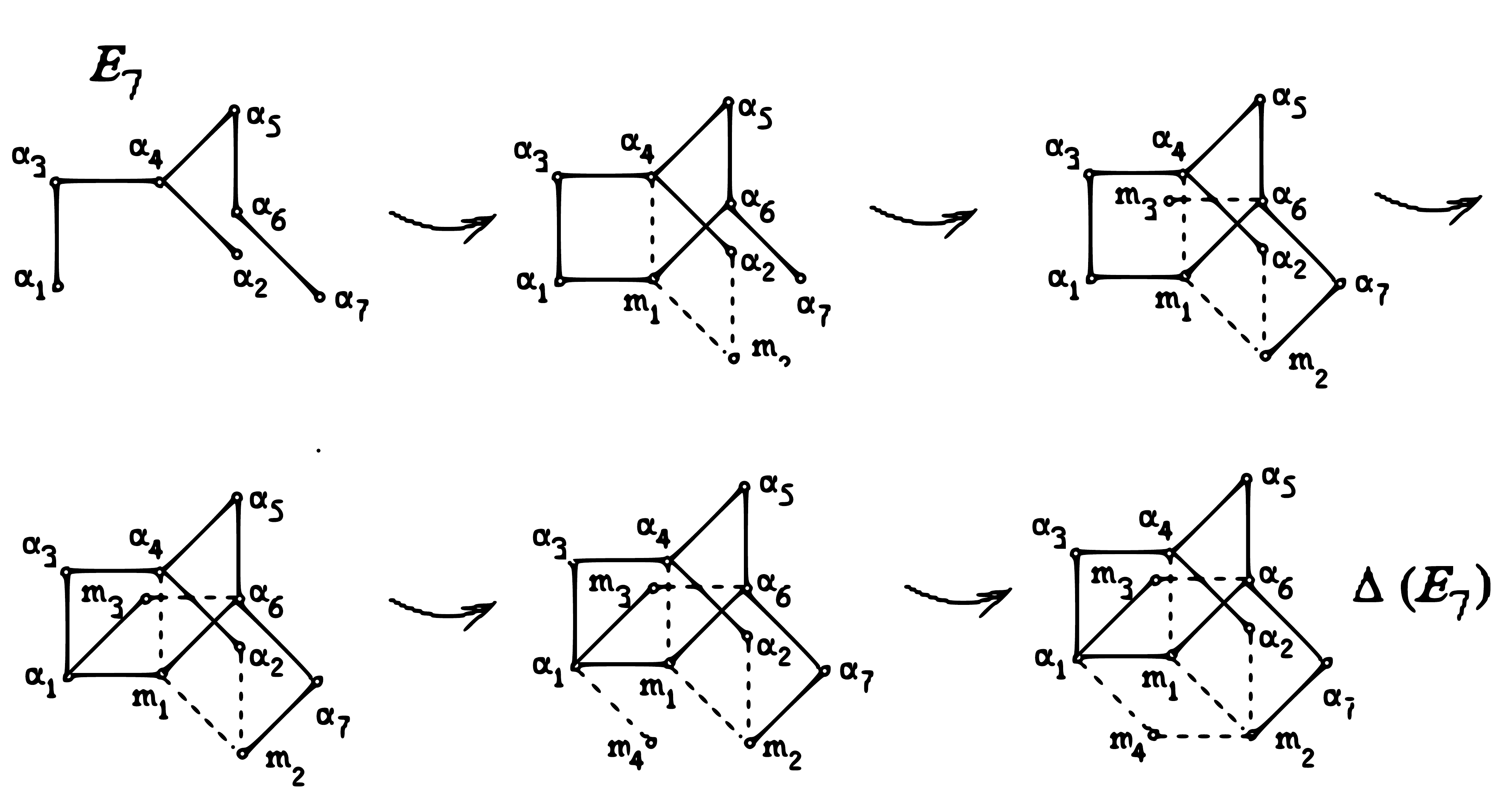}
    \caption{Enhanced Dynkin diagrams $\Delta(E_7)$}
    \label{fig_enhan_E7}
\end{figure}

\section{Transition  matrices $M_I$, cases (1) - (16)}
  \label{sec_trans}

\begin{enumerate}[(1)]
    \item $\{ D_4(a_1), D_4 \}$:
    \[\begin{array}{ccccc}
         \alpha_1 & \alpha_2 & \alpha_3 & \alpha_4 & ~ \\
         1 & 0 & -1 & 0 & \alpha_1 \\
         0 & 1 & -1 & 0 & \alpha_2 \\
         0 & 0 & -1 & 0 & \widetilde\alpha_3 \\
         0 & 0 &  0 & 1 & \alpha_4
    \end{array}\]

    \item $\{ D_l(a_k), D_l \}$:
    \[\begin{array}{cccccc}
         \tau_1  & \tau_2 & \dots & \tau_{l-1} & \beta_{k+1} & ~ \\
            1    & 0      & \dots & 0  & -1  & \tau_1 \\
            0    & 1      & \dots & -1 & -2  & \tau_2 \\
           \dots & \dots  & \dots & \dots & \dots & \dots \\
            0    & 0      & \dots & 0 &  -1 & \tau_{k+1} \\
            0    & 0      & \dots & 0 &  -1 & \overline\tau_{k+1} \\
           \dots & \dots  & \dots & \dots & \dots & \dots\\
            0    & 0      & \dots & 0 &  0 & \tau_{l-1}
      \end{array}\]

    \item $\{ E_6(a_1), E_6 \}$:
    \[\begin{array}{ccccccc}
         \alpha_1  & \alpha_2 & \alpha_3 & \beta_1 & \beta_2 & \beta_3 & \\
         1 & 0 &   0 &   0 & 0 & -1 & \alpha_1 \\
         0 & 1 &   0 &   0 & 0 & 0  & \alpha_2 \\
         0 & 0 &   1 &   0 & 0 & -1 & \alpha_3 \\
         0 & 0 &   0 &   1 & 0 & -1 & \beta_1   \\
         0 & 0 &   0 &   0 & 1 & 0  & \beta_2  \\
         0 & 0 &   0 &   0 & 0 & -1 & \widetilde\beta_3
    \end{array}\]
    
    \item $\{ E_6(a_2), E_6(a_1) \}$:
    \[\begin{array}{ccccccc}
         \alpha_1  & \alpha_2 & \alpha_3 & \beta_1 & \beta_2 & \widetilde\beta_3 & ~ \\
         1 & 0 & 0 & 0 & 0 & 0 & \alpha_1 \\
         0 & 1 & 0 & 0 & -1 & 0 & \alpha_2 \\
         0 & 0 & 1 & 0 & -1 & 0 & \alpha_3 \\
         0 & 0 & 0 & 1 & -1 & 0 & \beta_1 \\
         0 & 0 & 0 & 0 & -1 & 0 & \widetilde\beta_2 \\
         0 & 0 & 0 & 0 & 0 & 1 & \widetilde\beta_3
    \end{array}\]
    
    \item $\{ E_7(a_1), E_7 \}$:
    \[\begin{array}{cccccccc}
         \alpha_1  & \alpha_2 & \alpha_3 & \beta_1 & \beta_2 & \beta_3 & \beta_4 & ~ \\
         1 & 0 &   0 &   0 & 0 & 0 & 0 & \alpha_1 \\
         0 & 1 &  -1 &   0 & 0 & 0 & 0 & \alpha_2 \\
         0 & 0 &  -1 &   0 & 0 & 0 & 0 & \widetilde\alpha_3 \\
         0 & 0 &   0 &   1 & 0 & 0 & 0 & \beta_1 \\
         0 & 0 &  -1 &   0 & 1 & 0 & 0 & \beta_2 \\
         0 & 0 &  -1 &   0 & 0 & 1 & 0 & \beta_3 \\
         0 & 0 &   0 &   0 & 0 & 0 & 1 & \beta_4
    \end{array}\]
    
    \item $\{ E_7(a_2), E_7 \}$:
    \[\begin{array}{cccccccc}
         \alpha_1  & \alpha_2 & \alpha_3 & \beta_1 & \beta_2 & \beta_3 & \beta_4 & ~ \\
        -1 & 0 &   0 &  0 & 0 & 0 & 0 & \widetilde\alpha_1 \\
        -1 & 1 &   0 &  0 & 0 & 0 & 0 & \alpha_2 \\
         0 & 0 &   1 &  0 & 0 & 0 & 0 & \alpha_3 \\
        -1 & 0 &   0 &  1 & 0 & 0 & 0 & \beta_1 \\
         0 & 0 &   0 &  0 & 1 & 0 & 0 & \beta_2 \\
        -1 & 0 &   0 &  0 & 0 & 1 & 0 & \beta_3 \\
         0 & 0 &   0 &  0 & 0 & 0 & 1 & \beta_4
    \end{array}\]
    
    \item $\{ E_7(a_3), E_7(a_1) \}$:
    \[\begin{array}{cccccccc}
         \alpha_1  & \alpha_2 & \widetilde\alpha_3 & \beta_1 & \beta_2 & \beta_3 & \beta_4 & ~ \\
         1 & 0 &  0 &  0 & 0 & 0 & -1 & \alpha_1 \\
         0 & 1 &  0 &  0 & 0 & 0 & -1 & \alpha_2 \\
         0 & 0 &  1 &  0 & 0 & 0 & 0 & \widetilde\alpha_3 \\
         0 & 0 &  0 &  1 & 0 & 0 & -1 & \alpha_4 \\
         0 & 0 &  0 &  0 & 1 & 0 & -1 & \beta_1 \\
         0 & 0 &  0 &  0 & 0 & 1 & 0 & \beta_2 \\
         0 & 0 &  0 &  0 & 0 & 0 & -1 & \beta_3
    \end{array}\]
    
    \item $\{ E_7(a_4), E_7(a_3) \}$:
    \[\begin{array}{cccc cccc}
         \alpha_1  & \alpha_2 & \widetilde\alpha_3 & \alpha_4 & \beta_1 & \beta_2 & \beta_3 & ~ \\
         -1 & 0 &   0 &  0 & 0 & 0 & 0 & \widetilde\alpha_1 \\
         -1 & 1 &   0 &  0 & 0 & 0 & 0 & \alpha_2 \\
         -1 & 0 &   1 &  0 & 0 & 0 & 0 & \widetilde\alpha_3 \\
          0 & 0 &   0 &  1 & 0 & 0 & 0 & \alpha_4 \\
         -2 & 0 &   0 &  0 & 1 & 0 & 0 & \beta_1 \\
          0 & 0 &   0 &  0 & 0 & 1 & 0 & \beta_2 \\
          0 & 0 &   0 &  0 & 0 & 0 & 1 & \beta_3
    \end{array}\]

    \item $\{ E_8(a_1), E_8 \}$:
    \[\begin{array}{ccccccccc}
         \alpha_1  & \alpha_2 & \alpha_3 & \alpha_4 & \beta_1 & \beta_2 & \beta_3 & \beta_4 & ~ \\
         1 & 0 &   0 &   0 & 0 & 0 & 0 & 0 & \alpha_1 \\
         0 & 1 &  -1 &   0 & 0 & 0 & 0 & 0 & \alpha_2 \\
         0 & 0 &  -1 &   0 & 0 & 0 & 0 & 0 & \widetilde\alpha_3 \\
         0 & 0 &   0 &   1 & 0 & 0 & 0 & 0 & \alpha_4 \\
         0 & 0 &   0 &   0 & 1 & 0 & 0 & 0 & \beta_1 \\
         0 & 0 &  -1 &   0 & 0 & 1 & 0 & 0 & \beta_2 \\
         0 & 0 &  -1 &   0 & 0 & 0 & 1 & 0 & \beta_3 \\
         0 & 0 &   0 &   0 & 0 & 0 & 0 & 1 & \beta_4
    \end{array}\]

    \item $\{ E_8(a_2), E_8 \}$:
    \[\begin{array}{ccccccccc}
         \alpha_1  & \alpha_2 & \alpha_3 & \alpha_4 & \beta_1 & \beta_2 & \beta_3 & \beta_4 & ~ \\
        -1 & 0 &  0 &   0 & 0 & 0 & 0 & 0 & \widetilde\alpha_1 \\
        -1 & 1 &   0 &   0 & 0 & 0 & 0 & 0 & \alpha_2 \\
         0 & 0 &   1 &   0 & 0 & 0 & 0 & 0 & \alpha_3 \\
         0 & 0 &   0 &   1 & 0 & 0 & 0 & 0 & \alpha_4 \\
        -1 & 0 &   0 &   0 & 1 & 0 & 0 & 0 & \beta_1 \\
         0 & 0 &   0 &   0 & 0 & 1 & 0 & 0 & \beta_2 \\
        -1 & 0 &   0 &   0 & 0 & 0 & 1 & 0 & \beta_3 \\
         0 & 0 &   0 &   0 & 0 & 0 & 0 & 1 & \beta_4
    \end{array}\]

    \item $\{ E_8(a_3), E_8(a_2) \}$:
    \[\begin{array}{ccccccccc}
         \widetilde\alpha_1  & \alpha_2 & \alpha_3 & \alpha_4 & \beta_1 & \beta_2 & \beta_3 & \beta_4 & ~ \\
         1 & 0 &   0 &   -3 & 0 & 0 & 0 & 0 & \widetilde\alpha_1 \\
         0 & 1 &   0 &   0 & 0 & 0 & 0 & 0 & \alpha_2 \\
         0 & 0 &   1 &   -1 & 0 & 0 & 0 & 0 & \alpha_3 \\
         0 & 0 &   0 &   -1 & 0 & 0 & 0 & 0 & \widetilde\alpha_4 \\
         0 & 0 &   0 &   0 &  1 & 0 & 0 & 0 & \beta_1 \\
         0 & 0 &   0 &   -2 & 0 & 1 & 0 & 0 & \beta_2 \\
         0 & 0 &   0 &   -2 & 0 & 0 & 1 & 0 & \beta_3 \\
         0 & 0 &   0 &   -2 & 0 & 0 & 0 & 1 & \beta_4
     \end{array}\]

     \item $\{ E_8(a_4), E_8(a_1) \}$:
     \[\begin{array}{ccccccccc}
         \alpha_1  & \alpha_2 & \widetilde\alpha_3 & \alpha_4 & \beta_1 & \beta_2 & \beta_3 & \beta_4 \\
         1 & 0 &   0 &  -1 & 0 & 0 & 0 & 0 & \alpha_1 \\
         0 & 1 &   0 &  -1 & 0 & 0 & 0 & 0 & \alpha_2 \\
         0 & 0 &   1 &   0 & 0 & 0 & 0 & 0 & \widetilde\alpha_3 \\
         0 & 0 &   0 &  -1 & 0 & 0 & 0 & 0 & \widetilde\alpha_4 \\
         0 & 0 &   0 &  -1 & 1 & 0 & 0 & 0 & \beta_1 \\
         0 & 0 &   0 &   0 & 0 & 1 & 0 & 0 & \beta_2 \\
         0 & 0 &   0 &  -1 & 0 & 0 & 1 & 0 & \beta_3 \\
         0 & 0 &   0 &  -1 & 0 & 0 & 0 & 1 & \beta_4
      \end{array}\]

    \item $\{ E_8(a_5), E_8(a_4) \}$:
    \[\begin{array}{ccccccccc}
         \alpha_1  & \alpha_2 & \widetilde\alpha_3 & \widetilde\alpha_4 & \beta_1 & \beta_2 & \beta_3 & \beta_4 & ~ \\
         1 & 0 &   0 &   0 & 0 & 0 & 0 & -1 & \alpha_1 \\
         0 & 1 &   0 &   0 & 0 & 0 & 0 & -1 & \alpha_2 \\
         0 & 0 &   1 &   0 & 0 & 0 & 0 & 0 & \widetilde\alpha_3 \\
         0 & 0 &   0 &   1 & 0 & 0 & 0 & -1 & \widetilde\alpha_4 \\
         0 & 0 &   0 &   0 & 1 & 0 & 0 & -1 & \beta_1 \\
         0 & 0 &   0 &   0 & 0 & 1 & 0 & 0 & \beta_2 \\
         0 & 0 &   0 &   0 & 0 & 0 & 1 & -1 & \beta_3 \\
         0 & 0 &   0 &   0 & 0 & 0 & 0 & -1 & \widetilde\beta_4
    \end{array}\]

    \item $\{ E_8(a_6), E_8(a_4) \}$:
    \[\begin{array}{ccccccccc}
         \alpha_1  & \alpha_2 & \widetilde\alpha_3 & \widetilde\alpha_4 & \beta_1 & \beta_2 & \beta_3 & \beta_4 & ~ \\
         1 & 0 &   0 &   0 & 0 & 0 & 0 & -2 & \alpha_1 \\
         0 & 1 &   0 &   0 & 0 & 0 & 0 & -2 & \alpha_2 \\
         0 & 0 &   1 &   0 & 0 & 0 & 0 & 0 & \widetilde\alpha_3 \\
         0 & 0 &   0 &   1 & 0 & 0 & 0 & 0 & \widetilde\alpha_4 \\
         0 & 0 &   0 &   0 & 1 & 0 & 0 & -2 & \beta_1 \\
         0 & 0 &   0 &   0 & 0 & 1 & 0 & -1 & \beta_2 \\
         0 & 0 &   0 &   0 & 0 & 0 & 1 & -1 & \beta_3 \\
         0 & 0 &   0 &   0 & 0 & 0 & 0 & -1 & \widetilde\beta_4
      \end{array}\]

    \item $\{ E_8(a_7), E_8(a_5) \}$:
    \[\begin{array}{ccccccccc}
         \alpha_1  & \alpha_2 & \widetilde\alpha_3 &
         \widetilde\alpha_4 & \beta_1 & \beta_2 & \beta_3 & \widetilde\beta_4 & ~ \\
         -1 & 0 &   0 &   0 & 0 & 0 & 0 & 0 & \widetilde\alpha_1 \\
         -1 & 1 &   0 &   0 & 0 & 0 & 0 & 0 & \alpha_2 \\
         -1 & 0 &   1 &   0 & 0 & 0 & 0 & 0 & \widetilde\alpha_3 \\
         0 & 0 &   0 &   1 & 0 & 0 & 0 & 0 & \widetilde\alpha_4 \\
         -2 & 0 &   0 &   0 & 1 & 0 & 0 & 0 & \beta_1 \\
         0 & 0 &   0 &   0 & 0 & 1 & 0 & 0 & \beta_2 \\
         0 & 0 &   0 &   0 & 0 & 0 & 1 & 0 & \beta_3 \\
         0 & 0 &   0 &   0 & 0 & 0 & 0 & 1 & \widetilde\beta_4
      \end{array}\]

    \item $\{ E_8(a_8), E_8(a_7) \}$:
    \[\begin{array}{ccccccccc}
         \widetilde\alpha_1  & \alpha_2 & \widetilde\alpha_3 &
         \widetilde\alpha_4 & \beta_1 & \beta_2 & \beta_3 & \widetilde\beta_4 & ~ \\
         1 & 0 &   0 &   0 & 0 & 0 & 0 & 0 & \widetilde\alpha_1 \\
         0 & 1 &   0 &   0 & 0 & 0 & 0 & 0 & \alpha_2 \\
         0 & 0 &   1 &   0 & 0 & 0 & 0 & 0 & \widetilde\alpha_3 \\
         0 & 0 &   0 &   1 & 0 & 0 & 0 & -2 & \widetilde\alpha_4 \\
         0 & 0 &   0 &   0 & 1 & 0 & 0 & 0 & \beta_1 \\
         0 & 0 &   0 &   0 & 0 & 1 & 0 & -1 & \beta_2 \\
         0 & 0 &   0 &   0 & 0 & 0 & 1 & -1 & \beta_3 \\
         0 & 0 &   0 &   0 & 0 & 0 & 0 & -1 & \beta_4
      \end{array}\]
\end{enumerate}

\section{For discussion}

\subsection{Conjectures regarding the Dynkin-Minchenko completion procedure}
  \label{sec_conjectures}

The numerical relation in  Remark~\ref{obs_corresp_DM} motivates to the following assumption:
\begin{conjecture} \label{conj_1}
There is a correspondence between Carter diagrams  (with cycles)
and extra nodes in the Dynkin-Minchenko completion procedure.
\end{conjecture}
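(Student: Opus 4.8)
The plan is to upgrade the numerical coincidence of Remark~\ref{obs_corresp_DM} into an explicit canonical bijection, using the signed enhanced Dynkin diagram $\Delta(\Gamma)$ of Vavilov--Migrin as the common ambient object. Since every Carter diagram of the homogeneous class $C(\Gamma)$ embeds into $\Delta(\Gamma)$ with solid and dotted edges preserved, each Carter diagram $\Gamma'$ with cycles is realized by a subset of the nodes of $\Delta(\Gamma)$; and because the simple roots alone form the acyclic Dynkin diagram $\Gamma$, any such embedded subset containing a cycle must use at least one extra node. The first step is therefore to record, for each $\Gamma' = \Gamma(a_k)$, precisely which extra node its cyclic part forces into the embedding.

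The key mechanism linking the two sides is the minimal/maximal duality already visible in the paper. Each $\Gamma(a_k)$ is the source $\widetilde\Gamma$ of exactly one pair in the adjacency list \eqref{eq_pairs_trans}, and by Theorem~\ref{th_invol} the associated $M_I$ changes a single root $\widetilde\alpha$ into the minimal element $\alpha = -\widetilde\alpha + \sum t_i\tau_i$ of a Dynkin subset $S(\widetilde\alpha)$. The vector $-\alpha$ is the maximal root of that subset, which is exactly the type of object the Dynkin--Minchenko completion adjoins as an extra node (compare the footnote observing that adding a minimal root in place of a maximal one yields a topologically isomorphic diagram). I would define the correspondence by $\Gamma(a_k) \longmapsto -\alpha$, read inside $\Delta(\Gamma)$ after transporting $S(\widetilde\alpha)$ to a standard position by an element of $W$ together with the similarity maps $L_{\tau_i}$ of \eqref{eq_similar}. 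Lemma~\ref{lem_commute} and Theorem~\ref{th_conjugate}(i) guarantee that this transport does not depend on the chosen $\widetilde\Gamma$-set, so the assignment is canonical.

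To prove that this assignment is a bijection I would exploit that, by Remark~\ref{obs_corresp_DM}, the two finite sets have equal cardinality in every type, so it suffices to establish injectivity (equivalently surjectivity) alone. For the exceptional types $E_6, E_7, E_8$ this is a finite check: run the $2+4+8$ transitions of cases (3)--(16), compute each maximal root $-\alpha$ in the enhanced basis using the explicit matrices $M_I(n)$ of Appendix~\ref{sec_trans}, and match it against the extra nodes $m_i$ listed in Section~\ref{sect_A}; distinct Carter diagrams should yield distinct $m_i$, and every $m_i$ should occur. The heart of the problem is the infinite family $D_l$, where a uniform argument is needed: the transition of case~(2) produces, for each $k$, the maximal root of a $D_{k+2}$-subset, and one shows by induction on the completion step that these are precisely the extra nodes $m_1,\dots,m_{\lfloor (l-2)/2\rfloor}$ obtained by successively adjoining maximal roots of nested $D_4$-subsets.

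The main obstacle I anticipate is reconciling the two recursive constructions. The completion procedure is defined strictly through maximal roots of $D_4$-subsets, whereas the transitions of Theorem~\ref{th_invol} pass through Dynkin subsets of several types ($A_3, A_4, A_5, A_6, D_4, D_6, E_6$). Showing that the maximal roots of these larger subsets coincide, up to $W$ and the similarities $L_{\tau_i}$, with the nodes produced by iterated $D_4$-completion is the crux; I expect this to follow from the fact that a maximal root of a simply-laced subsystem is always reachable by a chain of $D_4$-completions inside that subsystem, but making this precise, and compatible with the solid and dotted edge types demanded by the signed enhanced diagram, is where the genuine work lies.
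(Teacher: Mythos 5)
The first thing to say is that the paper does not prove this statement: it is stated explicitly as Conjecture~\ref{conj_1} in the discussion appendix, and the only evidence the paper offers is the numerical coincidence of Remark~\ref{obs_corresp_DM} and Table~\ref{tab_extra nodes}. So there is no proof in the paper to compare yours against; what you have written is a research programme toward resolving an open conjecture, and it should be judged as such. Your programme is well aligned with the hints the paper itself drops (the footnote on minimal versus maximal roots in the completion procedure, the Vavilov--Migrin embedding, and the fact that each Carter diagram with cycles is the source $\widetilde\Gamma$ of exactly one pair in the adjacency list \eqref{eq_pairs_trans}, with the counts $2$, $4$, $8$ and $m-1$ matching on both sides). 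The candidate map $\Gamma(a_k)\longmapsto -\alpha$, sending a Carter diagram to the maximal root of the Dynkin subset $S(\widetilde\alpha)$ singled out by Theorem~\ref{th_invol}, is a sensible and concrete proposal.

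However, the proposal does not close the conjecture, and the gap is exactly where you locate it, plus one point you understate. First, none of the three substantive steps (the finite check for $E_6$, $E_7$, $E_8$ against the nodes $m_i$ of Section~\ref{sect_A}; the induction for the $D_l$ family; the reconciliation of the two recursions) is actually carried out, so this is a plan, not a proof. Second, and more seriously, your proposed repair of the ``main obstacle'' cannot work as stated: you suggest that the maximal root of each Dynkin subset $S(\widetilde\alpha)$ is reachable by a chain of $D_4$-completions \emph{inside that subsystem}, but most of the subsets occurring in cases (3)--(16) are of type $A_n$ ($A_3$, $A_4$, $A_5$, $A_6$), and a root subsystem of type $A_n$ contains no $D_4$-subsystem at all, so no $D_4$-completion can take place inside it. Third, the well-definedness of the correspondence is delicate: since all roots of a simply-laced system lie in a single $W$-orbit, transporting $-\alpha$ ``to a standard position by an element of $W$'' either trivializes the assignment (any root can be moved to any extra node) or leaves it dependent on the choice of transporting element; Lemma~\ref{lem_commute} controls the dependence on the $\widetilde\Gamma$-set, but not the choice of $w$ used to normalize the target inside $\Delta(\Gamma)$. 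Until the map is pinned down canonically --- for instance by working inside a fixed signed enhanced diagram and proving Conjecture~\ref{conj_2} first, so that the embedding of $\widetilde\Gamma$ into $\Delta(\Gamma)$ is rigid enough to identify which $m_i$ is hit --- the injectivity-plus-counting argument has nothing well-defined to apply to.
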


One more conjecture on relationship between
enhanced Dynkin diagrams and Carter diagrams is as follows:
\begin{conjecture} \label{conj_2}
If $\{\widetilde\Gamma, \Gamma \}$  is a homogeneous pair of Carter diagrams, then the signed enhanced Dynkin diagrams associated with $\widetilde\Gamma$ and $\Gamma$ coincide up to similarities.
\begin{equation*}
   \Delta(\widetilde\Gamma) = \Delta(\Gamma).
\end{equation*}
\end{conjecture}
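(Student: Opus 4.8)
The plan is to deduce Conjecture~\ref{conj_2} from the conjugacy results of Theorem~\ref{th_conjugate} together with the canonical nature of the Dynkin--Minchenko enhanced basis. First I would reduce to the case where $\Gamma$ is the Dynkin diagram of its homogeneous class. Since ``coincidence up to similarities'' is an equivalence relation on signed enhanced diagrams, and since every homogeneous class $C(\Gamma)$ contains a unique Dynkin diagram $\Gamma$ (see \eqref{eq_one_type}), it suffices to prove $\Delta(\widetilde\Gamma)=\Delta(\Gamma)$ for $\Gamma$ Dynkin; a general homogeneous pair $\{\widetilde\Gamma_1,\widetilde\Gamma_2\}$ is then handled by $\Delta(\widetilde\Gamma_1)=\Delta(\Gamma)=\Delta(\widetilde\Gamma_2)$. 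Before comparing the two diagrams I must check that $\Delta(\widetilde\Gamma)$ is well defined, i.e.\ independent of the chosen $\widetilde\Gamma$-set. This is immediate from Theorem~\ref{th_conjugate}(i): any two $\widetilde\Gamma$-sets are conjugate under $W$, and every $w\in W$ is an isometry, so it preserves all inner products $(\tau_i,\tau_j)$ and hence the solid/dotted labelling. Consequently the completion procedure applied to $W$-conjugate sets yields $W$-conjugate enhanced bases carrying identical signed diagrams.

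The core of the argument is to transport the completion along the transition map. Fix a $\Gamma$-set $S$ and, using Theorem~\ref{th_conjugate}(ii) and \eqref{eq_F_and_Fmin1_2}, let $\widetilde{S}=F^{-1}S$ be the associated $\widetilde\Gamma$-set, where $F$ is a product of transition matrices $M_I(n)$ and similarities $L_{\tau_i}$. I would run the Dynkin--Minchenko completion on $\widetilde{S}$ and on $S$ in parallel and show that at each step the added extra node (the maximal root of a $D_4$-subset) on one side corresponds, under the appropriate factor of $F$, to the extra node on the other side. The two kinds of factors account for the two features of the statement: each Weyl factor preserves inner products exactly, hence preserves edge types, while each similarity $L_{\tau}\colon\tau\mapsto-\tau$ flips precisely the signs of the edges incident to $\tau$ -- this is exactly the ``up to similarities'' clause. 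For the $M_I$ factors one uses Theorem~\ref{th_invol}: $M_I$ moves a single root $\widetilde\alpha$ to the minimal element $\alpha$ of a Dynkin subset $S(\widetilde\alpha)$, and replacing a maximal generating root by the minimal root of the same subsystem is precisely the operation described in the footnote of Section~\ref{sec_DM_compl}, which produces a topologically isomorphic enhanced diagram differing only in solid/dotted edges.

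Concretely, I would argue that the set of extra nodes is an invariant of the ambient root system $\varPhi$. Since $\widetilde{S}$ and $S$ span the same subspace $L$ and, by \eqref{eq_F_and_Fmin1}, generate the same root system $\varPhi$, the Dynkin--Minchenko theorem guarantees that both completions terminate in the canonical enhanced basis $\Delta(\varPhi)$ as a subset of $\varPhi$. The node sets of $\Delta(\widetilde\Gamma)$ and $\Delta(\Gamma)$ therefore coincide, and what remains is the labelling of edges. Here I would verify, going through the sixteen pairs of \eqref{eq_pairs_trans} as in Section~\ref{sect_proof_th}, that the relation \eqref{eq_trans_matr}, namely ${}^tM_I B_{\widetilde\Gamma}M_I=B_\Gamma$, forces the inner products between extra nodes and base nodes to agree up to the sign changes recorded by the accompanying similarities $L_{\tau_i}$. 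Combining the node-set identity with this controlled sign behaviour yields $\Delta(\widetilde\Gamma)=\Delta(\Gamma)$ up to similarities.

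The main obstacle is precisely that $F$ is \emph{not} an isometry: by \eqref{eq_trans_matr} it intertwines two different partial Cartan matrices, so the notion of ``maximal root of a $D_4$-subset'' used in the completion is not preserved on the nose. One must show that the $D_4$-subsets selected during the completion of $\widetilde{S}$ map, under $F$, to admissible $D_4$-subsets for $S$, and that the extremal (maximal or minimal) root is carried to an extremal root of the image subsystem -- allowing for the max/min swap that the footnote warns about. I expect this to require the explicit description of each $M_I$ from Section~\ref{sect_proof_th} (in particular the fact that $M_I\widetilde\alpha$ is a \emph{minimal} element of a concrete Dynkin subset), so that the correspondence of extra nodes is checked case by case rather than by a single uniform isometry argument.
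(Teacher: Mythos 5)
You should first flag what ``the paper's own proof'' is here: there is none. The statement is Conjecture~\ref{conj_2}; the paper only verifies it by direct computation for $E_6(a_1)$ and $E_6(a_2)$ (exhibiting the extra nodes $m_1$, $m_2$ and their orthogonality relations) and, for the $E_7$ and $E_8$ classes, reduces it via the Vavilov--Migrin embeddings $E_7(a_i) \subset \Delta(E_7)$, $E_8(a_i) \subset \Delta(E_8)$ to checking the reverse inclusions $E_7 \subset \Delta(E_7(a_i))$ and $E_8 \subset \Delta(E_8(a_i))$. So the real question is whether your outline closes the conjecture on its own, and it does not.

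The pivotal step --- ``the Dynkin--Minchenko theorem guarantees that both completions terminate in the canonical enhanced basis $\Delta(\varPhi)$ as a subset of $\varPhi$'' --- is circular. The canonicity results of \cite{8} are established for the completion of a \emph{basis}, i.e.\ when the procedure starts from the simple roots of a Dynkin diagram. For a $\widetilde\Gamma$-set containing cycles (non-simple roots, some inner products positive), nothing in \cite{8} tells you that the completion is well defined independently of which $D_4$-subset is selected at each step, that it terminates, or that its node set coincides with the canonical one; that is essentially the content of Conjecture~\ref{conj_2} itself, so you are assuming what is to be proved. A second, more local error: your transport argument conflates two different operations. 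Each $M_I$ of Theorem~\ref{th_invol} \emph{replaces} a root $\widetilde\alpha \in \widetilde{S}$ by the minimal element of a Dynkin subset $S(\widetilde\alpha) \subset \widetilde{S}$, whereas the completion of Section~\ref{sec_DM_compl} \emph{adjoins} a new node, the maximal root of a $D_4$-subsystem of nodes already present; the footnote about maximal versus minimal roots concerns a choice made inside the completion procedure, not a basis replacement, so it cannot be cited to legitimize the swap. Finally, you concede in your last paragraph that the correspondence of $D_4$-subsets under the non-isometric map $F$, and the extremality of their images, is unverified and would require a case-by-case check over the sixteen pairs of \eqref{eq_pairs_trans} --- but that check \emph{is} the proof, so what you have is a programme rather than an argument. (Your well-definedness step is the one sound piece: Theorem~\ref{th_conjugate}(i) together with the fact that $W$ acts by isometries does show the signed diagram is independent of the chosen $\widetilde\Gamma$-set, modulo the completion's internal choices.) If you want a tractable route, the paper's own reduction --- proving the reverse inclusions $E_7 \subset \Delta(E_7(a_i))$ and $E_8 \subset \Delta(E_8(a_i))$ directly, as is done for $E_6$ in the remark following the conjecture --- avoids entirely the need to prove canonicity of the completion for non-Dynkin Carter sets.
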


\begin{remark}
Conjecture \ref{conj_2} is easily verified for Carter diagrams $E_6(a_1)$ and $E_6(a_2)$:
    \begin{itemize}
        \item Case $E_6(a_1)$. Extra nodes $m_1$ and $m_2$ are as follows:
        \begin{equation*}
          m_1 = 2\beta_1 + \alpha_1 + \alpha_2 + \alpha_3, \quad
          m_2 = 2\alpha_2 + \beta_1 + \beta_2 - \beta_3.
        \end{equation*}
        
        \begin{figure}[!h]
            \centering
            \includegraphics[scale=0.25]{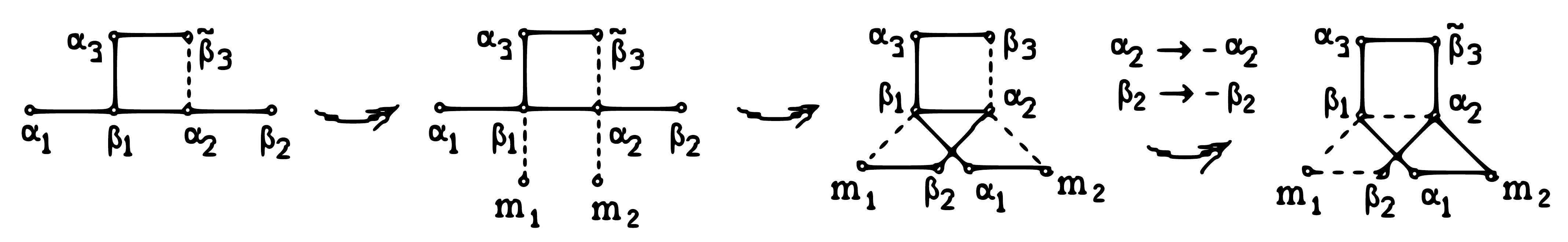}
            \caption{Enhanced Dynkin diagram $\Delta(E_6(a_1))$}
            \label{fig_enhan_E6a1}
        \end{figure}
        
        Orthogonality relations are as follows:
        \begin{equation*}
        \begin{split}
           & m_1 \perp \alpha_1, \alpha_2, \alpha_3, \widetilde\beta_3, \qquad   (m_1, \beta_2)  = (\alpha_2, \beta_2) = -1,\\
           & (m_1, m_2) = (m_1, \beta_1 + \beta_2) = 1 - 1 = 0.  \\
        \end{split}
        \end{equation*}

        \item Case $E_6(a_2)$. Here, extra nodes $m_1$ and $m_2$ are as follows:
        \begin{equation*}
          m_1 = 2\alpha_3 + \widetilde\beta_3 + \widetilde\beta_2 + \beta_1, \quad
          m_2 = 2\beta_1 + \alpha_1 + \alpha_2 + \alpha_3.
        \end{equation*}

        \begin{figure}
            \centering
            \includegraphics[scale=0.25]{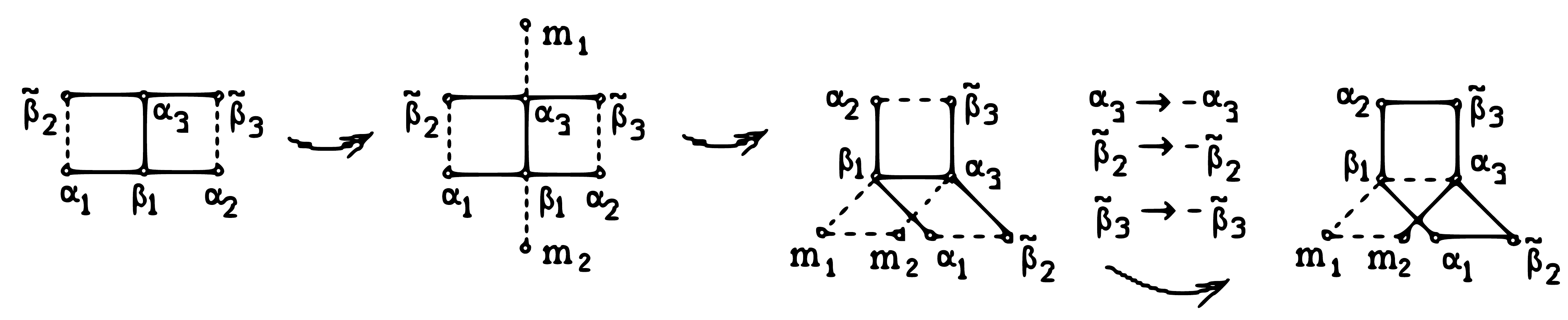}
            \caption{Enhanced Dynkin diagram $\Delta(E_6(a_2))$}
            \label{fig_enhan_E6a2}
        \end{figure}
        
        Orthogonality relations:
        \begin{equation*}
            m_1 \perp \widetilde\beta_3, \widetilde\beta_2,  \beta_1, \alpha_1,  \alpha_2, \qquad
            (m_1, m_2) = (m_1, \alpha_3) = 1.
        \end{equation*}
    \end{itemize}
\end{remark}

Further, according to Vavilov-Mingin, see \cite[Theorem 1]{19},
\begin{equation*}
 E_7(a_i) \subset \Delta(E_7)   \text{ and } E_8(a_i) \subset \Delta(E_8), \quad 1 \le i \le 8,
\end{equation*}
therefore
\begin{equation*}
   \Delta(E_7(a_i)) \subset \Delta(E_7) \text{ and } \Delta(E_8(a_i)) \subset \Delta(E_8).
\end{equation*}
Thus, to prove Conjecture \ref{conj_2}, it suffices to check only the reverse inclusions:
\begin{equation*}
   E_7 \subset \Delta(E_7(a_i)) \text{ and } E_8 \subset \Delta(E_8(a_i)).
\end{equation*}

\subsection{Adjacency, complexity and eigenvalues}
 \label{sec_eigenv}
 Let us define the \textit{complexity} of the Carter diagram as $Nc + Ke$, where
 $N$ is the number of cycles and $K$ is the number of endpoints.
 Assume that one cycle contributes to complexity as two endpoints.
 One can select another proportion.
 In Table \ref{tab_complexity},
 Carter diagrams located side by side are the pairs from the adjacency list \eqref{eq_pairs_trans}.
 The Carter diagrams from the adjacency list can be transformed to each other using
 the \textit{transition matrix} $M_I$ constructed in Theorem \ref{th_invol},
 see Section \ref{sec_prod_trans_matr}.
 Denote by $Max$-$E$ the maximal eigenvalue of a partial Cartan matrix.

\begin{table}[!h]
    \centering
    \renewcommand{\arraystretch}{1.3}
    \begin{tabular} {|c|c|c|c|c|c|c|c|c|}
        \hline
        & $E_8(a_8)$  & $E_8(a_7)$ & $E_8(a_5)$  & $E_8(a_4)$  & $E_8(a_1)$ & $E_8$ & $E_8(a_2)$ & $E_8(a_3)$ \\
        \hline
        $Nc+Ke$ & $6c$   & $3c + 1e$  & $2c + 2e$   & $2c + 1e$   & $1c + 2e$  & $3e$ & $1c + 3e$ & $1c + 4e$ \\
        \hline
        $2N+K$ & $12$   & $7$        & $6$         & $5$   & $4$  & $3$        & $5$ & $6$ \\
        \hline
        $Max$-$E$ & $3.73$      & $3.93$      & $3.956$     & $3.969$    & $3.982$    & $3.989$ & $3.975$ & $3.93$ \\
        \hline
    \end{tabular}
    \caption{ Two chains arranged in ascending order of $Max$-$E$. }
    \label{tab_complexity}
\end{table}

In Table \ref{tab_complexity}, there are two chains which are arranged in ascending
order of the \textit{maximal eigenvalues} of the partial Cartan matrices
and in descending order of the \textit{complexity} parameter, see \eqref{eq_2_ascend_chains}.
\begin{equation}
  \label{eq_2_ascend_chains}
\begin{split}
 & E_8(a_8) \xrightarrow{\hspace{0.4cm}}
 E_8(a_7) \xrightarrow{\hspace{0.4cm}} E_8(a_5) \xrightarrow{\hspace{0.4cm}}
 E_8(a_4) \xrightarrow{\hspace{0.4cm}} E_8(a_1) \xrightarrow{\hspace{0.4cm}} E_8, \\
 & E_8(a_3) \xrightarrow{\hspace{0.4cm}} E_8(a_2) \xrightarrow{\hspace{0.4cm}} E_8. \\
 \end{split}
\end{equation}
The arrows in \eqref{eq_2_ascend_chains} point in the direction of increasing of maximal eigenvalue.
It is not so clear the place of $E_8(a_6)$ in the homogeneous class $\{E, 8\}$:

\begin{itemize}
\item  The complexity parameter for the diagram $E_8(a_6)$ is equal to $3c$.
\item  By alternative transition matrices from Section \ref{sec_altern}, the possible adjacent diagram for
$E_8(a_6)$ can be $E_8(a_5)$, $E_8(a_4)$, $E_8(a_1)$.
\item  The maximum eigenvalue of the partial Cartan matrix for $E_8(a_6)$ is $3.902$.
\end{itemize}

Based on these three factors, $E_8(a_6)$
can be placed, for example, between $E_8(a_5)$ and $E_8(a_4)$, or in the separated pair $\{ E_8(a_5), E_8(a_6) \}$.

\subsection{C.~M.~Ringel: Invariants with value $2$, $4$, $8$}
  \label{sec_Ringel}

In the survey article \cite{13}, C.~M.~Ringel provided several notes regarding representations of
Dynkin quivers. As Ringel writes: {\it``they shed some new light on properties of Dynkin and
Euclidean quivers''}. The following is Ringel's $2-4-8$ assertion regarding
Dynkin quivers $E_n$, $n = 6,7,8$.

Let $\overline\Gamma$ be the extended Dynkin diagram (=Euclidean quiver)
for the Dynkin diagram $\Gamma$. If $\overline\Gamma$ is constructed from $\Gamma$
by adding the new edge to the vertex $y$ of $\Gamma$, then $y$ is said to be the exceptional vertex.
For the Dynkin quivers $E_n$, $n = 6,7,8$, let $x$ be the neighbor of $y$ and $\Gamma' = \Gamma\backslash\{y\}$,
see Table \ref{tab_invariants}.

\begin{table}
    \centering
    \begin{tabular} {||c||c|c|c||}
        \hline
        $\Gamma$    & $E_6$  & $E_7$  & $E_8$ \\
        \hline
        $\Gamma'$   & $A_5$  & $D_6$  & $E_7$  \\
        \hline
        $n$         & $2$    &  $4$   & $8$ \\
        \hline
    \end{tabular}
    \caption{\hspace{1mm} Ringel $2-4-8$ assertion}
    \label{tab_invariants}
\end{table}

Let $P'(x)$ (resp. $I'(x)$) be the indecomposable projective (resp. injective) representation of $\Gamma'$ corresponding to vertex $x$. Let $\tau'$ be the Auslander-Reiten translation in the category of finite-dimensional representations
rep($\Gamma$). Then, $P'(x)$ and $I'(x)$ belong to the same $\tau'$-orbit and there is the following
$2-4-8$ assertion:
\begin{equation*}
   P'(x) = (\tau')^n I'(x),
\end{equation*}
where $n$ is the \textit{Ringel invariant} given in the Table \ref{tab_invariants}, see \cite[Part 3]{13}.
For example, for the Auslander-Reiten $A_5$, $n = 2$, see  Fig. \ref{AR_quiver_A5}, and for the Auslander-Reiten $D_6$, $n = 4$, see Fig.~\ref{AR_quiver_D6}.

\begin{figure}
    \centering
    \includegraphics[width=.66\textwidth]{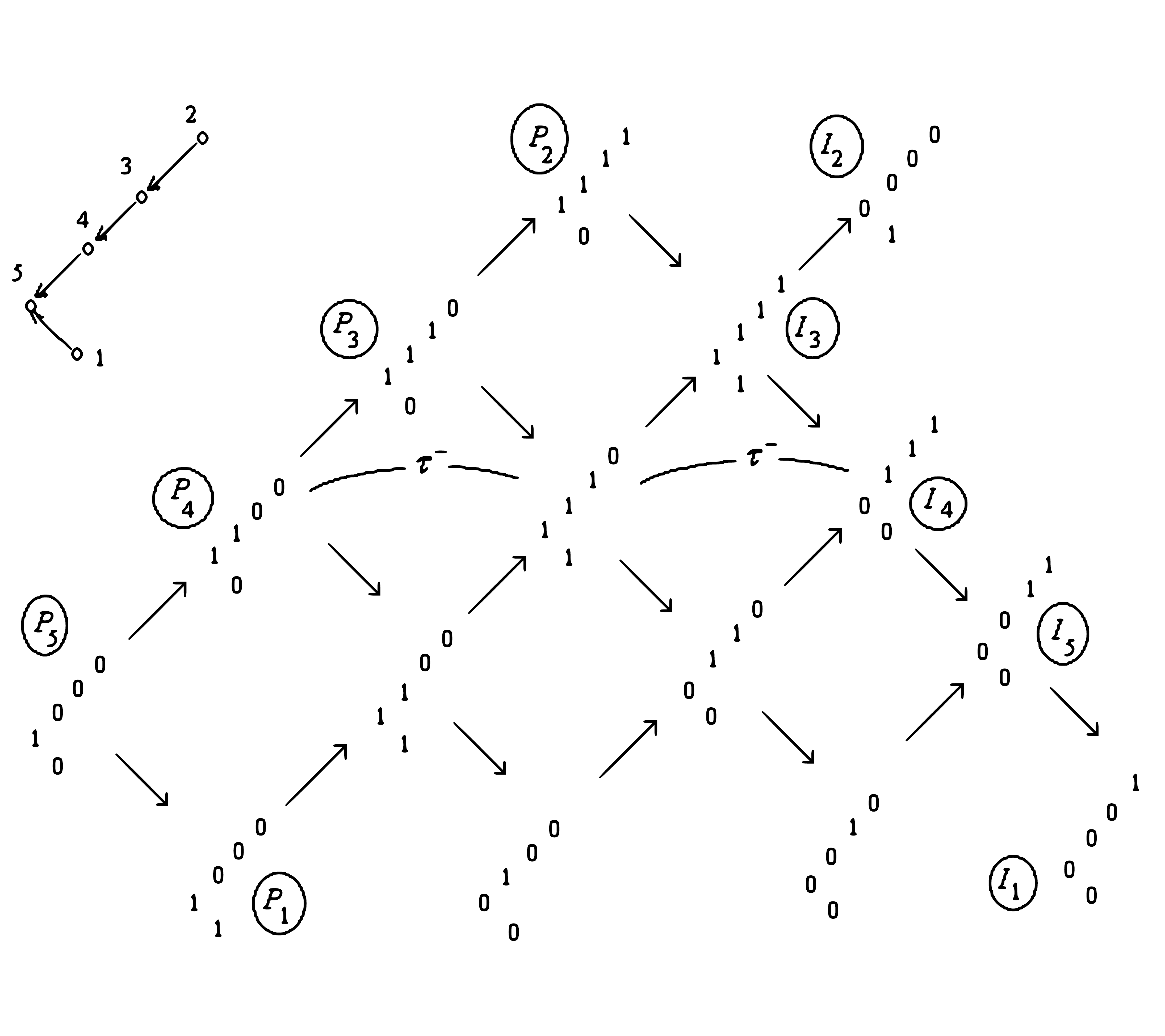}
    \caption{Auslander-Reiten quiver $A_5$: $\tau^- = s_5s_4s_3s_2s_1$, $\tau^+ = s_1s_2s_3s_4s_5$, $\tau^{-2} P_4 = I_4$.}
    \label{AR_quiver_A5}
\end{figure}

\begin{figure}
    \centering
    \includegraphics[width=\textwidth]{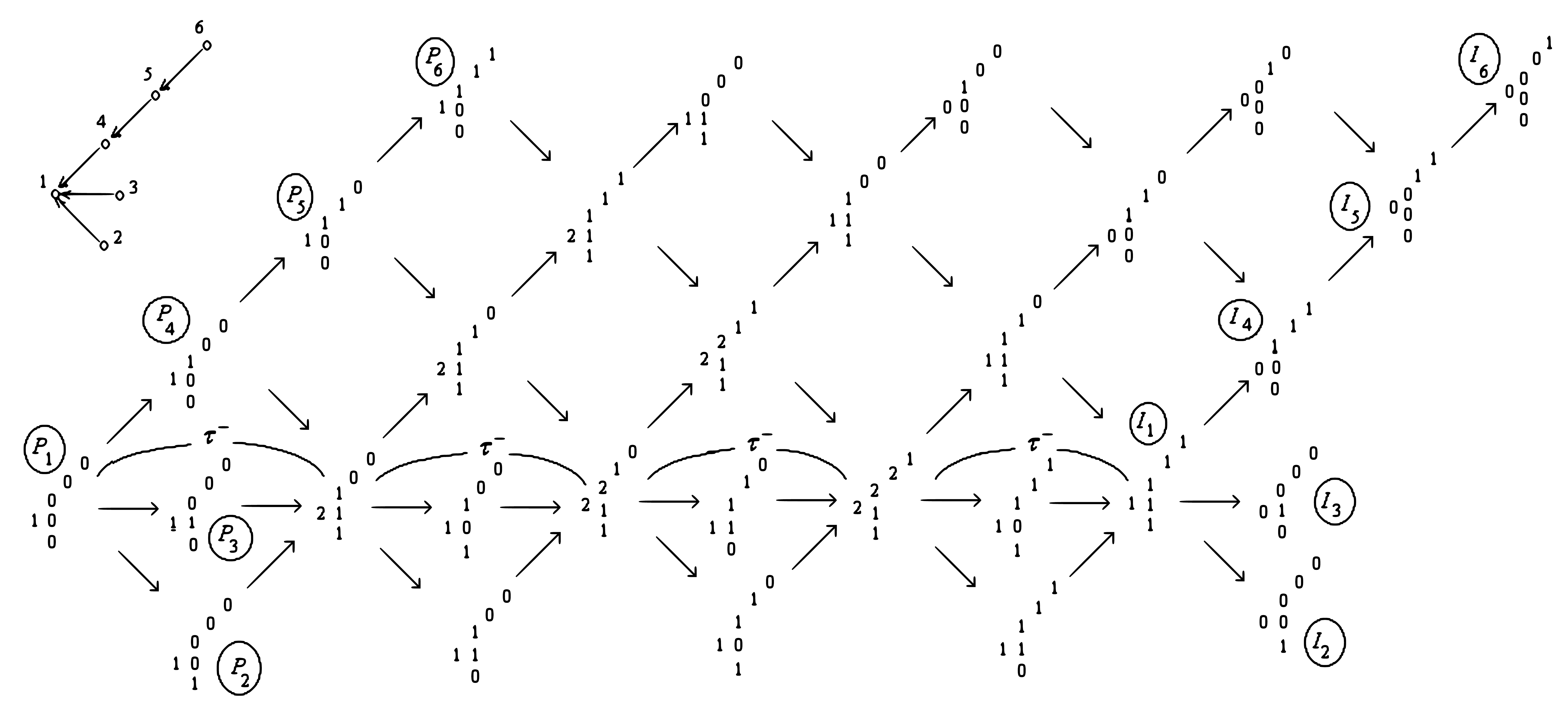}
    \caption{Auslander-Reiten quiver $D_6$: $\tau^- = s_1s_2s_3s_4s_5s_6$, $\tau^+ = s_6s_5s_4s_3s_2s_1$. For each $i$, we have $\tau^{-4} P_i = I_i$.}
    \label{AR_quiver_D6}
\end{figure}

\subsection{B.~Rosenfeld: Isometry groups of the projective planes}
  \label{sec_Baez}

J.~Baez in \cite{1} points out another connection between the invariants $2$, $4$, $8$
and the diagrams $E_6$, $E_7$ and $E_8$. This connection was discovered by Rosenfeld, see \cite{14}:
\textit{``$\dots$ Boris Rosenfeld had the remarkable idea $\dots$ that
the exceptional Lie groups $E_6$, $E_7$ and $E_8$ may be
considered as the isometry groups of the projective planes over the following $3$
algebras, respectively:''}

\begin{itemize}
 \item the bioctonions $\mathbb{C} \otimes \mathbb{O}$,
 \item the quateroctonions $\mathbb{H} \otimes \mathbb{O}$,
 \item the octoctonions $\mathbb{O} \otimes \mathbb{O}$.
\end{itemize}
 Any real finite-dimensional division algebra over the reals must be only one of these:
 $\mathbb{R}$, $\mathbb{C}$, $\mathbb{H}$, $\mathbb{O}$\footnotemark[1].
 The real numbers $\mathbb{R}$,  complex numbers $\mathbb{C}$, the quaternions $\mathbb{H}$, and the octonions $\mathbb{O}$
 are division algebras of dimensions, respectively: $1$, $2$, $4$, or $8$.
\footnotetext[1] {This theorem is due to Kervaire and Bott-Milnor, 1958.}

\end{appendix}

{\small

}

\EditInfo{July 5, 2022}{June 6, 2023}{Dimitry Leites and Sofiane Bouarroudj}

\end{document}